\newcolumntype{C}[1]{>{\centering}m{#1}} 
\newcommand{\GZ}{{\mathbb Z}}
\newcommand{\GR}{{\mathbb R}}
\newcommand{\GN}{{\mathbb N}}
\newcommand{\balpha}{\boldsymbol{\alpha}}
\newcommand{\bl}{\boldsymbol{\ell}}
\newcommand{\bp}{\mathbf{p}}
\newcommand{\bM}{\mathbf{M}}
\newcommand{\bx}{\mathbf{x}}
\newcommand{\be}{\mathbf{e}}
\newtheorem{lemma}{Lemma}[section]
\newtheorem{theorem}[lemma]{Theorem}
\newcommand{\thistheoremname}{}
\newtheorem*{genericthm}{\thistheoremname}
\theoremstyle{definition}
\theoremstyle{remark}
\newtheorem{definition}[lemma]{Definition}
\numberwithin{equation}{section}
\definecolor{americanrose}{rgb}{1.0, 0.01, 0.24}
\definecolor{awesome}{rgb}{1.0, 0.13, 0.32}
 \definecolor{ballblue}{rgb}{0.13, 0.67, 0.8}
 \definecolor{amber}{rgb}{1.0, 0.75, 0.0}
 \definecolor{pastelgreen}{rgb}{0.01, 0.75, 0.24}
 \definecolor{bananamania}{rgb}{0.98, 0.91, 0.71}
 \definecolor{jazzberryjam}{rgb}{0.65, 0.04, 0.37}
\definecolor{lava}{rgb}{0.81, 0.06, 0.13}
\definecolor{lavenderpurple}{rgb}{0.59, 0.48, 0.71}
\definecolor{lighttaupe}{rgb}{0.7, 0.55, 0.43}
\definecolor{pinegreen}{rgb}{0.0, 0.47, 0.44}
\definecolor{spirodiscoball}{rgb}{0.06, 0.75, 0.99}
\definecolor{tangelo}{rgb}{0.98, 0.3, 0.0}
\definecolor{turquoise}{rgb}{0.19, 0.84, 0.78}
\definecolor{teagreen}{rgb}{0.82, 0.94, 0.75}
\definecolor{powderblue(web)}{rgb}{0.69, 0.88, 0.9}
\definecolor{mauvelous}{rgb}{0.94, 0.6, 0.67}
\definecolor{deepsaffron}{rgb}{1.0, 0.6, 0.2}
\definecolor{darkgoldenrod}{rgb}{0.72, 0.53, 0.04}
\definecolor{bleudefrance}{rgb}{0.19, 0.55, 0.91}
\definecolor{caribbeangreen}{rgb}{0.0, 0.8, 0.6}
\definecolor{canaryyellow}{rgb}{1.0, 0.94, 0.0}
\newcommand{\R}{\mathbb{R}}
\newcommand{\tra}[1]{\vphantom{#1}^t #1}
\newcommand{\hide}[1]{{}}
\lstdefinelanguage{Sage}[]{Python}
{morekeywords={False,sage,True},sensitive=true}
\definecolor{dblackcolor}{rgb}{0.0,0.0,0.0}
\definecolor{dbluecolor}{rgb}{0.01,0.02,0.7}
\definecolor{dgreencolor}{rgb}{0.2,0.4,0.0}
\definecolor{dgraycolor}{rgb}{0.30,0.3,0.30}
\title[Rational approximations and  multidimensional continued fractions]{Rational approximations, multidimensional continued fractions and lattice reduction}
\author{V. Berth\'e} 
\address{Universit\'e  Paris Cit\'e, CNRS, IRIF, F-75006 Paris, France}
\email{berthe@irif.fr}
\author{K. Dajani}
\address{Department of Mathematics, Utrecht University, P.O. Box 80010, 3508TA Utrecht, The Netherlands}
\email{k.dajani1@uu.nl }
\author{C. Kalle}
\address{Mathematisch Instituut, Leiden University, Niels Bohrweg 1, 2333CA Leiden, The Netherlands}
\email{kallecccj@math.leidenuniv.nl }
\author{E. Krawczyk}
\address{Faculty of Mathematics and Computer Science, Institute of Mathematics,
Jagiellonian University, Stanis\l{}awa \L{}ojasiewicza 6, 30-348 Krak\'{o}w, Poland} 
\email{ela.krawczyk7@gmail.com}
\author{H. Kuru}
\address{
Faculty of Engineering and Natural Sciences, Sabancı University,
Tuzla, Istanbul 34956 Turkey }
\email{hamidekuru@sabanciuniv.edu}
\author{A. Thevis}
\address{Institut f\"ur Mathematik, Goethe-Universit\"at Frankfurt, Robert-Mayer-Str. 6-8, 60325 Frankfurt am Main, Germany}
\email{thevis@math.uni-frankfurt.de}
\date{\today}
\thanks{This work was supported by the Agence Nationale de la Recherche through the project CODYS (ANR-18-CE40-0007).}
\keywords{multidimensional continued fraction algorithms; lattice reduction algorithms; nearest integer Jacobi--Perron algorithm}
\begin{document}

\begin{abstract}
 We first  survey  the current state of the art concerning  the  dynamical properties of  multidimensional continued fraction algorithms  defined dynamically as  piecewise fractional  maps and  compare them   with  algorithms  based on 
 lattice  reduction.  We discuss  their  convergence properties and the quality of the rational approximation,  and stress the interest  for these algorithms to  be  obtained by iterating dynamical systems. 
   We then focus on  an algorithm based on the  classical  Jacobi--Perron
   algorithm involving the nearest integer part. We  describe its Markov properties and  we  suggest a possible procedure for proving the existence of a finite ergodic invariant measure absolutely continuous with respect to Lebesgue measure.
\end{abstract}
\maketitle


\tableofcontents

\section{Introduction}
Continued fraction type expansions  aim  (among other  properties) at providing increasingly good rational Diophantine approximations of real numbers. More precisely, a  multidimensional continued fraction is expected to produce  simultaneous better and better rational approximations with the same denominator  $\bp^{(n)}/q^{(n)}=(p_1^{(n)}/q^{(n)},\dots p_d^{(n)}/q^{(n)})_{n  \in {\mathbb N}}$ for  $d$-tuples $\balpha=(\alpha_1,\dots,\alpha_d)$ of real numbers,  with   the fractions $p_i^{(n)}/q^{(n)}$ converging to  $\alpha_i$ for  each $1\le i \le d$. 

The usual regular continued fractions  are known to provide   extremely good (and even   the best)  rational approximations   for positive real numbers \cite{Khintchine,Cassels}. The situation is more complicated in higher dimension. Indeed, there is no canonical extension of regular continued fractions to higher dimensions (see Section \ref{subsec:noncan}), and the zoology of existing algorithms is particularly rich (see Section \ref{subsec:zoo} as an illustration). 
The main advantage of most classical (unimodular) continued fractions is that they can be expressed as dynamical systems  whose  ergodic study has  already been well  understood (such as described in  Section \ref{subsec:dyn}). 
Ergodic theory allows  a precise description of the  long-range statistical properties of the expansions  that are produced e.g.\  their mean behavior. Indeed, ergodic theory  extends   basic   laws of large numbers in probability by dropping the assumption of intertemporal independence.
Thus,
it relates spatial averages $\int_X f\,  d\mu$    to time averages $\frac{1}{n}\sum_{0\leq i< n}f\circ T^{i}$
along trajectories, and the system   has the  same behavior when  averaged over time as averaged over  the whole  space. 
  
  However, the  main disadvantage  of these algorithms relies,   firstly,  in the fact that the behavior of the continued fraction  expansion    of a given $d$-tuple $\balpha$    can be difficult to grasp (it might not behave   in a  generic  way), and secondly, in the quality of  the rational approximations  that are produced. 
Indeed, the convergence 
of  multidimensional continued fractions is  governed by their (first and second)   Lyapounov exponents (see \cite{Lagarias:93}), which  describe the asymptotic behavior of the singular values of large products of matrices, under the ergodic hypothesis. More precisely, their  approximation  exponent     can be expressed as   $1- \frac{\lambda_2}{\lambda_1}$  according to \cite{Lagarias:93} ($\lambda_1$ 
and $\lambda_2$  being the two largest Lyapunov exponents of the associated  dynamical system). It   has 
to be  compared  with   Dirichlet's exponent $1+1/d$ (see Theorem \ref{thm:Dirichlet} in Section \ref{subsec:Dirichlet}). However, there is numerical evidence \cite{berth2021second} that the second Lyapunov exponent is not even negative in higher dimensions for  most classical algorithms such as the Jacobi--Perron \cite{Bernstein:71,Perron:07,Schweiger:73}, Brun  \cite{Brun19,Brun20,BRUN} or Selmer \cite{Selmer:61} algorithms, which prevents strong convergence of these algorithms.   In a nutshell, strong  (resp.  weak) convergence refers to  the convergence of quantities of the type  $ |||q^{(n)} \balpha|||$ (resp., 
 $ \vert\vert \alpha -\bp^{(n)}/q^{(n)}\vert \vert$). These algorithms converge weakly usually, but  they  fail to have strong convergence  (see Section \ref{subsec:conv} for the definitions of weak and strong convergence).

In terms of  the quality of rational approximations that are produced, there is  a second strategy which relies on lattice reduction, where 
rational approximations are  obtained by  exhibiting  short vectors in a  lattice attached to some  given $d$-tuple $\balpha$.  Lattice reduction  algorithms   aim to   find reduced basis of  Euclidean lattices, formed by short and almost orthogonal vectors.  The  most celebrated one is the LLL algorithm, designed by  Lenstra, Lenstra and Lov\'asz  in 1982 \cite{LLL:82}. It relies heavily on the use of Gram-Schmidt  orthogonalization.  
Its overall algorithmic
structure    is simple and yet,   its general probabilistic behavior is far from being  understood; this includes  the   gap between its    practical  performances  and 
its proved   worst-case  estimates.  Hence, although  finding rational approximations  work quite well in practice, the average behavior  of  such a strategy 
is not well understood. In particular, the lack of a description of reduction algorithms as dynamical systems prevents the use of tools from ergodic theory.

In the expository part of the  paper, we focus  on the  two  main   classes   of algorithms that produce rational approximations as discussed above.  
 The first type of algorithms can be expressed via dynamical systems  defined  on    a compact set (usually  of the form $[0,1]^d$).  Such  an algorithm 
associates with some  given vector  an infinite sequence of matrices, and one can consider the quality of convergence  of this product of matrices. The most classical examples of such algorithms are the  Jacobi--Perron \cite{Bernstein:71,Heine1868,Perron:07,Schweiger:73},  the Brun  \cite{Brun19,Brun20,BRUN}, or   the  Selmer algorithms (the last of which is  conjugate on the absorbing simplex to M\"onkemeyer’s algorithm  \cite{Mon:54, Panti:08}).  They are described e.g. in \cite{BRENTJES,SCHWEIGER,Lagarias:93}.
 The second type of algorithm is based on  lattice reduction algorithms, such as  the LLL algorithm (see Section \ref{sec:LLL}).  We focus  on these two families since they share as a common feature the fact  that they rely on a basis of the integer  lattice ${\mathbb Z}^{d}$.
 
 We then illustrate  in Section \ref{sec:ni} the  dynamical approach with the ergodic study of a version of the Jacobi--Perron algorithm based on the use  of the nearest integer part. One motivation for studying the nearest integer Jacobi--Perron  algorithm is to confirm the  idea  that  working with the nearest integer part improves the  quality
 of  continued fraction  algorithms, such as   indicated numerically by the  experimental results from  Steiner \cite{Steiner:pc} (see Section \ref{subsec:expnijpa}). The  partial quotients produced by the  usual Jacobi--Perron
algorithm  satisfy a simple Markovian rule. In the case of the nearest integer Jacobi--Perron algorithm the description of  the admissible sequences of digits is  much more involved. Hence, a simple modification -- such as changing the choice of the integer part -- leads to much more delicate
conditions for the description of the algorithm.
As a first step toward a theoretical confirmation of   the above-mentioned estimates, we prove the existence  of a Markov partition for the nearest integer Jacobi--Perron algorithm and suggest a possible procedure for proving the existence of a finite ergodic  invariant measure absolutely continuous with respect to Lebesgue measure.


\bigskip

Let us sketch the contents of this paper.    Section \ref{sec:rcf} recalls basic 
notions concerning classical continued fractions. We present their main properties  that we will use as a guideline for  possible generalizations to higher-dimensional case. We  then focus in Section \ref{sec:sa} on the two main strategies that
can be used for  producing rational approximations in an effective way. Section \ref{sec:cf} focuses on the classical dynamical  unimodular continued fraction algorithms. Algorithms based on the
lattice reduction algorithms  and homogeneous dynamics are considered  in Section \ref{sec:LLL}. 
We briefly  discuss  applications and   possible ways to improve   algorithms in Section  \ref{sec:applications}. Lastly, in Section \ref{sec:ni} we focus on the nearest integer Jacobi--Perron algorithm. We describe its associated Markov partition and provide a strategy for proving the existence of an absolutely continuous invariant measure.


\bigskip

{\bf Acknowledgements} We are  greatly  indebted to Wolfgang Steiner for  
the numerical data he provided us in Section \ref{subsec:expnijpa}. We would like to thank Women in Numbers Europe-4 (WINE4) for bringing us together and for giving us the opportunity to work on this project.

\section{Continued fractions} \label{sec:rcf}
 In this section we briefly recall   the main properties of the usual regular continued fractions.
 They will serve us a  guideline   for  the discussion on 
  the higher-dimensional case.
For general references on continued fractions, see e.g.  \cite{Bill:78,DaKr02,HW,Kraai02,Khintchine}.
For any positive  real number $\alpha \in [0,1]$,  its 
continued fraction expansion  is 

$$\alpha=
\cfrac{1}{a_1+
  \cfrac{1}{a_2+
    \cfrac{1}{a_3+\ddots}
}},$$
where  the   digits  $a_n$  are positive integers, called {\em partial quotients}. 
The rational  numbers  $p_n/q_n$, where  $p_n$, $q_n$  are coprime positive integers  defined as
    \[ \frac{p_n}{q_n}= \cfrac{1}{a_1 +\cfrac{1}{a_2 + \ddots + \cfrac1{a_n}
    }} ,\]
    are called   {\em convergents}. 
The  sequence of rational numbers $p_n /q_n $ approximates $\alpha$  up to an
error of order  $1/q_n^2 $:  one  has $$|\alpha - p_n/q_n| \leq  \frac{1}{q_n^2} \, \,  \mbox{ for all } n.$$

Dynamically, continued fraction expansions   can be   obtained  by applying  the Gauss map  $T _G\ :  [0,1] \rightarrow [0,1]$ defined by
$$ T_G(0)=0\quad \text{and}\quad
T_G(\alpha)= \{1/\alpha\}  \mbox{ if } \alpha \neq 0,$$ 
where $\{ \cdot \}$ is the fractional part of a real number.  If for an $\alpha \in (0,1]$ we write
$T_G(\alpha)= \{1/\alpha\}=\frac{1}{\alpha} -\lfloor \frac{1}{\alpha} \rfloor= \frac{1}{\alpha} - a_1$, then 
$\alpha= \frac{1}{a_1+ T_G(\alpha)}$. Now, by  setting
$a_n=\lfloor  \frac{1}{T^{n-1}_G(\alpha)} \rfloor$  for $ n \geq 1$,
one gets   the digits  in the the continued fraction expansion of $\alpha$.



Note that the Gauss map is closely related to {Euclid's algorithm}:
starting with two (coprime)  positive integers   $\ell^{(0)}$ and $\ell^{(1)}$
Euclid's  algorithm   works by  subtracting  as often as  possible
 the smallest  of both numbers
  from  the largest one (that is, one performs   the     Euclidean division of  the largest one  by the smallest);  this yields
$\ell^{(0)}=\ell^{(1)}\lfloor  \frac{\ell^{(0)}}{\ell^{(1)}} \rfloor+\ell^{(2)}$, 
$\ell^{(1)}=\ell^{(2)}\lfloor \frac{\ell^{(1)}}{\ell^{(2)}}\rfloor+\ell^{(3)}$, {etc.},  until we reach 
$\ell^{(m+1)}=1=\mbox{gcd} (\ell^{(0)},\ell^{(1)})$.
By setting,  for $n \in \mathbb{N}$, $\alpha^{(n)}=\frac{\ell^{(n)}}{\ell^{(n+1)}}$  and  $a_n=\lfloor \alpha^{(n)} \rfloor$, one gets 
$\alpha^{(n-1)}=a_{n-1} + \frac{1}{\alpha^{(n)}}, $
and 
$$
\alpha^{(0)} =  \frac{\ell^{(0)}}{\ell^{(1)}}=a_0 +
\cfrac{1}{a_1+
  \cfrac{1}{a_2+
    \cfrac{1}{a_3+\ddots+\frac{1}{a_{m-1}+\frac{1}{a_m}}
}}}.$$

  Let us now revisit the action of the Gauss map  in  matricial terms.  Let $\alpha \in [0,1].$
For all $n$, we have
\begin{align*}
\left[\begin{array}{l}
\alpha\\
1
\end{array}\right] &=\alpha T_G(\alpha) \cdots T^{n-1}_G(\alpha)  \left [\begin{array}{ll}
0 & 1\\
1 & a_1 
\end{array}\right] \cdots \left [\begin{array}{ll}
0 & 1\\
1 & a_n 
\end{array}\right]   \begin{bmatrix}
T^{n}_G(\alpha)\\
1
\end{bmatrix} \\
& =  \alpha T_G(\alpha) \cdots T^{n-1}_G(\alpha)  \left [\begin{array}{ll}
p_{n-1} & p_{n}\\
q_{n-1}& q_{ n}
\end{array}\right]  \begin{bmatrix}
T^{n}_G(\alpha)\\
1
\end{bmatrix},
\end{align*}
by   using the classical relations between convergents and partial quotients, namely 
 $q_{-1}=0$, $p_{-1}=1$, $q_0=1$, $p_0=0$, and, for all $n$,
$$
q_{n +1}=a_{n +1} q_n +q_{n-1}, \  p_{n +1}=a_{n +1} p_n +p_{n-1}.$$
The matrix $\begin{bmatrix}
   p_{n-1} & p_n \\
   q_{n-1} & q_n
\end{bmatrix}$ 
is a square  matrix  with  integer entries  that has   determinant   of absolute value $1$, which is to say that it is {\em unimodular}. We denote the set of unimodular matrices by $\mathrm{GL}(d,{\mathbb Z})$. (We also use  the following standard notations: $\mathrm{GL}(d,{\mathbb R})$ stands for the  set of $d\times d$ invertible matrices with real entries, 
 $SL(d,\GN)$  stands for 
the set of $d\times d$ matrices of determinant $1$
with non-negative  integer coefficients.) Note that the entries of this matrix   are even  positive. To understand the convergence of such a sequence of matrices one can use  generalizations of the Perron--Frobenius theorem,  such as  Theorem \ref{thm:furs} below.


Dynamically, the  Gauss map $T_G$  goes with  the map 
\begin{equation} \label{eq:gcocyle}
A_G : [0,1] \rightarrow  \mathrm{GL}(2, {\mathbb N}),  \ 
\alpha \mapsto \begin{bmatrix}
 0&  1\\
 1  & \lfloor 1/\alpha \rfloor
 \end{bmatrix}.
 \end{equation}
 Such a  map is called  a \emph{cocycle} in  the  terminology of (random) dynamical systems  (see e.g. \cite{Arnold95,Arnold98,Viana:book}).
 Let us set $A^{(n)} :=A(T_G ^{n}) (\alpha)$ for all positive  $n$.
The line directed by  the vector $(\alpha,1)$
in ${\mathbb R}^2$  belongs to the    sequence of nested cones
$A^{(1)} \cdots  A^{(n)} {\mathbb R}^2_+,$  i.e., 
$$(\alpha,1) \in \bigcap _n A^{(1)} \cdots  A^{(n)} {\mathbb R}^2_+= \bigcap _n  A(T_G)( \alpha)  \cdots A(T_G^n)(  \alpha)  {\mathbb R}^2_+ . $$
One has even more: this  sequence of nested cones converges to the line  directed by $(\alpha,1)$. 
The {\em convergence} is said to be {\em weak} if    the angles  between the (column) vectors  of the product matrices $A^{(1)} \cdots A^{(n)}$  tend to $0$ (as $n\to\infty$),  and {\em strong} if the distances between the  vectors tend to $0$.
Continued fractions can thus be seen as  producing dynamically  infinite convergent  sequences of unimodular  matrices. 
We revisit the notions of convergence of  infinite products of matrices  in Section \ref{subsec:conv} in more detail.


\section{On  simultaneous  approximation}\label{sec:sa}
This section is  devoted to simultaneous  rational approximations. In Section \ref{subsec:Dirichlet} we first recall Dirichlet's theorem. We then  describe  in Section \ref{subsec:approx} the two main strategies  for producing such approximations that we consider in this survey.  We focus on   the quality  of approximations in Section \ref{subsec:conv}.


\subsection{Dirichlet's bound}\label{subsec:Dirichlet}
Let us recall Dirichlet's theorem; it can be 
obtained   as a direct application of the pigeonhole principle (see  e.g.  \cite{HW})  or of  Minkowski's first  theorem. 
 \begin{theorem}[Dirichlet's theorem]\label{thm:Dirichlet}
 For any $(\alpha_1, \cdots,\alpha_d)\in \mathbb{R}^d$ and any $Q$, there  exists a positive  integer $q$ with 
$q \leq Q^d$  and integers $p_i$ such that
$$\max_{ 1\leq i \leq d}|q \alpha_i - p_i| < \frac{1} {Q}.$$ 
\end{theorem}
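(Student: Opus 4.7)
The plan is to establish the theorem via the pigeonhole principle, which is the most direct route. Assume for concreteness that $Q$ is a positive integer (the real case follows by replacing $Q$ with $\lfloor Q\rfloor$, which only loosens the bound $q\leq Q^d$). The key idea is to force a small simultaneous approximation by producing two multiples of $\balpha$ whose fractional-part vectors land close together in $[0,1)^d$.

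First I would introduce the collection of $Q^d+1$ points
\[
P_q := (\{q\alpha_1\}, \ldots, \{q\alpha_d\}) \in [0,1)^d, \qquad q = 0, 1, \ldots, Q^d,
\]
and partition $[0,1)^d$ into the $Q^d$ half-open axis-aligned sub-cubes $\prod_{i=1}^d [k_i/Q, (k_i+1)/Q)$ indexed by $0 \leq k_i < Q$. Since there are strictly more points than sub-cubes, two of them, say $P_{q_1}$ and $P_{q_2}$ with $0 \leq q_1 < q_2 \leq Q^d$, must lie in a common sub-cube. Because the sub-cubes are half-open of side length $1/Q$, this gives $|\{q_2\alpha_i\} - \{q_1\alpha_i\}| < 1/Q$ for each $1\leq i\leq d$.

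Setting $q := q_2 - q_1 \in \{1,\ldots,Q^d\}$ and $p_i := \lfloor q_2\alpha_i\rfloor - \lfloor q_1\alpha_i\rfloor \in \mathbb{Z}$, the identity
\[
q\alpha_i \;=\; \bigl(\{q_2\alpha_i\} - \{q_1\alpha_i\}\bigr) \;+\; \bigl(\lfloor q_2\alpha_i\rfloor - \lfloor q_1\alpha_i\rfloor\bigr)
\]
immediately yields $|q\alpha_i - p_i| < 1/Q$ for every $i$, which is the desired conclusion.

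The only genuinely delicate point in the argument is securing the \emph{strict} inequality, which is exactly why I would use half-open (rather than closed) sub-cubes: two distinct points in the same half-open sub-cube of side $1/Q$ differ by strictly less than $1/Q$ in every coordinate. As an alternative, one could apply Minkowski's first theorem to the closed symmetric convex body
\[
K = \bigl\{(x_0, x_1, \ldots, x_d) \in \mathbb{R}^{d+1} : |x_0| \leq Q^d, \ |\alpha_i x_0 - x_i| \leq 1/Q \text{ for } 1\leq i\leq d\bigr\},
\]
whose volume equals $2^{d+1}$ and therefore contains a nonzero integer lattice point $(q, p_1,\ldots, p_d)$; here one would need extra care to rule out $q = 0$ and to upgrade the weak inequality to a strict one, so the pigeonhole route is cleaner.
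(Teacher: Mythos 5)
Your pigeonhole argument is correct and is exactly the route the paper indicates: the paper offers no written proof of its own, noting only that the theorem can be obtained ``as a direct application of the pigeonhole principle \ldots or of Minkowski's first theorem'', and these are precisely your main argument and your stated alternative. The one slip is the opening parenthetical: reducing real $Q$ to the integer $\lfloor Q\rfloor$ does \emph{not} recover the statement, because the integer case then yields only $\max_i |q\alpha_i - p_i| < 1/\lfloor Q\rfloor$, and $1/\lfloor Q\rfloor \geq 1/Q$, so it is the approximation bound (not, as you say, the bound on $q$) that gets loosened, and in the wrong direction. This is immaterial to how the theorem is used in the paper --- the corollary on infinitely many solutions with exponent $1+1/d$ only needs $Q$ to run through the positive integers --- but if you want the statement for arbitrary real $Q>1$ you need a genuinely different argument, not this reduction.
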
 
Theorem \ref{thm:Dirichlet}  immediately implies that for any   $(\alpha_1, \cdots,\alpha_d)\in \mathbb{R}^d$  the system of inequalities 
$$\left|\frac{p_i}{q}-\alpha_i \right|< \frac{1}{q^{1+\frac{1}{d}}},\ \mbox{ for }
i=1,2,\ldots ,d,$$
admits infinitely many  integer solutions $(p_1,\dots, p_d, q)$.


The exponent $1+1/d$ is optimal as shown in \cite{Perron:1921}, see also \cite{Cassels,Schmidt:96}.
The Dirichlet's theorem    provides   the existence of ``good'' approximations.    One can thus  make an exhaustive search, though  this is not
an  efficient algorithmic method.  See \cite[Chapter 6]{LLLbook}  for a discussion on effective methods. See also   \cite{Lagarias:85}, where the computational complexity  concerning   simultaneous  Diophantine approximations is investigated. When the dimension  $d$ is fixed, \cite{Lagarias:85} gives algorithms  which,  for a  given $N$, find a  good  rational approximation with denominator $ 1 \leq q \leq N$  with respect to a specified accuracy, 
   or  which find all best approximations  in $[1,\cdots, N]$  in   polynomial-time (using methods  based on 
  the LLL algorithm).   
  Note that   the  following problem is proved to be NP-hard:
for a given  vector $\alpha \in \mathbb{Q}^d$,  positive   integer $N$ and   accuracy  $s_1/s_2$,
  is there an integer $Q$   with $1 \leq Q \leq N$ such that
  $|||Q \alpha |||  \leq s_1/s_2$?  (The distance to the nearest integer is expressed here with respect to the supremum norm.)  In a similar flavor, see also \cite{HJLS:89}   concerning   the problem of finding integer  relations, and
 \cite{BJM:88}.


Continued fractions   are known to   provide   good (and even the best) rational  approximations of  a   given real number $\alpha$  in $[0,1]$ (see e.g. \cite{Cassels,Khintchine}).
One would desire to have similar algorithms  yielding  good rational approximations  with the same denominator of   $d$-tuples of   positive real numbers. That is, for a given  $\balpha=(\alpha_1,\cdots, \alpha_d) \in [0,1]^d$,   one   looks  for  sequences of positive 
  integers $(q_n)_n$ and positive integer $d$-tuples $\bp^{(n)}= (p_1^{(n)}, \cdots, p_d^{(n)})_n$ such that 
  $$\lim p_i^{(n)}/q_n= \alpha_i,\quad   i=1, \cdots,d$$  with  a good  quality  of rational  approximation of
$\balpha$.  
Geometrically,  this corresponds to    looking  for     approximations of  a line in   $\mathbb{R}^{d+1}$
    by  points in  $\mathbb{Z}^{d+1}$. Dual  problems consist  of    looking for  small values of linear forms and  small linear relations.

More precisely, given a norm $||\cdot ||$ on $\GR^{d}$, let  $|||\cdot  |||$ stand for the distance to the nearest integer. The usual norms that are considered  are
the supremum and the Euclidean norm. The quality of the
approximation 
is measured by
$
\frac{1}{q^{(n)}}||| q^{(n)}\balpha|||$, to be compared with  Dirichlet's bound, i.e.,
$|||q^{(n)} \balpha|||$   has to be compared  with  $(q ^{(n)})^{-1/d}$.  One can thus consider  the approximation exponent 
$$ \limsup_n - \frac{\log \Vert  \balpha-\frac{\mathbf{p}_i^{(n)}}{q_i^{(n)}} \Vert}{\log q^{(n)}}$$
and compare it  to the Dirchlet's bound $1+1/d$.

\subsection{How to produce  rational approximations}\label{subsec:approx}
We focus here on two main  approaches for  producing rational approximations. The first one is based on the  generation  
of infinite   convergent sequences  of  matrices obtained dynamically by iteration of a map acting  on a compact space, as illustrated with the Gauss map $T_G$ for usual continued fractions in Section \ref{sec:rcf}. 
This   will be  discussed further in Section \ref{sec:cf}. The second one is based on  the existence of small vectors picked in well chosen lattices; we will discuss it in Section \ref{sec:LLL}.

The first  strategy associates with  some  element $\balpha=(\alpha_1, \cdots, \alpha_d) \in \GR ^d$ 
   a sequence of  square matrices $(A^{(n)})_{ n \in \mathbb{N}}$  of size $d+1$  with integer entries.
   It can be produced for instance via a dynamical system   $(X,T)$  with  a map $A$ as follows (see also \ref{eq:gcocyle}):
 \begin{equation} \label{eq:MA} T \colon X \rightarrow X, \  A \colon  X \rightarrow   GL(d+1,{\mathbb Z}),  \mbox{ and }  A^{(n)}= A(T^n(x)).
 \end{equation}
  If the matrices belong to  $GL(d+1,\GZ)$, then  the  corresponding algorithm is called {\em unimodular}.
   Matrices   $A^{(n)}$ play the role of   partial quotients and the   product  matrices  $A^{(1)} \cdots A^{(n)} $ produce convergents.
Convergents   aim at    providing  Diophantine approximations (via their column vectors)
of   the direction $(\balpha,1)$. 
 We write
  \begin{equation} \label{eq:MAT}
  A^{(1)} \cdots A^{(n)} = 
 \left[\begin{matrix}
 p_{1,1}^{(n)} & \cdots   & p_{1,d+1}^{(n)}  \\
\vdots &\ddots & \vdots\\
 p_{d,1}^{(n)} & \cdots  & p_{d,d+1}^{(n)} \\
 q_{1}^{(n)}  & \cdots  & q_{d+1} ^{(n)}
 \end{matrix}
 \right].
 \end{equation}

The last element of each column  of $A^{(1)} \cdots A^{(n)} $   is  a   denominator for the associated  simultaneous rational approximation. The rows of the convergent matrices are meant to provide  the numerators  of the simultaneous   approximations, i.e.,  one  considers
$$\left(\frac{p_{j,1} ^{(n)}}{q_j ^{(n)}}, \cdots , \frac{p_{j,d} ^{(n)}}{q_j ^{(n)}}\right).$$
The  integers $q_j ^{(n)}$  play the role of  $n$th convergents, and  the 
vector $( p_{j,1} ^{(n)},\cdots, p_{j,d} ^{(n)}, q_{j} ^{(n)})$ is    called an  $n$th convergent vector.
The convergence of these matrices means  that they contract
in the direction of the vector $(\balpha,1)$. We discuss more precisely their convergence in Section \ref{subsec:conv} by 
 considering  products  $ A^{(1)} \cdots A^{(n)}$ as $n$ goes to infinity.

 We now describe  the second approach  based on  the existence of small  vectors in  well chosen lattices, as described in  the 
 seminal paper \cite{LLL:82}. This approach yields a  very fruitful compromise between the quality  of  approximation  (a good approximation is deduced from 
 a small vector) and the  efficiency  (this small vector is obtained in polynomial time). Let $\balpha=(\alpha_1,\cdots, \alpha_d) \in \mathbb{R}^d$ be a vector to approximate.  One  works  here again 
in a $d+1$-dimensional space, by introducing  a one-parameter family of lattices
 $(\Lambda_t)_{t >0}$  with positive   parameter $t$  tending  to $0$. More precisely, let $\Lambda_t$ be the lattice generated by the columns of the  matrix
$$M_t:=\left[ \begin{matrix}
  1&0& \cdots  & 0 & -\alpha_1\\
  0&1&\cdots &0& -\alpha_2\\
    \vdots&\vdots&\ddots&\vdots&\vdots\\
  0&0&\cdots&1&-\alpha_d\\
  0 & 0 &\cdots&0&  {t}
  \end{matrix} \right].$$
Note that   $\det (M_t)=t$, hence, 
 the   lattice   $\Lambda_t$  changes  at each step of the  algorithm. 
 Let us stress the fact that this strategy differs  from the previous one    where, in the unimodular case,   one works with    bases of the fixed   lattice
 $\mathbb{Z}^{d+1}$.
  We  will  take $t$ small, the  parameter $Q$ of   Dirichlet's theorem  being  connected to $t$ as  follows:
  $Q=t^{- \frac{1}{d+1}}$.

 One of the main features of the  LLL  algorithm  is that it produces  in    polynomial time  a  non-zero vector   ${\bf b}=(b_1, \cdots,  b_{d+1})$
  of the lattice $\Lambda_t$ 
 such that  
 \begin{equation}\label{eq:small}
  || {\bf b}||_2 \leq 2^{d/4} \det(M_t)^{1/{(d+1)}} =  2^{d/4} t^{1/{(d+1)}}.
 \end{equation}
 Note that  the geometry of numbers, and more precisely  Minkowski's first theorem,   
 guarantees the  existence of a  ``small'' non-zero   vector  ${\bf x}\in\Lambda_t$, i.e.,  such that 
 \begin{equation} \label{eq:min}
 ||{\bf x}|| \leq  \sqrt {(d+1)(d+5)/4}  \  ( \mbox{vol} (\Lambda_t))^{1/(d+1)}=  \sqrt {(d+1)(d+5)/4} \   t^{1/{(d+1)}}.
 \end{equation}

Let $({\bf e}_i)_{i=1,\ldots,d+1}$  stand for the canonical   basis of $\mathbb{Z}^{d+1}$.
There exist  integers $p_1,\ldots,p_d,q $    such that 
 $$\begin{array}{ll}
 {\bf b} &= p_1 {\bf e}_1 + p_2 {\bf e}_2 + \dots + p_d {\bf e}_d+q(-\alpha_1 {\bf e}_1 - \dots -\alpha_d {\bf e}_d + t {\bf e}_{d+1})\\
    &=
    (p_1 - q\alpha_1){\bf e}_1+ \cdots + (p_d - q\alpha_d) {\bf e}_d+ qt {\bf e}_{d+1}.
    \end{array}$$

One deduces  from (\ref{eq:small}) that for all $1\le i\le d$
$$    \quad |p_i - \alpha_i q| \leq 2^{d/4} t^{1/{(d+1)}}$$  
and 
$$qt \leq 2^{d/4}  t^{1/{(d+1)}}, \mbox{ i.e., }  \  t ^{\frac{1}{d+1}}\leq  \frac{2 ^{1/4}}{ q ^{1/d}}. $$

For all $i$, we deduce that
$$|p_i - \alpha_i q| \leq   \frac{2^{  {(d+1)/4} }}{  q ^{  {1/d}}} ,$$
with  $$|q| \leq 2^{d/4} t^{-d/(d+1)}=2^{d/4}Q^d.$$

The quality of approximation is   the  quality that is   expected (according to Dirichlet's theorem)   up    to  a   multiplicative  factor   
$2^{  {(d+1)/4} }$  which depends  exponentially     on the  dimension. We could   have used the inequality (\ref{eq:min}) which would have given a different multiplicative  factor, but the same quality  ($q^{1/d}$). Nevertheless, the interest of a lattice reduction algorithm such as the LLL algorithm
is that  the small vector that is used is found in polynomial time.

For a  given $t>0$, the smallest vector of the lattice $\Lambda_t$ produces  the  best approximation.  One can ask whether it is possible to devise a  continued fraction algorithm from  this. Note that one   has     to   recompute  everything  from the beginning when one changes $t$. This will be discussed further in Section \ref{sec:LLL}.
Algorithms defined dynamically   with  maps such in (\ref{eq:MA})
are on the contrary called memory-less (see Section \ref{sec:cf}).





\subsection{Convergence and Lyapunov exponents}\label{subsec:conv}


One important feature of the first strategy (based on the generation  of infinite products of matrices)  from  Section \ref{subsec:approx} is that we are still  able to measure the quality of approximation that is produced in view of Dirichlet's theorem. To do this, we need to measure  the quality of  convergence of infinite products of matrices; this can be done  using Lyapunov exponents. One  can either measure the convergence of a given product of matrices or consider the  generic behavior of products of matrices generated by a dynamical system. Lyapunov exponents  allow,   among other things, the description of  the growth of the logarithm of the angles between  column vectors  of products of matrices $M_0 \cdots M_n$.


We state now the  definitions concerning convergence. The norm $\Vert \cdot \Vert$  refers to the Euclidean norm and the distance $\mathrm{d}$ below refers to the  usual associated distance of a point to a line. Let  $M=(M_n)_{n\in{\mathbb N}}$ be a sequence of  square  matrices of size $d$ and let $\bl\in {\mathbb R}^{d}$.  Let  $(\be_1, \cdots, \be_{d})$ stand for the canonical  basis of ${\mathbb R}^{d}$. We say that $M$ is \emph{weakly convergent } to $\bl$ if
\[\lim_{n\to+\infty} \mathrm{d}\left(\frac{M_0 \cdots M_{n-1} \be_i}{\lVert M_{0}\cdots M_{n-1} \be_i\rVert},\bl\right) = 0\quad \hbox{for all }i\in \{1,\ldots,d\}.
\]
We say that $M$ is \emph{strongly convergent } to $\bl$ if
\[\lim_{n\to+\infty} \mathrm{d}(M_{0} \cdots M_{n-1} \be_i,{\mathbb R} \bl) = 0\quad \hbox{for all }i\in \{1,\ldots,d\}.\]
Lastly, we say that $M$ is \emph{exponentially convergent } to $\bl$ 
if there exist $C,\gamma>0$ such that 
$$
\mathrm{d}(M_{0} \cdots M_{n-1}\be_i,{\mathbb R}\bl) < C e^{-\gamma} \quad \hbox{for all }n\in {\mathbb N}  \hbox{ and for all }i\in \{1,\ldots,d\}.
$$

Exponential convergence is a first step in the  direction  of good rational approximations. The constant $\gamma$ then has to be compared with  Dirichlet's bound.

Theorem \ref{thm:furs} below gives  a sufficient condition for the   sequence of cones $M_0 \cdots M_n  {\mathbb R}_+^d$ to nest down to a single  line  as $n$ tends to infinity  for  square matrices $M_i$  with non-negative entries.  It can be seen as a generalization of  the classical Perron--Frobenius theorem.  This  statement is  particularly useful in the present   context since   multidimenensional continued fraction algorithms often  generate
 non-negative matrices.   Hence, weak convergence is usually not an issue  for  multidimensional continued fraction algorithms. 
\begin{theorem}[{\cite[pp.~91--95]{Furstenberg:60}}]\label{thm:furs}
 Let $(M_n)_n$ be a sequence of  non-negative integer matrices  of  size $d$. Assume that there exist a strictly positive matrix~$B$ and indices $j_1 < k_1 \le j_2 < k_2 \le \cdots$ such that $B = M_{j_1} \cdots M_{k_1-1}= M_{j_2} \cdots M_{k_2-1} = \cdots$. Then,
\[
\bigcap_{n\in\mathbb{N}} M_0 \cdots M_{n-1} {\mathbb R}^d_+ = {\mathbb R}_+ \bl \quad \mbox{for some positive vector $\bl \in {\mathbb R}_+^d$.}
\]
\end{theorem}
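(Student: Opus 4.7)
The natural tool here is Birkhoff's contraction theorem, which says that a strictly positive matrix acts as a strict contraction on the positive cone $\mathbb{R}^d_+$ endowed with Hilbert's projective metric. My plan is to exploit the recurrence of the fixed strictly positive block $B$ to show that the nested cones collapse projectively to a single ray.

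First I would equip the (projectivised) open positive cone with Hilbert's projective metric
\[
\mathrm{d}_H(\bu,\bv) = \log \frac{\max_i u_i/v_i}{\min_i u_i/v_i}.
\]
Two standard facts about this metric are needed: (i) every non-negative matrix $M$ that sends $\mathbb{R}^d_+$ into itself is non-expansive, i.e.\ $\mathrm{d}_H(M\bu,M\bv)\le \mathrm{d}_H(\bu,\bv)$; and (ii) if $M$ is strictly positive, then it is a strict contraction with Birkhoff coefficient
\[
\tau(M) = \tanh\bigl(\tfrac{1}{4}\,\mathrm{diam}_H(M\mathbb{R}^d_+)\bigr) < 1,
\]
so that $\mathrm{d}_H(M\bu,M\bv)\le \tau(M)\,\mathrm{d}_H(\bu,\bv)$. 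Applied to $B$, the constant $\tau:=\tau(B)$ is strictly less than $1$ and depends only on $B$. I would take these as the inputs of the argument.

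Now set $C_n := M_0 \cdots M_{n-1}\,\mathbb{R}^d_+$. Since each $M_i$ is non-negative, $(C_n)_n$ is a decreasing sequence of closed convex cones inside $\mathbb{R}^d_+$. For any $n \ge k_m$, we can factor
\[
M_0 \cdots M_{n-1} = M_0\cdots M_{j_m-1}\;B\; M_{k_m}\cdots M_{n-1},
\]
so $C_n \subseteq M_0 \cdots M_{j_m-1}(B\cdot \mathbb{R}^d_+)$. The image $B\cdot \mathbb{R}^d_+$ lies in the interior of the positive cone, hence has finite Hilbert diameter, and composing further with the non-expansive non-negative factors $M_0\cdots M_{j_m-1}$ keeps the Hilbert diameter of $C_n$ bounded. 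Applying the contraction $m$ times (using the $m$ disjoint occurrences of $B$) yields
\[
\mathrm{diam}_H(C_n) \le \tau^{m}\,\mathrm{diam}_H(B\mathbb{R}^d_+) \xrightarrow[m\to\infty]{} 0.
\]
By the completeness of the projective cone in the Hilbert metric, the nested intersection of the projectivised cones is a single point, which lifts to a ray $\mathbb{R}_+\bl$. Positivity of $\bl$ is automatic, since every $C_n$ with $n\ge k_1$ is contained in $B\cdot \mathbb{R}^d_+ \subset \mathrm{int}(\mathbb{R}^d_+)$, and the intersection of the compactified projective cross-sections is therefore a point in that interior.

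The main obstacle is really the Birkhoff contraction inequality $\tau(B)<1$ for strictly positive $B$; everything else is a bookkeeping argument combining non-expansiveness of the intermediate factors with the infinite recurrence of $B$. Once Birkhoff's inequality is invoked as a black box, the proof is essentially a telescoping estimate. A small additional point worth verifying is that the convergence of the projective diameters to $0$ indeed forces the intersection of the (unnormalised) cones in $\mathbb{R}^d$ to be one-dimensional, which follows by fixing a compact cross-section (e.g.\ the simplex $\{\bx\in\mathbb{R}^d_+ : \sum_i x_i = 1\}$) and applying the nested intersection property of compact sets whose diameters shrink to $0$.
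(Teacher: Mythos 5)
The paper does not prove Theorem \ref{thm:furs} at all: the statement is quoted directly from Furstenberg \cite{Furstenberg:60}, so your attempt can only be measured against the classical argument. Your route --- Hilbert's projective metric together with Birkhoff's contraction coefficient --- is the standard modern proof of precisely this statement, and it is the same contraction mechanism as in Furstenberg's original treatment (which contracts ratios of entries of the partial products by hand). Your telescoping estimate is sound up to a harmless off-by-one: the first occurrence of $B$ can only be used to \emph{cap} the projective diameter by $\Delta:=\mathrm{diam}_H(B\,\mathbb{R}^d_+)$, since the incoming cone may have infinite diameter, so $m$ occurrences of $B$ yield $\mathrm{diam}_H(C_n)\le\tau^{m-1}\Delta$ rather than $\tau^{m}\Delta$; this changes nothing. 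The passage from vanishing projective diameters to a one-dimensional intersection via a compact cross-section is also correct.

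The genuine flaw is in your positivity step. You assert that every $C_n$ with $n\ge k_1$ is contained in $B\,\mathbb{R}^d_+$; this forgets the prefix. The correct containment is $C_n\subseteq M_0\cdots M_{j_1-1}\bigl(B\,\mathbb{R}^d_+\bigr)$, and nothing in the stated hypotheses forces that set into the interior of the cone, because the matrices outside the $B$-blocks are arbitrary. Concretely, for $d=2$ take $M_0=\left(\begin{smallmatrix}1&1\\0&0\end{smallmatrix}\right)$ and $M_n=B=\left(\begin{smallmatrix}1&1\\1&1\end{smallmatrix}\right)$ for all $n\ge1$ (blocks of length one): the hypotheses hold, yet $\bigcap_n C_n=\mathbb{R}_+(1,0)$ --- a single ray, but not a positive one; and if a ``gap'' matrix were the zero matrix, the intersection would even be $\{0\}$. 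So the strictly positive limit direction you claim is simply not a theorem for arbitrary non-negative integer matrices; the statement as reproduced in the paper is itself loose on this point. The repair is the hypothesis that is implicit in every application the paper makes to unimodular algorithms: assume each $M_n$ has no zero row (automatic for invertible non-negative matrices). Such matrices map the open cone into itself, so the corrected containment gives $C_n\setminus\{0\}\subseteq\mathrm{int}(\mathbb{R}^d_+)$ for $n\ge k_1$, hence $\bl$ strictly positive; moreover, every set to which you apply a gap matrix then lies in the open cone, which also removes the subtlety (unaddressed in your write-up) that non-expansiveness of a non-negative matrix with a zero row only makes sense for the Hilbert metric of the face carrying the image. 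With that hypothesis added, your proof is complete.
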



Let us  now focus on strong convergence by  first  recalling  the definition of  the Lyapunov exponents of a   sequence of matrices. For a  matrix $M$ in $\mathrm{GL}(d,\R)$, the singular values $\delta_1,\ldots,\delta_d$ are the eigenvalues of the matrix $\left( \tra{M}M\right)^{1/2}$. Let us order  these  (positive and real)  values  as  $\delta_1 \ge \delta_2\ge \cdots \ge \delta_d$.
Given  a  sequence $\bM=(M_n)_{n\in {\mathbb N}}$ of matrices in $\mathrm{GL(d,\R)}$,   the \emph{$i$-th Lyapunov} exponent $\theta_i$ is  then  defined as the limit $$\theta_i:=\lim_{n\to\infty}\frac 1n\log(\delta_i(n)),$$ if this limit exists, with  $\delta_i(n)$ being  the $i$-th singular value of $M_0 \cdots M_{n-1}$. 
The Lyapunov exponents can also be defined recursively using exterior powers (see for example \cite[Proposition 3.2.7]{Arnold98})
by
\[  
\theta_1 + \cdots +\theta_k = \lim_{n\to\infty} \frac{1}{n}\log \lVert\wedge^k M_{0} \cdots M_{n-1}\rVert, \quad k=1,\dots,d, 
\]
provided that the limit exists.  One has  $\theta_1 \ge \theta_2 \ge \cdots \ge \theta_d$.
A sufficient condition for strong convergence can then be stated as  follows (see \cite[Proposition 3.4.2 (ii)]{Arnold98} and  \cite{ABMST:19}):
Let  $\bM = (M_n)_{n\in {\mathbb N}}$ be a sequence of non-negative matrices in $\mathrm{GL}(d,{\mathbb N})$ for which the Lyapunov exponents exist; 
if $\bM$ satisfies the  growth  condition $\limsup_{n\to\infty}\frac1n\log \lVert M_n\rVert \le 0$   together  with the condition $\theta_1 > 0> \theta_2$, then strong  convergence holds.

Lyapunov exponents can be also defined for random products of matrices, where the  randomness can be  provided by  putting some distribution on the set of  matrices or  by iterating   measurable dynamical  systems that produce matrices  with a cocyle map  such as in (\ref{eq:gcocyle}). The first results in this direction were stated  for  sequences of independent random matrices with a given distribution function, with 
the Furstenberg–Kesten theorem  (see \cite{FurKes:60,Fur:63});  these results have then  been refined  via  Kingman’s subadditive ergodic theorem  and lastly via Oseledets' multiplicative ergodic theorem  (see for instance \cite{Arnold95,Arnold98,Viana:book})   proving that the limits  (involved in the  definition of Lyapunov exponents)  exist
almost surely and take almost everywhere the same value.

Let us give a flavour of such results.  We will come back to  this also in Section \ref{subsec:dyn}.  We  first  recall a few elements from  ergodic theory. The (left)
{\em  shift}  $S$  acts on a sequence  $(M_n)_{n \in {\GN}}$ as 
$S((M_n)_{n})=(M_{n+1})_n$ (i.e.\ the  first term of the sequence $(M_n)_n$ is deleted).  Let  ${\mathcal M}$ be a finite set of matrices in $\mathrm{GL}(d,{\mathbb Z}).$  Let $D\subset \mathcal{M}^{\mathbb N }$  be a closed shift-invariant  subset of $\mathrm{GL}(d,{\mathbb Z})^{\mathbb N }$.
A probability measure $\nu$ on $D$ is
called invariant if $\nu(S^{-1} A) = \nu(A)$ for every measurable set $A \subset D$.
An invariant probability measure on $D$ is  \emph{ergodic} if any shift-invariant measurable set
has either measure $0$ or $1$.  We have seen  (see  \eqref{eq:MA}) that such a set of matrices can be obtained  by  considering a  measurable map $T$ acting on some compact metric space and  a measurable map 
$A:  X \rightarrow \mathrm{GL}(d,{\mathbb Z}).$
The  sequences of matrices are then  of the form 
$A(T^n(x))$ and one studies the existence  and the almost everywhere behavior of   limits   of the form 
$$\lim_{n\to\infty} \frac{1}{n} \log \Vert A(x) \cdots A(T^{n-1}(x))\Vert.$$

\begin{theorem}\cite[Theorem~3.4.11]{Arnold98}\label{theo:genly}
 Let  $D\subset \mathcal{M}^{\mathbb N }$  be a closed shift-invariant  subset of $\mathrm{GL}(d,{\mathbb Z})^{\mathbb N }$ together with a shift invariant measure $\nu$.  Assume that  $(D,S,\nu)$ is  ergodic.   Let $A:D\to \mathrm{GL}(d,{\mathbb Z})$,   $\bM=(M_n)_n \mapsto M_0$. Assume that $A$ is log-integrable, i.e.,  
\begin{equation}\label{eq:logint} 
\int_D \log \max\{\lVert A(\bM)\rVert_{\infty}, \lVert A(\bM)^{-1}\rVert_{\infty}\}d\nu(\bM) < \infty .
\end{equation}
Then the quantities $\theta_i$ which are recursively defined by
\begin{equation} \label{eq:LyGenericBothSides}
\theta_1 + \cdots +\theta_k = {\lim_{n\to \infty}} \frac{1}{n}\log \lVert\wedge^k M_0 \cdots M_{n-1} \rVert, \quad k=1,\ldots,d, 
\end{equation}
exist and do not depend on $\bM$ for almost all $\bM \in D$. 
Here,  $\wedge^k $ stands for the  $k$-fold wedge product.
\end{theorem}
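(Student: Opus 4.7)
The plan is to obtain the statement as a consequence of Kingman's subadditive ergodic theorem applied simultaneously to the $d$ processes
\[ f_n^{(k)}(\bM) := \log \lVert \wedge^k (M_0 \cdots M_{n-1}) \rVert, \quad k=1,\ldots,d, \]
over the measure-preserving system $(D,S,\nu)$, and then to recover the individual exponents $\theta_k$ as successive differences.

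First I would verify that each $(f_n^{(k)})_n$ is a subadditive cocycle. Writing $M_0 \cdots M_{n+m-1} = (M_0 \cdots M_{n-1})(M_n \cdots M_{n+m-1})$ and using the submultiplicativity of the operator norm on exterior powers, $\lVert \wedge^k (AB) \rVert \leq \lVert \wedge^k A \rVert \cdot \lVert \wedge^k B \rVert$, one obtains
\[ f_{n+m}^{(k)}(\bM) \leq f_n^{(k)}(\bM) + f_m^{(k)}(S^n \bM). \]
I would then translate \eqref{eq:logint} into integrability of $f_1^{(k)}$. For the positive part, $\lVert \wedge^k A \rVert \leq \lVert A \rVert^k$ dominates $(f_1^{(k)})^+$ by $k \log^+ \lVert A(\bM) \rVert_\infty$, which is $\nu$-integrable by hypothesis. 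For the negative part, since each $M \in \mathrm{GL}(d,\mathbb{Z})$ satisfies $\lvert \det M \rvert \geq 1$, the bound $\lVert \wedge^k A \rVert \geq \lvert \det A \rvert \cdot \lVert \wedge^{d-k}(A^{-1}) \rVert^{-1} \geq \lvert \det A \rvert \cdot \lVert A^{-1} \rVert^{-(d-k)}$ controls $(f_1^{(k)})^-$ by $(d-k) \log^+ \lVert A(\bM)^{-1} \rVert_\infty$, again integrable by \eqref{eq:logint}.

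Kingman's subadditive ergodic theorem then yields the $\nu$-a.e.\ existence of
\[ \Lambda_k(\bM) := \lim_{n\to\infty} \frac{1}{n} f_n^{(k)}(\bM) = \inf_{n\geq 1}\frac{1}{n}\int_D f_n^{(k)}\,d\nu, \]
and the $S$-invariance of $\Lambda_k$ together with the ergodicity of $\nu$ forces $\Lambda_k$ to be $\nu$-a.e.\ constant. Invoking the identification $\lVert \wedge^k M \rVert = \delta_1(M) \cdots \delta_k(M)$ of the operator norm of the $k$-th exterior power with the product of the top $k$ singular values, the constant value of $\Lambda_k$ is by definition $\theta_1 + \cdots + \theta_k$, proving \eqref{eq:LyGenericBothSides}. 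The individual exponents are then recovered as $\theta_k := \Lambda_k - \Lambda_{k-1}$, and the expected ordering $\theta_1 \geq \cdots \geq \theta_d$ is inherited from the ordering of singular values at each finite $n$.

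The only delicate point I expect to encounter is the control of the negative part of $f_1^{(k)}$: the symmetric form of \eqref{eq:logint}, which involves both $\lVert A(\bM)\rVert_\infty$ and $\lVert A(\bM)^{-1}\rVert_\infty$, is exactly what prevents $\Lambda_k$ from collapsing to $-\infty$, and the integer-matrix hypothesis ensures $\lvert \det M \rvert \geq 1$ so that no further compensation is needed. Everything else is a routine verification of the hypotheses of Kingman's theorem, combined with the ergodicity assumption to upgrade an a.e.\ limit into an a.e.\ constant.
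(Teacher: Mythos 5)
The paper does not prove this statement at all: it is quoted verbatim from Arnold's book \cite[Theorem~3.4.11]{Arnold98}, so the only comparison available is with the standard proof in the cited source. Your argument is exactly that standard proof: Kingman's subadditive ergodic theorem applied to the subadditive cocycles $f_n^{(k)}(\bM)=\log\lVert\wedge^k(M_0\cdots M_{n-1})\rVert$ for $k=1,\ldots,d$, ergodicity upgrading the a.e.\ limits to a.e.\ constants, and the exponents recovered as successive differences via the identification $\lVert\wedge^k M\rVert=\delta_1(M)\cdots\delta_k(M)$; this is also the route the survey itself alludes to in Section~3.3. One detail should be repaired: the first inequality in your negative-part bound, $\lVert\wedge^k A\rVert\geq\lvert\det A\rvert\cdot\lVert\wedge^{d-k}(A^{-1})\rVert^{-1}$, is not a general linear-algebra fact (for $A=2I$, $d=2$, $k=1$ it would read $2\geq 8$); the correct general identity is $\lVert\wedge^k A\rVert=\lvert\det A\rvert\cdot\lVert\wedge^{d-k}(A^{-1})\rVert$. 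Your inequality does happen to hold here because $A^{-1}\in\mathrm{GL}(d,\mathbb{Z})$ forces $\lVert\wedge^{d-k}(A^{-1})\rVert\geq 1$, but the cleaner route is to bound every singular value below by $\delta_i(A)\geq\lVert A^{-1}\rVert^{-1}$, giving $\lVert\wedge^k A\rVert\geq\lVert A^{-1}\rVert^{-k}$ and hence $(f_1^{(k)})^-\leq k\log^+\lVert A^{-1}\rVert_\infty$ up to a norm-equivalence constant; indeed, since all matrices and their products are unimodular, one even has $f_n^{(k)}\geq 0$ with the spectral norm, so finiteness of the limits is immediate. With that one-line fix your proof is complete and coincides with the approach of the cited reference.
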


\section{Higher-dimensional   dynamical continued fractions} \label{sec:cf}

We   now focus on  multidimensional  continued fraction algorithms.  Their non-canonicity is  discussed in Section \ref{subsec:noncan}.  Section \ref{subsec:expected} recalls what  is usually expected from  a continued fraction algorithm.  We then focus on  algorithms    obtained  by iteration of a  dynamical system, which yield the so-called memory-less  algorithms. The  interest of their dynamical description is highlighted in Section \ref{subsec:dyn}.
Lastly, we  give examples of such algorithms in Section \ref{subsec:zoo}. 


\subsection{Non-canonicity in higher dimension }\label{subsec:noncan}

The aim of this section is to   present   several  facts    sustaining the claim  that  there is no canonical   multidimensional continued fractions algorithm. 

Firstly,   (usual)  continued fractions rely on 
Euclid's algorithm:   starting with  two numbers, one subtracts   the smallest from the largest (see Section \ref{sec:cf}).
If we start with   at least three numbers, it is not  clear how to decide  which  operation  has to    be performed, hence the diversity  of existing   generalizations.  For instance,  Brun's algorithm can be described  as  subtracting  the second largest entry from the largest one. See Section \ref{subsec:zoo}  for  more details.


Moreover,  the  specific    algebraic structure of  $SL(2,\mathbb{N})$ plays an important role 
for  one-dimensional continued fractions algorithms. Indeed,  the matrices produced  in the case of  usual  continued fractions  are unimodular matrices
with non-negative   integer coefficients (see Section \ref{sec:rcf}).    The    algebraic   structure of $SL(2,\mathbb{N})$   is particularly simple:
  $SL(2,\GN)$  is a free  and finitely generated monoid; it admits 
\begin{equation}\label{eq:generators}\begin{bmatrix}
1&0\\
1&1
 \end{bmatrix} \quad \text{and}\quad  \begin{bmatrix}
1&1\\
0&1
 \end{bmatrix}
 \end{equation}
as generators; any matrix  in $SL(2,\GN)$ thus admits a unique decomposition
in terms of the matrices given in \eqref{eq:generators}.
This decomposition is  a matricial translation
 of  Euclid's
algorithm,
 and   the    continued fraction expansion  of $\alpha$  can be  recovered from 
 the unique   decomposition of  matrices 
 \[(-1)^n \begin{bmatrix}
p_{n+1} & q_{n +1}\\
p_n & q_n 
\end{bmatrix}, \quad n\geq 0\]
in the free monoid $SL(2,\mathbb{N})$.  This explains why most one-dimensional continued fraction algorithms are  closely  related.

The situation   is completely different for  $SL(3,\GN)$ which is  not  finitely generated.
 Consider  e.g.\ the family of matrices 
     $${ M}_n:= \begin{bmatrix}
     1 &0 &  n \\
     1 & n-1 & 0\\
     1 & 1   & n-1
     \end{bmatrix}  .$$
     According to    \cite[Chap.\ 12]{PytheasFogg:2002}  these  matrices  are       undecomposable  for $n \geq 3$:
  they are  not equal to  an even permutation matrix, and,  for any
pair of matrices  $A,B \in SL(3,\GN)$ such that 
${M}_n={ A}{ B}$,  ${A}$ or ${ B}$ is an even permutation matrix.

Another approach for generalizing  continued fractions could rely  on properties of best approximation. 


\begin{definition} \label{def:ba}
A rational number  $p/q$  is said to be     a   {\em best  approximation}  of a real   number
$ \alpha$  if     every  $p'/q'$ with  $  1\leq q' \leq q$, $p/q \neq p'/q'$
satisfies 
 $$|q \alpha -p|   <|q' \alpha -p'|.$$
 \end{definition}
Convergents in the continued fraction expansion of $\alpha$  and  best approximations  are known to coincide \cite{Cassels,Khintchine}.
Nevertheless, this notion  is not  so satisfying for defining  continued fractions in higher dimensions     as stressed in  \cite{Lagarias:82a,Lagarias:82b}.
Firstly,    best approximations  depend on the  choice of  a norm \cite{Lagarias:82a}, and secondly,
the unimodularity property is lost.  
  More precisely, one   has the following.
  \begin{definition} \label{def:BAd}
      Let $\balpha \in [0,1]^d$. Let $||\cdot||$ be a given norm in 
$\mathbb{R}^d$ and let  $|||\cdot|||$ denote the distance to the nearest integer. The \textit{sequence of best approximations} of $\balpha$ with respect to the norm
$||\cdot||$  is  defined as the increasing sequence of   non-negative integers  $(q^{(n)})_{n \in \mathbb{N}} $
such that
$|||q^{(n )}\balpha||| < |||q \balpha|||$
for any  $q$  with  $1 \leq q  < q^{(n)}$.
  \end{definition}
  The existence of an infinite sequence of    best approximations can  be   derived in a classic way   from Dirichlet's theorem or from  Minkowski's first  theorem. Best approximations  fail to be unimodular    \cite{Lagarias:82b}.
More precisely, consider the   square  matrix  $M_n$ of size $d+1$ whose rows are given by successive best  approximations vectors ${\bf{v}}_n=(p^{(n)}_1, \cdots, p_d ^{(n)},q^{(n)})$   providing  $ |||q^{(n)} \balpha  |||$. Let $D_n$ stand for the determinant of this matrix.
It is proved in   \cite{Lagarias:82b} that for any norm  in dimension $d \geq 2$,  there
exists
$\balpha \in \mathbb {R} ^d$, with
$\dim_{\mathbb Q} [1,\alpha_1, \cdots, \alpha_d]= d+1$, such that  for any positive integer $N$,
 there exists an $n$ for which $D_n=D_{n+1}=\cdots=D_{n+N}=0$. 
 Arbitrarily large determinants can even occur in dimension $d=2$ with the supremum norm.  For more on   best approximations and  multidimensional continued fractions, see the survey \cite{CHEV}; see  also  \cite{Mosh:2007} and Section \ref{sec:LLL}.
 

\subsection{What  is expected?}\label{subsec:expected}

We  briefly recall here  the main properties expected from a continued fraction expansion. 
  For a  general discussion on the   quest for  suitable  higher-dimensional continued fractions,  see \cite{BRENTJES}. See also  \cite{Grabiner}
  for a discussion on their limitations.
We already discussed the fact that a  continued fraction algorithm  is expected 
to  yield  simultaneous   better and better  rational approximations with  the same denominator       for    $d$-tuples   $\balpha=(\alpha_1,\cdots, \alpha_d)$  in $[0,1]^d$,  in an effective
way 
  and  with a  good  approximation quality. 
  More precisely, it  has   to   produce a  sequence of positive integers $(q^{(n)})_n$ such that the 
  distance to the nearest integer   $|||  q^{(n)}  \balpha|||$  converges exponentially fast to  
  $0$ with  respect to $q^{(n)}$, and ideally in $(q^{(n)} )^{- \frac{1}{d}}$ (as predicted by  Dirichlet's theorem).

 From an arithmetic viewpoint,  a  multidimensional    continued fraction algorithm is also   expected  to 
detect linear relations  between $1,\alpha_1,\cdots,\alpha_d$,  and  to  give  algebraic  characterizations of   periodic  expansions.
Furthermore, using such an algorithm, one could hope to 
determine   fundamental units  (e.g.\ of a cubic  number field), and  to solve  Diophantine equations (as the ones described in Section \ref{sec:applications}).
The  formalism  of 
 multidimensional continued fractions based on  the Klein polyhedra and sails   developed  in
     \cite{Arnold:89,Lachaud:93,Korkina:94,Arnold:98bis} is well-suited  for the  detection of    periodic expansions in terms of algebraic number fields by providing  generalized Lagrange's theorem.  For more on the subject and its history,
     see e.g.  \cite{Lachaud:98a,Lachaud:98},  the references  in \cite{Karpenkov:09}, and the book \cite{Karpenkov:13}, which also includes a review of various generalizations of  continued fractions.



     Concerning the properties of best approximations, 
we have seen in Section \ref{subsec:noncan} that the sequence of best approximations (see Definition \ref{def:BAd}) depends heavily on the chosen norm  and that the associated transformations are no longer unimodular \cite{Lagarias:82b,Mosh:2007}. However, 
it is possible to  use the  action of  the diagonal flow on the space of unimodular lattices   to better understand their behavior  \cite{Lagarias:94,KLM,CC:23}. See also Section \ref{sec:LLL}.

  From  a dynamical viewpoint, continued fraction algorithms are also expected
  to   have   reasonable     ergodic  properties   such as the ones described in  Section \ref{subsec:dyn}. For instance, we would like to have  control  over the (almost sure) behaviors concerning the growth  of the convergents, the   distribution  of the  partial quotients, or  the  speed  of convergence via Lyapunov exponents.
 Famous examples  of    algorithms   expressed   dynamically as  piecewise linear fractional transformations are  the  Jacobi--Perron,  Brun or  Selmer algorithms. Their description is the object of   Sections \ref{subesec:dyn} and \ref{subsec:zoo} below.

\subsection{Dynamical continued fraction algorithms}\label{subesec:dyn}
We recall  here  the main concepts related to dynamical continued fraction algorithms, expanding  the brief description    from Section \ref{subsec:approx}.
  This dynamical formalism  covers the most classical  
multidimensional continued fraction algorithms discussed in  the classical  references \cite{Szekeres,BRENTJES,SCHWEIGER}, see \cite{Lagarias:93} for more details. See also \cite{Lagarias:93} for  a   description of  the most classical multidimensional continued fraction algorithms  together with the results concerning  their Lyapunov exponents.

We   consider here algorithms that produce the sequence of matrices  $(A^{(n)})_{ n \in  \mathbb{N}}$  in  a dynamical way. 
We take  mostly  a measure-theoretical  viewpoint: the algorithms will be defined  almost everywhere with respect to the Lebesgue measure on $[0,1]^{d}$. We focus  on the  unimodular case, since this allows  a geometric interpretation in terms of  bases of  the integer  lattice ${\mathbb Z}^{d+1}$.



Let $X\subset [0,1]^{d}$. (Usually $X$ is simply $[0,1]^{d}$ but some algorithms can  be also defined  on  sets  of the form $ \{x= (x_1, \cdots, x_{d})  \in [0,1]^{d} \mid  
0 \leq x_1 \leq \cdots \leq x_{ d} \leq 1\}.$)
A $d$-dimensional  unimodular \emph{continued fraction map} over $X$ is   given by   measurable  maps 
$$ T \colon X \rightarrow X, \quad  A \colon  X \rightarrow   GL(d+1,{\mathbb Z}), \quad \theta \colon X \rightarrow  {\mathbb R}$$
such that 
for a.e.   $\balpha\in X$:
\begin{equation}\label{eq:pm}
    \begin{bmatrix}\balpha\\
1
\end{bmatrix}
= \theta(\balpha)  A(\balpha) \begin{bmatrix}
    T(\balpha)\\1
\end{bmatrix}.
\end{equation}
The maps $A$ and $T$ play the main role in the algorithm; the  role played  by the map  $\theta$  is minor: it  serves as a renormalization.

The associated \emph{continued fraction algorithm} consists of iteratively applying the map $T$ to a vector ${\balpha}\in X$.
This yields a sequence of matrices $
(A(T^n({\balpha})))_{n\geq 1}
$, called the \emph{continued fraction expansion} of ${\balpha}$.
One has 
$$\begin{bmatrix}
 \balpha \\
1
\end{bmatrix}=  \theta({\balpha} ) \theta({ T(\balpha)}) \ldots \theta({ T^{n-1}(\balpha)})  A(\balpha ) A({ T(\balpha)}) \ldots A({ T^{n-1}(\balpha)})\begin{bmatrix}
T^n(\balpha)\\
1\end{bmatrix}.$$

Such an algorithm is said to be  `without memory'. Indeed,   the $(n+1)$-th step of the algorithm depends only on  the map $T$ and on the value $T^n(\balpha)$. This  is in contrast 
with the    algorithms based on  lattice reduction  that we will discuss
 in Section \ref{sec:LLL}.

In most classical examples of such algorithms the continued fraction map $T$ is piecewise continuous, or even a piecewise homography (see Section  \ref{subsec:zoo}). Further, many  of them are {\em  positive}, that is, all the  matrices appearing as the images of the map $A$ are non-negative.
 

 
 We illustrate  this formalism with the regular  continued fraction case.  Considering 
 \begin{align*}
\left[\begin{array}{l}
\alpha\\
1
\end{array}\right] &=\alpha T_G(\alpha) \cdots T_G^{n-1}(\alpha)  \left [\begin{array}{ll}
0 & 1\\
1 & a_1 
\end{array}\right] \cdots \left [\begin{array}{ll}
0 & 1\\
1 & a_n 
\end{array}\right]   \begin{bmatrix}
T_G^{n}(\alpha)\\
1
\end{bmatrix} \\
& =  \alpha T_G(\alpha) \cdots T_G^{n-1}(\alpha)  \left [\begin{array}{ll}
p_{n-1} & p_{n}\\
q_{n-1}& q_{ n}
\end{array}\right]  \begin{bmatrix}
T_G^{n}(\alpha)\\
1
\end{bmatrix},
\end{align*}
  one gets
$$\alpha\,  T_G(\alpha) \cdots T_{G}^{n-1} (\alpha)= | q_{n-1} \alpha  - p_{n -1}|,$$ where $p_n/q_n$ stands for the $n$-th convergent of  $\alpha$,
and   $T_G$ stands for the Gauss map. 
 We see  with this example that the map $\theta$  which allows the renormalization  with respect to the last coordinate (set   to $1$) is of  an arithmetic nature.

We  now   revisit this   formalism  more geometrically  in terms of lattice bases following \cite{BRENTJES}.  One   approximates the   vectorial  line  in ${\mathbb R}^{d+1}$
 directed    by the non-zero vector 
 $(\balpha,1)$
 by   a sequence   of integer  lattice bases $({\bf b}^{(n)})_{n \in  \mathbb{N}}$ of $\mathbb{Z} ^{d+1}$, namely the $d+1$  columns of the matrices  $A(\balpha) \cdots A(T^{n-1} (\balpha)$). 
 The lattice bases    generate  cones  that  are expected to  
converge  toward the  line  directed by $(\balpha,1)$.
 Moreover, if the algorithm is  positive, then these cones are nested and  
$(\balpha,1)$  belongs to the   positive  cone  generated by the vectors  ${\bf b}_i ^{(n)}$,  
 $i =1,\ldots, d+1$, i.e., 
 $$(\balpha,1) \in \left \{\sum _{ 1 \leq i \leq d+1} \lambda _i  {\bf b}_i ^{(n)}\mid      \lambda_i \geq 0 \   \text{ for all } i =1,\cdots,d+1  \right\}.$$
 In fact, (\ref{eq:pm}) implies that  the coefficients 
 $(\lambda_i)_{1 \leq i \leq d+1}$ are  proportional to
 the vector $(T^n (\balpha), 1)$.
 The algorithm  thus produces  a  sequence of  bases of 
 the lattice ${\mathbb Z}^{d+1}$  that all   determine  a homogeneous cone  in ${\mathbb R}^{d+1}$  that contains the ray 
$\{ \lambda (\balpha,1) \mid  \lambda \geq 0\}$.    
This is the viewpoint developed for instance in  \cite{BRENTJES}; in fact,  the algorithms there are designed in  this way.

We recall that the continued fraction map $T$ acts on parameters $\balpha$  living in an ambient $d$-dimensional space.  In view of what precedes,  it is also natural to  
start directly  with a general line $\bl=(\ell_1, \cdots, \ell_{d+1})$ in $\mathbb{R}^{d+1}$ and to  have an algorithm  with such a line as an input.  But then, there  is no canonical way 
 to    `projectivize' the algorithm, i.e.,
   to go from the line $\bl$ to a   $d$-dimensional vector 
   $\alpha$ working in a  compact set $X$  on which a dynamical system  $T$  acts. 
 One can set e.g.   $\alpha_i=\ell_i/\ell_{d+1}$ for $i=1,\cdots,d$ (e.g. if the line belongs to the positive cone of $\mathbb{R}^{d+1}$  and  if $\ell_{d+1}$ is the largest entry).
Usual ways  to go from  some ${\bl}\in \mathbb{R}_+^{d+1}$
to some  $\balpha \in  [0,1]^d$   
consists in  setting $\ell_{d+1}=1$, and  working with entries $\ell_i \in [0,1]^d$ for $1 \leq i \leq d$,
 or   else, in working with the simplex  $\sum_{ i=1}^{d+1} \ell_i =1$, with $\ell_i  \geq 0 $ for all $i$.  See for instance \cite{SCHWEIGER} for more    details.
 One can also choose to work  directly   on the projective space  ${\mathbb P} ({\mathbb R}^d)$
 by associating with  each element $ [y_1: y_2:  \cdots: y_{d-1}:y_d]$  the representative   defined by $\max y_i=1$ and by working with  projectivizations of  matrices in $GL(d, {\mathbb Z})$. This possibility  of having    different choices for a same piecewise $d+1$-dimensional linear  map   explains the abundance of  existing algorithms (as illustrated in Section \ref{subsec:zoo}).


 Let us discuss   now the   possible steps of the algorithms, i.e.,  the operations that  can be  performed  on the bases together with the  choices  allowed for  partial quotient matrices $A^{(n)}$. 
 An algorithm is said to be {\em additive} if all the  matrices   belong to a  finite set, i.e., the  map $A$ from (\ref{eq:MA}) takes finitely  many values. As an illustration, the additive version of the Gauss map is given by 
the Farey map
$$ x \mapsto \begin{cases}
\displaystyle \frac{x}{1-x} & \text{ for } 0 \leq x \leq 1/2, \\
\\
\displaystyle \frac{1-x}{x} & \text{ for }1/2 \leq x \leq 1.
 \end{cases}$$
Dynamically, this creates    non-trivial changes; indeed,  the   Gauss map is known  to have  a  finite ergodic invariant measure,
which is not the case for the Farey map.


One convenient way to get an additive algorithm is to   restrict  the range  of the map $A$  to the set  of elementary and   permutation matrices  with  entries in $\{0,1\}$.  A matrix is called elementary if it has $1$'s on the diagonal, one  entry  equal to $1$  elsewhere, and all other entries equal to $0$.
In geometric terms (still following  the  geometric formalism from \cite{BRENTJES})  the allowed   operations   on  the bases at each step $n$ are  of elementary types (they correspond
to integer  transvections):
 for  every $n$, there exist  $i\neq j$ (with $i,j$ depending on $n$)   and $c^{(n)} \in \mathbb{N}$ such  that

\begin{equation}\label{eq:brentjes}
{\bf b}_i ^{(n+1)}={\bf b}_i^{(n)}+  c^{(n)} {\bf b}_j ^{(n)},  \  {\bf b}_k ^{(n+1)}={\bf b}_k ^{(n)} \mbox{ for } k \neq i.
\end{equation} This restriction 
is not a  severe one and most of the algorithms discussed in the present  survey   enter this framework, by allowing also
 permutation rules  between the vectors.
 Algorithms  for which the choice of the coefficients $i,j$ and 
 $c^{(n)}$ depend  only on
  the cofactors  of $\bl$ with respect to  ${\bf b}^{(n)}$, i.e.,   the integers  $a_i^{(n)}$
   such that
   $$\bl = a _1 ^{(n)}  {\bf  b }_1^{(n)}+\cdots+ a _{d+1}  ^{(n)} {\bf  b}_{d+1} ^{(n)}, $$  are called {\em vectorial} in \cite{BRENTJES}. They are memory-less  algorithms. In particular,  with the notation of (\ref{eq:MA}), if, for some $\balpha$ the matrix $A(\balpha)$ is  equal  to  an elementary matrix,  then the entries of
   $T(\balpha,1)$ are obtained (modulo the  renormalization  produced by the map $\theta$)  by subtracting  an  entry from another one; it is obtained by performing subtractions.

The terminologies   additive vs.~multiplicative or  division vs.~subtractive  algorithm are commonly  used:  see e.g.~\cite{BRENTJES} where an algorithm is said to be 
subtractive if $c^{(n)}=1$ in (\ref{eq:brentjes}), and additive  if $c^{(n)}$ is   chosen as the maximal possible number  allowing
the line to stay within the positive cone generated by the convergent vectors ${\bf b}_i  ^{(n)}$ for $i=1,\dots,d$.  Additive  and multiplicative versions for a same type of rule can lead  to  very distinct behaviors. See for instance  \cite{BRENTJES}, where it is shown that the multiplicative form  of Selmer's algorithm is not able to be accelerated.
The underlying cause of this is the fact that  the group  of matrices generated  by positive transvections and permutations is not commutative.



\subsection{The effectiveness of ergodic theory}\label{subsec:dyn}
Having an  underlying dynamical system offers a wide  range of  mathematical  tools, including   ergodic theory and   thermodynamic formalism via  transfer operators. 
Ergodic theorems  describe the limiting behavior of    ergodic sums of the form    $  \frac{1}{n} \sum_{k=0}^{n-1}
f \circ T^k  $.   In  probabilistic terms,
 the random variables  $ f \circ T^n $ 
satisfy the strong law of large numbers under the ergodic hypothesis on $T$.
Ergodic  sums  $\frac{1}{n} \sum_{k=0}^{n-1}
f \circ T^k  $    allow   the expression of a  wide  set of algorithmic  and arithmetic  parameters, and 
ergodic theorems  allow   the understanding of their   mean behaviors.
Besides ergodic theory,   transfer operators   provide  a description of  
 the evolution of  probability density functions, by   transporting  the action of the  map $T$  from a dynamical system to the  densities. If initial conditions  $(x_i)$   for trajectories are distributed according  to a probability density function, then 
the new  collection of points $(T(x_i))$   is  distributed according to a new  probability density function,  obtained by applying   a transfer  operator.

Ergodic theory has been quite an effective tool in understanding the typical behavior of various expansions of numbers and their quality of approximation. To name a few, we mention $\beta$-expansions, L\"uroth series and various one and multi-dimensional continued fractions.

We first illustrate it with the Gauss map. For general ergodic aspects of the Gauss map, see  e.g.  \cite{Bill:78,DaKr02,EW:2011}. The  statistics of occurrences of     partial quotients in   continued fraction expansions are deduced  from  the ergodic theorem applied to the Gauss map, and having an  ergodic absolutely continuous invariant measure   then provides metric results that hold  almost everywhere  with respect to the Lebesgue measure.

To explain this in more detail, let us recall  basic   ergodic properties of the  Gauss map. 
We endow the  dynamical  system $([0,1], T_G)$  with a structure
of   a  measure-theoretic dynamical 
system.   A {\em measure-theoretic dynamical 
system} is defined as  a system
 $(X, T, \mu,{\mathcal B})$,
where  $\mu$ is a probability measure defined on  the $\sigma$-algebra 
${\mathcal B}$
of subsets of  $X$,
and $T: X \rightarrow X$  is a measurable map  which
preserves the measure $\mu$, i.e.\ $\mu(T^{-1} (B))=\mu(B)$ for all $B \in {\mathcal B}$. In this case one also says that the measure $\mu$ is {\em $T$-invariant}.  Here,   we  endow $([0,1], T_G)$  with
the Gauss measure $\mu_G$ which  is   a  Borel probability measure   absolutely continuous 
with respect to the Lebesgue measure defined via the following density function
$$\mu_G = \frac{1}{\log 2} \int   \frac{1}{1+x} {dx}.$$
One checks that this measure is $T_G$-invariant. The  Gauss map  is   {\em ergodic} with respect to the Gauss measure, that is, 
every  Borel subset $B$ of $[0,1]$ such that
$T_G^{-1}(B)=B$ has  either zero  or full measure.
This implies that   almost all  orbits are dense in $[0,1]$.
Ergodicity yields furthermore  the following striking   convergence result. 
   Indeed, measure-theoretic ergodic dynamical systems satisfy  {\em Birkhoff's ergodic
 theorem},
also called {\em individual ergodic theorem}, which relates spatial means to temporal means.
\begin{theorem}[Birkhoff's ergodic theorem] \label{ch0:the:Birkhoff}
Let $(X, T,
    \mu,{\mathcal B})$ be an ergodic  measure-theoretic dynamical system.
Let $ f\in L^1(X,\mathbb{R}) $.   Then  the sequence 
$ (\frac{1}{n} \sum_{k=0}^{n-1}
f \circ T^k )_{n \ge 0}$ converges  a.e.  to  $ \int_X f\, d\mu$:

$$\forall f\in L^1(X,\mathbb{R}) \ , \ \ \ \frac{1}{n} \sum_{k=0}^{n-1}
f \circ T^k \xrightarrow[ n \rightarrow \infty]{\mu - a.e.}
\int_{X} f\, d\mu \ .$$
\end{theorem}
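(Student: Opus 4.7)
The plan is to prove the result along the classical route through the Maximal Ergodic Theorem. Write $S_n f := \sum_{k=0}^{n-1} f \circ T^k$ for $f \in L^1(X,\mu)$ and set
$$\bar f(x) := \limsup_{n \to \infty} \frac{S_n f(x)}{n}, \qquad \underline f(x) := \liminf_{n \to \infty} \frac{S_n f(x)}{n}.$$
The first observation is that both $\bar f$ and $\underline f$ are $T$-invariant $\mu$-almost everywhere. This follows from the identity $S_{n+1} f(x) = f(x) + S_n f(T(x))$, so that $\frac{1}{n+1} S_{n+1} f(x) - \frac{n}{n+1} \cdot \frac{1}{n} S_n f(T x) = \frac{f(x)}{n+1} \to 0$ on the full-measure set where $f$ is finite. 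By ergodicity of $T$, every measurable $T$-invariant function is $\mu$-a.e.\ constant, so $\bar f \equiv c$ and $\underline f \equiv c'$ for some $c, c' \in [-\infty, +\infty]$ with $c' \le c$.

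The technical heart of the argument is the following maximal inequality, which I would establish next: for any $g \in L^1(X,\mu)$ and any $N \ge 1$, setting $M_N := \max_{0 \le k \le N} S_k g$ (with $S_0 g := 0$) and $E_N := \{M_N > 0\}$, one has $\int_{E_N} g \, d\mu \ge 0$. The standard short argument (due to Garsia) proceeds by noticing that on $E_N$ one may write $g \ge M_N - M_N \circ T$, since $M_N$ is the best candidate to beat by adding $g$; integrating over $E_N$ and using $T$-invariance of $\mu$ lets the two $M_N$ terms cancel, up to a non-negative remainder coming from the complement of $E_N$. Passing $N \to \infty$ via monotone convergence on the increasing union $E := \{\sup_{n \ge 1} S_n g > 0\}$ yields $\int_E g \, d\mu \ge 0$, and the same inequality restricted to any $T$-invariant set $B$ gives $\int_{B \cap E} g \, d\mu \ge 0$.

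I would then apply this inequality to $g := f - (c + \varepsilon)$ for arbitrary $\varepsilon > 0$. The set $B_\varepsilon := \{\bar f > c + \varepsilon\}$ is $T$-invariant, and on $B_\varepsilon$ we have $\sup_n \frac{1}{n} S_n g > 0$, so $B_\varepsilon$ lies inside the maximal set attached to $g$. By ergodicity $\mu(B_\varepsilon) \in \{0,1\}$; if $\mu(B_\varepsilon) = 1$ then the maximal inequality applied globally forces $\int_X (f - c - \varepsilon)\, d\mu \ge 0$, i.e.\ $c + \varepsilon \le \int f \, d\mu$. Since $\bar f \equiv c$ the set $B_\varepsilon$ is a.e.\ empty once $\varepsilon > 0$, so iterating and letting $\varepsilon \to 0$ gives $c \le \int f \, d\mu$. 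A symmetric argument applied to $-f$ (equivalently to $g := (c' - \varepsilon) - f$ on the invariant set $\{\underline f < c' - \varepsilon\}$) yields $\int f\, d\mu \le c'$. Combined with $c' \le c$ this forces $c = c' = \int f\, d\mu$, which gives simultaneously the a.e.\ convergence and the identification of the limit.

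The main obstacle is the maximal inequality itself: its combinatorial content is subtle, since one must engineer the pointwise telescoping $g \ge M_N - M_N \circ T$ on $E_N$ carefully so that integration against the $T$-invariant measure cleanly eliminates the unbounded $M_N$ terms. Everything else, including the reduction to two constants via $T$-invariance of $\bar f$ and $\underline f$ and the final comparison with $\int f \, d\mu$, is a routine consequence of ergodicity once the maximal theorem is in hand.
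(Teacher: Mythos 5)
The paper itself gives no proof of this statement --- it recalls Birkhoff's theorem as a classical result --- so your proposal can only be judged on its own merits. Your route is the standard textbook one (invariance of $\bar f$ and $\underline f$, reduction to constants $c, c'$ by ergodicity, Garsia's maximal ergodic theorem), and your treatment of the first two ingredients is essentially correct: the telescoping identity does give $T$-invariance of $\bar f$ and $\underline f$, and your sketch of the maximal inequality $\int_{E_N} g\, d\mu \ge 0$ and its passage to $E = \{\sup_{n\ge 1} S_n g > 0\}$ is the right argument (though the limit $\int_{E_N} g\, d\mu \to \int_E g\, d\mu$ uses dominated convergence, with dominating function $|g|$, rather than monotone convergence).

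The final step, however, contains a genuine gap: your thresholds sit on the wrong side of the constants. You apply the maximal inequality to $g = f - (c+\varepsilon)$ on $B_\varepsilon = \{\bar f > c + \varepsilon\}$. Since $\bar f \equiv c$ a.e., $\mu(B_\varepsilon) = 0$ for every $\varepsilon > 0$ --- as you yourself observe --- so the branch of your dichotomy that would produce $c + \varepsilon \le \int_X f\, d\mu$ (namely $\mu(B_\varepsilon) = 1$) never occurs, and a maximal inequality over a null set says nothing about $\int_X f\, d\mu$. The sentence ``iterating and letting $\varepsilon \to 0$ gives $c \le \int f\, d\mu$'' is therefore a non sequitur: at that point no inequality between $c$ and $\int_X f\, d\mu$ has been established, and the same defect breaks the symmetric step for $c'$. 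The repair is to take any real $\lambda < c$ and $g = f - \lambda$. Then $\{\bar f > \lambda\}$ has \emph{full} measure and is contained mod $\mu$ in $E$, so your inequality gives $\int_X (f - \lambda)\, d\mu = \int_E (f-\lambda)\, d\mu \ge 0$, i.e.\ $\int_X f \, d\mu \ge \lambda$; letting $\lambda \uparrow c$ yields $\int_X f\, d\mu \ge c$, and incidentally shows $c < +\infty$ --- a point your version silently needs, since $g = f - (c+\varepsilon)$ is not even defined when $c = +\infty$. Symmetrically, any real $\lambda > c'$ applied to $g = \lambda - f$ gives $\int_X f\, d\mu \le c'$, and together with $c' \le c$ this forces $c = c' = \int_X f\, d\mu$, completing the proof.
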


For example in the case of continued fractions, using this theorem one is able to describe the distribution of the digits.

\begin{theorem}
For a.e.~$\alpha \in [0,1]$ the digit $j$ occurs  in the continued fraction expansion of $\alpha$ with density $\displaystyle\frac{1}{\log 2}  ( 2 \log (1+j) -\log j - \log (2+j))$.
\end{theorem}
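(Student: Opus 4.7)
The plan is to apply Birkhoff's ergodic theorem (Theorem \ref{ch0:the:Birkhoff}) to the indicator function of a suitable cylinder set, using the fact that the Gauss map $T_G$ is ergodic with respect to the Gauss measure $\mu_G$, both of which are recalled just above the statement.

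First I would identify the event ``the first partial quotient equals $j$'' as a subinterval of $[0,1]$. Since $a_1(\alpha) = \lfloor 1/\alpha \rfloor$, we have $a_1(\alpha) = j$ if and only if $\alpha \in I_j := (1/(j+1),\, 1/j]$. Moreover, by the formula $a_n(\alpha) = a_1(T_G^{n-1}(\alpha))$ one gets
\[
\#\{1 \le k \le n : a_k(\alpha) = j\} \;=\; \sum_{k=0}^{n-1} \mathbf{1}_{I_j}\bigl(T_G^k(\alpha)\bigr),
\]
so the density in which $j$ appears is the Birkhoff average of $f = \mathbf{1}_{I_j} \in L^1([0,1],\mu_G)$.

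Second, I would invoke Theorem \ref{ch0:the:Birkhoff} applied to the ergodic system $([0,1], T_G, \mu_G)$. This yields, for $\mu_G$-almost every $\alpha$ (equivalently, for Lebesgue-a.e.\ $\alpha$, since $\mu_G$ is equivalent to Lebesgue measure),
\[
\lim_{n\to\infty} \frac{1}{n} \sum_{k=0}^{n-1} \mathbf{1}_{I_j}\bigl(T_G^k(\alpha)\bigr) \;=\; \mu_G(I_j) \;=\; \frac{1}{\log 2} \int_{1/(j+1)}^{1/j} \frac{dx}{1+x}.
\]

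Finally, I would evaluate the integral:
\[
\int_{1/(j+1)}^{1/j} \frac{dx}{1+x} \;=\; \log\!\left(1 + \tfrac{1}{j}\right) - \log\!\left(1 + \tfrac{1}{j+1}\right) \;=\; \log \frac{(j+1)^2}{j(j+2)} \;=\; 2\log(1+j) - \log j - \log(2+j),
\]
which, divided by $\log 2$, is the claimed density. There is really no significant obstacle here: every ingredient (ergodicity of $T_G$ w.r.t.\ $\mu_G$, absolute continuity of $\mu_G$ w.r.t.\ Lebesgue, and Birkhoff's theorem) is available from the preceding discussion; the only care needed is the small point that the countable union of $\mu_G$-null sets arising from varying $j \in \mathbb{N}$ is still $\mu_G$-null, so the ``a.e.'' statement holds simultaneously for all digits.
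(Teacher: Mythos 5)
Your proposal is correct and follows exactly the route the paper intends: the statement is presented there as a direct application of Birkhoff's ergodic theorem (Theorem \ref{ch0:the:Birkhoff}) to the Gauss map with the Gauss measure, which is precisely your argument of taking Birkhoff averages of the indicator of the cylinder $I_j=(1/(j+1),1/j]$ and computing $\mu_G(I_j)$. Your additional remark about intersecting countably many full-measure sets to get the statement simultaneously for all digits $j$ is a correct (and welcome) piece of care that the paper leaves implicit.
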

One can even  show the digits  have the mixing property, as well as give a short and elegant proof of L\'evy's theorem which states that 
\begin{equation}\label{eq:levy}\lim_{n\to\infty}\frac{1}{n}\log q_n(x)=\frac{\pi^2}{12\log 2}
\end{equation}for Lebesgue almost every point $x$. With the help of a dynamical construction called the {\em natural extension} (which is basically a way to make the dynamics invertible), one can describe the asymptotic behavior of the approximation coefficients $$\Theta_n(x)=q_n^2|x-\frac{p_n}{q_n}|.$$  As an illustration,  we mention the ergodic proof of Bosma, Jager and Wiedijk (\cite{BJW}) of the famous Doeblin-Lenstra conjecture which says that for almost all $x$ the limit
\[
\lim_{n\rightarrow \infty} \frac{1}{n} \# \{ 1\leq j\leq n\, : \,
\Theta_j(x)\leq z \} \, ,\, {\mbox{ where }} 0\leq z\leq 1,
\]
exists and equals the distribution function $F(z)$ given by
\begin{equation}\label{distributionfunction}
F(z)=\begin{cases}
        \dfrac{z}{\log 2} & 0\leq z\leq \frac{1}{2},\\
         & \\
        \dfrac{1}{\log 2} (1-z+ \log 2z) & \frac{1}{2} \leq z \leq 1.
    \end{cases}
\end{equation}

In \cite{Jag}, Jager used ergodic tools to describe the simultaneous distribution of two consecutive $\Theta$'s by determining  for almost all $x$ the exact value of
\[
\lim_{n\rightarrow \infty} \frac{1}{n} \# \{ 1\leq j\leq n\, : \,
\Theta_{j-1}(x)\leq z_1 , \, \Theta_{j}(x)\leq z_2 \} ,\, {\mbox{ where }} 0\leq z_1,z_2\leq 1.
\]

As a third example we mention that it can be proven with the help of ergodic theory that for each real irrational number $x$ and each integer $n\geq 1$ one has
\begin{equation}\label{borel}
\min (\Theta_{n-1}, \Theta_n, \Theta_{n+1})\, <\, \frac{1}{\sqrt{a_{n+1}^2+4}}
\end{equation}
and
\begin{equation}\label{tong}
\max (\Theta_{n-1}, \Theta_n, \Theta_{n+1})\, >\, \frac{1}{\sqrt{a_{n+1}^2+4}}\, .
\end{equation}
For a generalization  of these results in the  setting  of best approximations, see \cite{CC:19,CC:23}, and also Section  \ref{sec:LLL}. Inequality~(\ref{borel}) is a generalization of a result by Borel~\cite{Bor}, which states
that
$$
\min (\Theta_{n-1}, \Theta_n, \Theta_{n+1})\, <\, \frac{1}{\sqrt{5}}\, .
$$

A great number of people independently found~(\ref{borel}), see for
example~\cite{Obr,BMcL,Sen}. Inequality~(\ref{tong}) is due to
Tong~\cite{Tong}. In fact, ergodic theoretic methods yield easy
proofs of generalizations of a great number of classical results by
Fujiwara, Segre and others like LeVeque, Sz\"{u}zs, and Segre;
see~\cite{JK,DaKr02} for more results and details.

In a different direction, ergodic theoretic methods can also be used to prove (generalizations of) Lochs' theorem which compares the amount of information given by the digits of different types of number expansions. The original statement of Lochs \cite{Lochs} compared the regular continued fraction digits to decimal digits. For an $x \in (0,1)$ let $(a_k)_{k \ge 1}$ denote its regular continued fraction digits and $(d_k)_{k \ge 1}$ its decimal digits and for each $n \ge 1$ let $m_n(x)$ denote the largest number of digits $a_k$ that can be determined from knowing $d_1, \ldots, d_n$. Then Lochs proved the following statement.

\begin{theorem}
For a.e.~$x \in (0,1)$, $\lim_{n \to \infty} \frac{m_n(x)}{n} = \frac{6\log 2 \log 10}{\pi^2}$.
\end{theorem}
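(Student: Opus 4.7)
My plan is to relate the sizes of cylinders in the two systems and to invoke L\'evy's theorem \eqref{eq:levy} on the asymptotic growth of the CF denominators. I will write $D_n(x) = [\lfloor 10^n x\rfloor/10^n,\, (\lfloor 10^n x\rfloor +1)/10^n)$ for the decimal cylinder of level $n$ containing $x$, which has length exactly $10^{-n}$, and $C_k(x) = \{y : a_i(y) = a_i(x) \text{ for } 1 \leq i \leq k\}$ for the CF cylinder of level $k$. Using the standard identity $|C_k(x)| = 1/(q_k(q_k+q_{k-1}))$ together with \eqref{eq:levy},
\[
\lim_{k \to \infty} -\frac{1}{k}\log|C_k(x)| = \frac{\pi^2}{6\log 2}\qquad \text{for a.e.\ } x.
\]
Observe that $m_n(x)$ is, by definition, the largest $k$ such that $D_n(x) \subseteq C_k(x)$.

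For the upper bound, I will exploit $D_n(x) \subseteq C_{m_n(x)}(x)$, which forces $10^{-n} \leq |C_{m_n(x)}(x)|$. Taking logarithms, dividing by $n$, and applying the displayed limit at $k = m_n(x)$ gives
\[
\limsup_{n\to\infty} \frac{m_n(x)}{n} \leq \frac{6\log 2 \log 10}{\pi^2}\qquad \text{for a.e.\ } x,
\]
provided one first knows that $m_n(x) \to \infty$, which will follow from the lower bound below.

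For the matching lower bound, I will fix $\epsilon > 0$ and set $k_n = \lfloor (1-\epsilon) \cdot 6\log 2 \log 10 \cdot n/\pi^2\rfloor$. The displayed limit guarantees $|C_{k_n}(x)| \geq 10^{-n(1-\epsilon/2)}$ for all large $n$ on a set of full measure. It then suffices to show that $D_n(x) \subseteq C_{k_n}(x)$ for all large $n$; since both sets contain $x$, this is equivalent to $d(x,\partial C_{k_n}(x)) \geq 10^{-n}$. I would let $B_n$ denote the set of $x$ with $|C_{k_n}(x)| \geq 10^{-n(1-\epsilon/2)}$ but $d(x,\partial C_{k_n}(x)) < 10^{-n}$, and estimate as follows: since the number of CF cylinders of level $k_n$ of Lebesgue measure at least $10^{-n(1-\epsilon/2)}$ is at most $10^{n(1-\epsilon/2)}$, and each such cylinder contributes at most $2\cdot 10^{-n}$ of Lebesgue measure to the ``near-the-boundary'' set, we get $\lambda(B_n) \leq 2 \cdot 10^{-n\epsilon/2}$, where $\lambda$ is the Lebesgue measure. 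This is summable in $n$, so the Borel--Cantelli lemma and the equivalence of the Gauss measure with Lebesgue imply that a.e.\ $x$ lies outside all but finitely many $B_n$. Combined with the size estimate from L\'evy, this yields $\liminf_n m_n(x)/n \geq (1-\epsilon) \cdot 6 \log 2 \log 10 / \pi^2$, and letting $\epsilon \to 0$ along a countable sequence finishes the proof.

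The main technical obstacle is converting the ``typical size'' of the CF cylinder (a consequence of L\'evy) into a ``typical position'' of $x$ inside it; the Borel--Cantelli argument above accomplishes this via a volume count, avoiding any case-by-case analysis of cylinder boundaries. An alternative and conceptually cleaner route would invoke Shannon--McMillan--Breiman for the Gauss map, whose metric entropy equals $\pi^2/(6\log 2)$ by Rokhlin's formula applied to the Gauss measure, making the ratio $h_{\mathrm{dec}}/h_G = \log 10 / (\pi^2/(6\log 2))$ appear intrinsically; but the elementary route described above suffices once L\'evy's theorem is in hand.
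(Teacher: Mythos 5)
Your proof is correct, but there is nothing in the paper to compare it against: this is a survey, and the theorem is stated as Lochs' classical result with a citation to \cite{Lochs} for the original argument and to \cite{bosma99,DajaniFieldsteel} for its dynamical treatment; no proof is given. The only methodological hint in the surrounding text is the entropy interpretation $h(T_G)=\pi^2/(6\log 2)$, $h(T_{10})=\log 10$, i.e.\ precisely the Shannon--McMillan--Breiman route that you mention at the end as the ``conceptually cleaner'' alternative. Your actual argument takes a more elementary path anchored in L\'evy's theorem \eqref{eq:levy}, which the paper does record: the identity $|C_k(x)|=1/(q_k(q_k+q_{k-1}))$ turns \eqref{eq:levy} into a.e.\ decay of cylinder lengths at exponential rate $\pi^2/(6\log 2)$; the upper bound then follows from $10^{-n}\le |C_{m_n(x)}(x)|$, and the lower bound from your Borel--Cantelli count, which correctly bounds the measure of the set of points lying within $10^{-n}$ of the boundary of a large level-$k_n$ cylinder by $2\cdot 10^{-n\epsilon/2}$, a summable quantity. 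Both halves are sound. Two small remarks: (i) $D_n(x)\subseteq C_{k_n}(x)$ is \emph{implied by}, not equivalent to, $d(x,\partial C_{k_n}(x))\ge 10^{-n}$, since $D_n(x)$ is not centred at $x$ --- but you only use the sufficiency direction, which is the correct one; (ii) the fact $m_n(x)\to\infty$ needed in the upper bound does not have to wait for the lower bound: for every irrational $x$ the cylinder $C_k(x)$ is an interval with rational endpoints containing $x$ in its interior, so $D_n(x)\subseteq C_k(x)$ for all large $n$. As to what each approach buys: your L\'evy-plus-Borel--Cantelli argument is self-contained given \eqref{eq:levy} and avoids entropy theory altogether, whereas the entropy/SMB route alluded to in the paper and carried out in \cite{bosma99,DajaniFieldsteel} is what generalises Lochs' theorem to comparisons between essentially arbitrary pairs of expansions (including the zero-entropy and random extensions the survey cites), which is the direction the paper emphasises.
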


See \cite{LiWu,BarreiraIommi} for similar results for other types of expansions. Lochs' theorem was placed in a dynamical setting in \cite{bosma99} and developed further in \cite{DajaniFieldsteel}. It became apparent that the number $\frac{6\log 2 \log 10}{\pi^2}$ is related to the measure theoretic entropies $h (T_G) = \frac{\pi^2}{6\log 2}$ and $h(T_{10}) = \log 10$ of the transformations $T_G$ and $T_{10}(x) = 10 x \pmod 1$ that generate regular continued fractions and decimal expansions, respectively. Later Lochs' theorem was extended and generalised in several directions, e.g.~in \cite{KVZ} for random dynamical systems and in \cite{ValerieLochs} to include systems with zero entropy.

\bigskip

Let us consider now  the case of  multidimensional continued fraction  algorithms. For a general   description of their   ergodic  properties, see  \cite{SCHWEIGER}. A first step in the ergodic study of a continued fraction algorithm   consists of proving the existence of the  so-called a.c.i.m (absolutely  continuous  invariant measure).    Note that a  very efficient way to find a.c.i.m.'s for continued fraction transformations is via the natural extensions; see e.g. 
   \cite{ArnouxNogueira93,AL18,ArnouxSchmidt:19,EINN19,KLMM20}. 
   
Let us now consider   more closely convergence properties from a dynamical viewpoint. 
 Lyapounov exponents of  classical algorithms such as  the  Jacobi--Perron  algorithm or the   Brun algorithm  have been thoroughly studied in       \cite{BROISEJP,Broise:01}; see also \cite{FS:21} for the simplicity of the spectrum.
In particular, the a.e.   exponential   (strong) convergence of the Brun   \cite{FUKO,Meester:99,Schratz:01} and  Jacobi--Perron algorithm \cite{Broise:01} (see also  \cite{Lagarias:93,Schweiger96}) holds in dimension $d=2$:
there exists  $\delta >0$  s.t. for a.e.   $(\alpha_1,\alpha_2)$,  there  exists
  $n_0=n_0 (\alpha, \beta)$  s.t.
    for all  $n \geq n_0$
  $$|\alpha_1- p^{(n)}_1/q^{(n)}| < \frac{1}{(q^{(n)}) ^{1+\delta}}, \  
  |\alpha_2- p^{(n)}_2/q^{(n)}| < \frac{1}{(q^{(n)}) ^{1+\delta}},$$
  where 
  $p^{(n)}_1,p^{(n)}_2, q^{(n)}$    are given by the Brun (resp. Jacobi--Perron) algorithm.

We have seen already in Section \ref{subsec:conv} that the quality of rational approximations provided by continued fraction algorithms obtained dynamically by an iteration of a  map  can be    expressed in terms  of the two first Lyapunov exponents.  There is, in fact,  a strong link   with the uniform approximation exponent as shown in  \cite{Lagarias:93}; see  also \cite[Theorem~1]{HK00}, \cite[Proposition~4]{Baldwin:92}
and   \cite[Section 2]{berth2021second}. One considers  as in (\ref{eq:MA}) the  maps $T: X \rightarrow X$, $A: X \rightarrow  \mathrm{GL} (d+1 ,{\mathbb Z}).$ 
For fixed $\balpha\in  X $ and $i\in\{1,\ldots,d+1\}$, we have 
\[
\eta_A^*(\balpha,i) = \sup
\bigg\{
\delta >0 \;:\; \exists\, n_0=n_0(\balpha,i,\delta) \in\mathbb{N} \hbox{ s.\ t.\ }\forall \, 
n\ge n_0,\; \bigg\Vert  \balpha-\frac{\mathbf{p}_i^{(n)}}{q_i^{(n)}} \bigg\Vert < (q_i^{(n)})^{-\delta}
\bigg\},
\]
where $\Vert\cdot\Vert$  is an arbitrary norm in $\mathbb{R}^d$. The quantity
\[
\eta^*_A(\bx) = \min_{0\le i\le d} \eta_A^*(\bx,i)
\]
is called the \emph{uniform approximation exponent} for $\balpha$ using the algorithm $A$. Now, following \cite[Theorem~4.1]{Lagarias:93},  we consider a  $d$-dimensional multidimensional continued fraction algorithm $A$  satisfying some mild ergodic  conditions (called (H1) to (H5) in \cite{Lagarias:93}).
Let $\eta^*_A$ be the uniform approximation exponent of  $A$. We have $\lambda_1(A) > \lambda_2(A)$ and 
\[
\eta^*_A(\mathbf{x}) = 1- \frac{\lambda_2(A)}{\lambda_1(A)}
\]
holds for almost all $\mathbf{x}\in X$. In particular, if $\lambda_2(A) < 0$ then $A$ is a.e.\ strongly convergent.

 The computation of Lyapunov exponents    is   a challenging problem.   For numerical  results, see \cite{berth2021second}.
In practice, ergodic theorems  provide
efficient  ways of   estimating Lyapunov exponents   numerically by following   trajectories and then  taking averages over truncated  trajectories.
This has been developed   e.g.  in \cite{HarKha2,BalNog} for continued fractions   or  in \cite{Zorich} for interval exchanges.
Transfer operators  are   efficient ways to reach them, such as developed  e.g.  in \cite{Pol:2010,JenPoV}, by getting  in some cases  exact computations for the top Lyapunov exponents of random products of matrices using transfer operators.   Indeed, transfer operators 
come with   the   analogue of   Perron--Frobenius    theory for non-negative matrices. They  are  well suited to  provide approximations  by    working   in   finite  dimension, for instance  by    truncating  Taylor expansions of analytic functions \cite{DFV:97,Lhote}. 
  See also  \cite{ST:18}  for  lower and upper bounds for 
    the  Lyapunov exponents for random products of   square matrices  
     with determinant $1$ having real  and   non-negative entries.

\subsection{Classical examples}\label{subsec:zoo}

With this section we want  to illustrate the variety of 
continued fractions algorithms   defined according to the formalism of Section \ref{subsec:dyn}, as well as to focus on   two classical algorithms,  namely the Brun and the Jacobi--Perron algorithms.

Consider an  algorithm based on elementary matrices, i.e.,
on   subtractions, such as described in Section \ref{subsec:dyn}.
 In order to stress the  simple 
rules  that govern  them, we   express them in dimension $d+1=3$. We thus  start  with parameters 
$(\ell_1,\ell_2,\ell_3) \in \mathbb{R}^3_+$. We have to decide which   number has to be subtracted, and with respect  to  which number it   has to be done.
Usually   numbers  $\ell_1,\ell_2,\ell_3 $ are sorted in  increasing (or decreasing)  order. We  stress the subtraction rule  but it is usually preceded and  followed by a sorting operation.

For Jacobi--Perron, we   subtract the second entry  from the  other ones, with the entries being  ordered  in such a way that the first entry is the largest one.
For Brun, we subtract the second largest from the largest one.
 For Poincar\'e,  we subtract  the second largest entry from the largest one,  the third largest from the second largest, etc.   For  Selmer, 
 we subtract the smallest  (positive) entry   from the largest  one. For the  Fully subtractive
 algorithm, 
we subtract the smallest   (positive entry)   from all  the larger  ones.



 We  also recall that given an algorithm described at the level of 
 a $d+1$-dimensional space, there exist  several possible projectivizations.
 This is the case of the various forms  taken by the  Brun algorithm.  In particular, the Brun algorithm is also called  modified Jacobi--Perron algorithm:  the modified Jacobi--Perron algorithm
introduced by Podsypanin
in \cite{Pod:77} 
 is a two-point extension
of the Brun  algorithm.

Consider now the Jacobi--Perron algorithm (that will be handled in more details  with  respect to the nearest integer part
in Section \ref{sec:ni}). 
The linear form of the Jacobi--Perron
algorithm is defined on $\{(y_0,y_1,y_2)\in\mathbb{R}^3 \setminus\{\mathbf{0}\}:\, \ y_0  \geq  y_1,y_2 \geq 0\}$ by
$$(y_0,y_1,y_2) \mapsto (y_1,y_2-\lfloor y_2/y_1\rfloor y_1,y_0-\lfloor y_0/y_1\rfloor y_1).$$
If we  set 
$$x_1:=y_1/y_0 , \  x_2:= y_2/y_0,$$
 we recover its     {projective}  version defined on
$[0,1]^2$ as
$$(x_1,x_2) \mapsto \left( \{x_2/x_1\}, \{1/x_1\}\right).$$

  Let us stress  the difference  between Brun's  and
 Jacobi--Perron's rule. The Brun algorithm  is  a space-ordering algorithm according to the terminology introduced  in \cite{HK01}. 
 (Note that it is called   ordered Jacobi--Perron  in \cite{HK00}.)  Furthermore, each step  of the Brun   algorithm produces only  one  digit.
 This    helps in  computing the   natural extension and  the invariant measure of the Brun  algorithm
 (see e.g. \cite{ArnouxNogueira93}). Contrary to the Brun  algorithm,  the role played  by  $y_1$ and $y_2$ (in the description above)  is not determined  by  a comparison
between both  parameters in the Jacobi--Perron case; this might  explain the fact that  an explicit  expression of the   natural extension  of this   algorithm is  still not known.
Nevertheless,   the framework of $S$-expansions and the  so-called techniques
of  Insertion and  Singularization  (see \cite{Kraai02}) allow     one to relate both algorithms  as  shown in \cite{Schratz:07}; see also \cite{Schratz:08}.
Both   algorithms (Brun and Jacobi--Perron)
are  known 
 to have an invariant  ergodic  probability measure equivalent 
to the Lebesgue measure (see for instance 
\cite{Schweiger90} and  \cite{SCHWEIGER}).
However, this measure is not known explicitly   for Jacobi--Perron
(the density of the measure is shown to be a piecewise analytic function
in \cite{BROISEJP}), whereas it is known explicitly for  Brun  \cite{ArnouxNogueira93,FUKO}.  Note that  the Burn algorithm can be considered as an additive  algorithm \cite{BLV:18}, where it is proved that partial  quotients tend to be  equal to $1$. Lastly, let us quote \cite{Nogueira:01} for 
  Borel-Bernstein type theorems on the growth of partial quotients.

\section{Lattice reduction  and   rational approximations}\label{sec:LLL}

We now focus  on   the second approach  discussed in 
Section \ref{subsec:approx} based on lattice reduction.
Lattice reduction   methods  induce  indeed a     particularly  fruitful way of  exhibiting   good  simultaneous approximations,  or  else  small values
for linear forms. Algorithms based on  lattice  reduction theory are based on the following idea: lattice  reduction algorithms 
do not produce a priori the smallest vector of a lattice, but a reasonably  small  vector. That is, a vector that is      small enough for  guarantying Diophantine
approximation  properties that can be compared with Dirichlet's quality   up to  an approximation factor exponential in the dimension.
We can thus consider  these  algorithms   as providing  effective  versions  of  Dirichlet's theorem, yielding a satisfying 
 compromise between  efficient computation  and sharpness of the    obtained bounds, that is,     between algorithmic issues and   Diophantine quality.

Let us stress the fact that  the range of applications of lattice reduction     is quite wide for the following reasons.  They play a  central algorithmic role  in
   cryptology, computer algebra, integer linear programming and algorithmic  number theory.  They are particularly   versatile  in terms both  of  existing  variants and  algebraic contexts where they can be developed, see e.g.  \cite{FS:10,Camus:17,Napias:96}.  They  are efficient: LLL     has a    polynomial  runtime   with  respect to the dimension). In particular, in the present context,   they  produce efficient gcd algorithms (see e.g. \cite{HMM:98})  and there exist   promising attemps in order to  devise   continued fractions  upon  them (see \cite{Lagarias:94,BosSmeets:13,Beukers}). 
However, there remains much to understand   concerning   their executions  and  the geometry of the outputs \cite{NguyenStehle:06,YD:18}.

 Lattice reduction  is based on the following elementary  basis transformations:
 they  can be described in terms of   size reduction  (the vector ${\bf b}_i$  of the basis
$({\bf b}_1, \ldots, {\bf b}_{d+1})$  is replaced 
by ${\bf b}_i - \lambda {\bf b}_j$ with $1 \leq j <i$),  and  of exchange steps, also called swaps  (one exchanges ${\bf b}_i$ and ${\bf b}_{i+1}$). These operations are decided
with respect to the Gram-Schmitdt orthogonalization   of the  basis ${\bf b}$. 

More precisely, let $({\bf b}_i^*)$ stand for the basis obtained via the Gram-Schmidt orthogonalization from the basis $({\bf b}_1, \ldots, {\bf b}_{d+1})$, i.e., ${\bf b}_i^*$ is the orthogonal projection of ${\bf b}_i$  on the orthogonal of the space generated by 
${\bf b}_i, \cdots, {\bf b}_{i-1}.$ One writes
${\bf b}_i^*={\bf b}_i -\sum _{k=1}^{i-1}\mu_{ik} {\bf b}_k^*$ with $\mu_{ik}=\frac{\langle {\bf b_i},{\bf b}_k^* \rangle}{\langle  {\bf b_k}^*,{\bf b}_k^* \rangle}$ for $k\leq i-1$. A basis $({\bf b}_1,\cdots, {\bf b}_d)$ is said to be LLL-reduced if
\begin{itemize}
    \item  $|\mu_{ik}| \leq 1/2$  for all  $i,k$ with  $1\leq i \leq d+1$ and $k \leq i-1$ (the basis is said {\em proper});
    \item  $3/4 \Vert {\bf b}_i^*  \Vert \leq  \Vert \mu_{i+1,i}{\bf b}_i^*+{\bf b}_{i+1}^*\Vert$ for all $i$ (this condition is called Lovasz' condition).
\end{itemize}
The factor $3/4$ can be replaced by a parameter $t$ with 
$3/4< t < 1$. The LLL algorithm    consists of  two steps.
\begin{itemize}
    \item First, make the basis proper  by replacing ${\bf b}_i$ by ${\bf b}_i - \Vert \mu_{ij} \Vert {\bf b}_j$,  for $j < i$, where  $\Vert \mu_{ij} \Vert $ stands for the distance to the nearest integer;
    \item if for some $i$, Lovasz' condition is not satisfied, then  swap ${\bf b}_{i}$ and ${\bf b}_{i+1}$ and go the  previous step.
\end{itemize}

Recall that we have  sketched  the   basic  strategy underlying the  use of lattice reduction  in this framework in Section \ref{subsec:approx}. One starts with the lattice  $\Lambda_t$ generated by the columns of the  matrix
$$M_t:=\left[ \begin{matrix}
  1&0& \cdots  & 0 & -\alpha_1\\
  0&1&\cdots &0& -\alpha_2\\
    \vdots&\vdots&\ddots&\vdots&\vdots\\
  0&0&\cdots&1&-\alpha_d\\
  0 &0&\cdots&0&  {t}
  \end{matrix} \right].$$
Note that   $\det (M_t)=t$, hence, 
 the   lattice   $\Lambda_t$  changes  at each step of the  algorithm.  One lets $t$ tend to $0$.
 Let us stress the fact that this strategy differs  from the one discussed  in Section \ref{subsec:zoo} where   one worked with    bases of the fixed   lattice
 $\mathbb{Z}^{d+1}$.
 
Lattice reduction algorithms such as LLL then 
perform a succession of permutations and   subtractions on the matrix $M_t$, i.e., it multiplies  the matrix $M_t$ by elementary matrices and  permutation matrices.
This is a common feature between unimodular continued fraction algorithms and algorithms  based on lattice reduction, namely that they are made of a succession of permutations and subtractions. 
The decisions are taken for classical unimodular continued fractions by comparing entries, whereas lattice reduction involves quadratic comparisons in the sense that they depend on the Gram-Schmidt orthogonalization.

For  more on  the way, lattice reduction provides  best approximations 
of a real number,  see  \cite[p.226,267]{LLLbook}, and 
for a  survey on the overall strategy   for getting constructive type results in Diophantine  approximation based  on LLL, see  \cite[p. 222]{LLLbook}.
Nevertheless, note   that  even in dimension 2,   when using the Gauss algorithm whose efficiency has been largely  proved,
 one has  `little control on the convergent   which is returned; in particular,  this is {\em not}  the largest convergent  with denominator less  than 
 $2 \sqrt{C/3}$',  as quoted  in \cite[p.226 Example 1]{LLLbook};  the     bound   $2 \sqrt{C/3}$  comes from Theorem 7 of   \cite[Chapter 6]{LLLbook}.

Several  attempts  already  exist in order to   use lattice reduction to get  simultaneous  approximations.  Let us quote  
\cite{FF:79,Ferguson:87,Lagarias:82,Lagarias:85,Just:89,Just:90,Just:92} for strongly convergent
algorithms; however they do not  present the same advantages    as more   classical  memory-less  algorithms. Let us quote also     \cite{Lagarias:85} and    \cite{Lagarias:94}  based on  Minkowski lattice  reduction.  This  approach is not  effective,  but it   produces best approximations  (which are known to be NP-hard to locate in an interval  \cite{Lagarias:85}).
This study is  extended in \cite{GrabinerLagarias:01} and in \cite{CHEV}. Let us quote also
  \cite{Beukers,BosSmeets:13}   built   upon LLL.  In  \cite{BosSmeets:13} an iterated LLL algorithm is  designed that is obtained by decreasing the  parameter $t$ from  the lattice $\Lambda_t$, by dividing it  by a given constant (including  also experimental data).  In  \cite{Beukers},  
  the conditions  that occur in LLL are  proved   to be  linear in the parameter $t$  (tending to $0$). The idea at step $k$  is to consider the smallest parameter $t_k$  for which $\Lambda_{t_k}$ is reduced, and then perform a reduction with $t_k- \varepsilon$.

 An efficient way to input some dynamics with this  reduction viewpoint is  to rely  on homogeneous dynamics with the (left) action of the diagonal flow  $(g_t)$
 defined by 
 $$\begin{bmatrix}
e^t I_d &0 \\
0 & e^{-dt}
 \end{bmatrix}$$
 on the space of unimodular lattices, i.e., on  the homogeneous space $SL(d+1,{\mathbb R})/ SL(d+1,{\mathbb  Z})$. This  is  a  very fruitful way  to   combine lattice reduction with the strength of dynamical methods
such as highlighted in the survey \cite{CHEV}.  This  amounts to changing the parameter $t$.
See e.g. \cite{KLM}.  This is particularly relevant for  continued fractions defined in terms of  best approximations.
In \cite{CC:19,CC:23}, Levy's result  about the almost sure growth rate of the denominators of the
convergents from \eqref{eq:levy} is extended to the  best Diophantine approximations 
 (see Definition \ref{def:BAd}). The value of the limit is given by an integral 
over a codimension one submanifold  in the space of lattices $SL(d + 1, {\mathbb R})/ SL(d + 1, {\mathbb Z}).$   An analogue of the  Doeblin-Lenstra
discussed in Section \ref{subsec:dyn}  is also given in \cite{CC:19}. As highlighted in \cite{CC:19},
the section is provided by lattices whose first two minima  of lattices are equal and  the first return map of the geodesic  flow in
the transversal play the role of an invertible extension of the missing Gauss map. The main idea is to relate 
shortest and  so-called minimal  vectors of the lattice $\Lambda_t$ with  best approximations (see \cite[Lemma 8]{CHEV})
together with a  suitable choice of a norm   (see \cite[Section 3.2]{CHEV} and \cite{Cheung:11}).

\hide{

In \cite{DKTWY} a study of the LLL algorithm from the perspective of statistical physics by interpreting LLL as a sandpile model. See also \cite{MV:2010}.


A  short description of the LLL algorithm is given in  \cite{BosSmeets:13}. The approach of \cite{Beukers} is  with quadratic forms.  \cite{Beukers} and \cite{BosSmeets:13} work with the LLL algorithm in terms of Diophantine approximation.

See \cite{HJLS:89}   for an interesting discussion on the  connections between   the  approximation    algorithms   given in \cite{Ferguson:87}  (see also \cite{FF:79})
and \cite{LLL:82}). 
See also \cite{FF:79}.}

\section{Some applications of continued fractions}\label{sec:applications}

In this section we give a diverse range of  applications  of  continued fractions in arithmetics, cryptography and symbolic dynamics.
In the next section we then   propose  possible hints for  improvements of  dynamical unimodular continued fraction algorithms.


 We start with arithmetical applications. Continued fractions   play  also an important role in the  arithmetic of algebraic curves.
The relation between the geometry of the elliptic curve $y^2=x^4-6 a x^2-8 b x+c$ and the continued fraction of $y$ with respect to the $x^{-1}$-adic valuation is given in \cite{10.1112/plms/s3-41.3.481} and \cite{Abel+1826+185+221}. The given elliptic curve has two rational points at infinity, call them $P$ and $O$, such that $O$ is the origin of Mordell-Weill group of the elliptic curve. Then the order of  the image of $P$, which is called the infinity divisor of the curve, in the Jacobian is finite if and only if the continued fraction of $y$ is periodic. Moreover, the order depends on the period of the continued fraction. This result has been generalized for the hyperelliptic curves in \cite{Berry}. The torsion order of the infinity divisor is given as the sum of the degrees of all partial fractions from 0  to the period of the continued fraction of $y$ in \cite{10.1112/plms/s3-41.3.481,van2000quasi,Pappalardi2005335}. 
  Furthermore, thanks to the periodic continued fraction of $y$, for a given even degree hyperelliptic curve $y^2=f(x)$,   the Pell equation for $f$  is proved to have a non-trivial solution, i.e., there exist $p, q$ in $k[x], p$ not a constant, such that $p^2-q^2 f=1$, in \cite{10.1112/plms/s3-41.3.481,Berry}.

Cryptography is another field where continued fractions occur in various places. First of all, continued fractions can be used in cryptanalysis, e.g., to attack the RSA cryptosystem. This cryptosystem is based on the mathematical problem of factoring a natural number that is a product of two large prime numbers. It is an asymmetric cryptosystem, i.e., a publicly known key is used for encryption and a secret key is used for decryption. If the secret key is chosen too small, one can use continued fraction expansions to efficiently compute the private key and break the cryptosystem. This attack is known as Wiener's attack (see \cite{wiener}). Apart from cryptanalysis, there is a connection between stream ciphers and continued fraction expansions. Stream ciphers are symmetric cryptosystems, i.e., the same key is used for encryption and decryption. A stream cipher generates a pseudorandom bit string. To encrypt a message, each bit of the pseudorandom bit string is combined with a bit of the secret message (e.g. with the XOR operation). Niederreiter uses continued fraction expansions of generating functions to analyze the randomness of pseudorandom sequences (see \cite{Niederreiter}). In \cite{Kane}, Kane constructs a stream cipher using continued fractions. A rather indirect connection to cryptography is the shared interest in lattice reduction algorithms. As described in Section~\ref{sec:LLL}, these algorithms compute relatively short vectors of lattices. Various cryptosystems such as Frodo, Dilithium and Kyber are based on the problem of finding short vectors in lattices. Hence, the study of lattice reduction algorithms such as the LLL algorithm is important for the security analysis of lattice-based cryptosystems.

We also note that due to its simplicity, the Brun algorithm  appears in various application fields. See \cite{Rooij},  where efficient exponentiation using  addition chains is used.  Note that continued fractions   were already   used for addition chains; see e.g. \cite{BBBD:89}. See also  the  survey \cite{WLR}  for  application of lattice reduction and of the Brun algorithm  in wireless communications and statistical signal processing. 

We now consider  some applications in  (symbolic)  dynamics with the    seminal  example 
of Sturmian dynamical systems, introduced by Morse and  Hedlund   in \cite{Morse2}. There is an impressive    literature devoted to their study
and to their possible generalizations in   word combinatorics, and   in  digital geometry \cite{Rosenfeld}.
This is due to  several factors. 
They provide    symbolic  codings  for  the simplest   arithmetic   systems, namely   the   irrational   translations on the circle, 
they  also code discrete lines and  are  unidimensional   models  of quasicrystals \cite{AperBook}.
Moreover,  the  scale invariance  of Sturmian dynamical  systems
allows their description  using   a renormalization scheme  governed by  usual  continued fractions  via  the geodesic flow acting  on  the modular surface. Renormalization schemes  can  often be interpreted  as continued fractions \cite{Yoccoz}. 
 This was     crystallized later    with   the study  of    interval exchanges in  relation with 
 the Teichm\"uller flow,  through   the work, among others, of Veech,    Masur,  Yoccoz and  Avila. 
Similarly   as  for continued fractions, there is no canonical generalization of  Sturmian  words. 
Episturmian  words, also called Arnoux-Rauzy words,  have  attracted a lot of attention. See in particular
  \cite{Cassaigne-Ferenczi-Messaoudi:08,AHS:16,AHS:16bis}
 for the study of   associated continued fractions  on the Rauzy gasket. 
 It has been a long-standing problem to find good symbolic codings for translations on the $d$-dimensional torus that enjoy the beautiful properties of Sturmian sequences. Symbolic codings in terms of multidimensional continued fraction algorithms  are  defined in \cite{BST22}. In particular, given any exponentially convergent continued fraction algorithm, these sequences lead to renormalization schemes which produce symbolic codings of toral translations. This yields symbolic codings for almost every translation of ${\mathbb T}^2$  \cite{BST22},   and for almost all $3$-dimensional toral translations, paving the way for the development of equidistribution results for the associated  Kronecker sequences.  These results rely on  the   strong properties provided by
 the fact that the  second  Lyapunov exponent is positive;  this explains why   they hold only in small dimensions.

 In view of  the  non-positivity of  the second Lyapunov exponent, there is a  need to design strongly convergent continued fractions  algorithms in high dimensions.
   In fact,   positivity of the second Lyapunov exponent  has only been proved in  dimension $d=2$ for classical algorithms, $d=3$ for the Brun algorithm.
    One  natural approach  for the design of continued fractions consists in trying to derive continued fraction algorithms from  
lattice reduction algorithms using the fact that they compute short vectors and that they reach Dirichlet's bound (up to a constant depending exponentially on the dimension). The design   of continued  fractions  will  go along with   the   dynamical modeling of   lattice reduction algorithms  (and more specifically  LLL) for their probabilistic analysis.  There is  also a need to improve existing algorithms  of a dynamical nature
by taking advantage of their  dynamical  properties. 

For other types of number expansions, in particular for $\beta$-expansions, it turned out to be very fruitful to introduce a random dynamical system to produce the expansions. Where a deterministic system is defined as a pair $(X,T)$, a {\em random dynamical system} makes use of a family of transformations $(T_i: X \to X)_{i \in I}$ (for some index set $I$) all defined on the same domain $X$, where each map is chosen with a certain probability. One then studies the compositions of the form
\[ T_{i_n} \circ \cdots \circ T_{i_1} (x), \quad i_j \in I,\]
instead of $T^n(x)$. For $\beta$-expansions, i.e., expansions of real numbers $x$ of the form $\sum_{k \ge 1} \frac{b_k}{\beta^k}$ with $\beta>1$ a non-integer and each $b_i$ an integer between 0 and $\beta$, a corresponding random dynamical system was first introduced in \cite{DKrandom}. Applications of random $\beta$-expansions with respect to rational approximations of real numbers were then described by Daubechies et al.~\cite{daubechies06}, see also \cite{daubechies10,kohda12}, in relation to analog-to-digital conversion and in \cite{JMKA,JM} in relation to random number generation. Random one-dimensional continued fraction algorithms and their invariant measures were studied in \cite{KKV17,bahsoun20,dajkalmag,taylorcrush21,KMTV22}. Often, in terms of dynamics and the corresponding approximation properties, such a random system performs comparably to the best performing deterministic system present in the family $(T_i: X \to X)_{i \in I}$, but with the added flexibility that real numbers now have many different expansions assigned to them. Even though it is not quite clear at the moment whether placing multidimensional continued fraction algorithms in a random framework could yield similar advantages, it might be of interest to study this further.

\section{Improving Jacobi--Perron algorithm}\label{sec:ni}

In 1981 both Ito and Tanaka in \cite{itotanaka} and Nakada in \cite{Nak} introduced the notion of $\alpha$-continued fractions for $\alpha\in [1/2, 1]$. This was done by replacing $\lfloor \frac{1}{x}\rfloor$ in the definition of the Gauss map $T_G$ by $\lfloor \frac{1}{x}+1-\alpha\rfloor$ in \cite{itotanaka} and by
$\lfloor \frac{1}{|x|}+1-\alpha\rfloor$ in \cite{Nak}. For the setup of Nakada the convergents of the corresponding $\alpha$-continued fraction of a point form a sub equence of its regular convergents, and hence provide better approximations.  In particular, for $\alpha\in [\frac{1}{2}, \frac{\sqrt 5-1}{2}]$, all the corresponding $\alpha$-continued fraction algorithms are isomorphic and provide better approximations than the ones given by the regular continued fractions.
See in particular \cite{KSS}  for results on the entropy. 
One can use a similar idea to
improve the convergence properties of the Jacobi--Perron algorithm.  To keep the exposition simple we will consider the nearest integer case, corresponding to $\alpha=\frac{1}{2}$, and we will not take absolute values inside the floor function (so the digits generated could be negative).

It is well known that when it comes to convergence speed, in one dimension the nearest integer algorithm performs best.  See e.g.~\cite{BDV}. Dynamically the algorithm is given by the map $T(x) = \frac1x - ||| \frac1x |||$, where $||| \cdot |||$ denotes the distance to the nearest integer as before. The map is well defined on the interval $[-\frac12, \frac12]$. The possible continued fraction digits for this algorithm are all integers $n$ with $|n|\ge 2$. In higher dimensions  a nearest integer version of the Jacobi--Perron algorithm has attracted some attention. See  Section  \ref{subsec:expnijpa} for numerical values
for the nearest integer Jacobi--Perron algorithm from  \cite{Steiner:pc}.
We also recall that the Markov conditions  on the digits produced by the classical Jacobi--Perron algorithm have a  very simple form and that the  piecewise analyticity of  the density of its  invariant measure has been proved  in  \cite{BROISEJP}.

\subsection{First dynamical properties of the algorithm}

Let $d \ge 2$. The definition of the {\em $d$-dimensional nearest integer Jacobi--Perron map (NIJP)} on $C=\big[-\frac12, \frac12\big]^d$ is given by 
\[
T_0 (x_1, \ldots, x_d) = \left( \frac{x_2}{x_1} - \left\lfloor \frac{x_2}{x_1} + \frac12 \right\rfloor, \ldots , \frac{x_d}{x_1} - \left\lfloor \frac{x_d}{x_1} + \frac12 \right\rfloor, \frac1{x_1} - \left\lfloor \frac1{x_1} + \frac12 \right\rfloor \right)\]
and can be used to create for each $d$-tuple $(x_1, \ldots, x_d) \in C$ a sequence of continued fraction approximations with the same denominator.

The  matrix version of $T_0$ is therefore defined on 
$$\Lambda  = \{(y_0, y_1, \ldots ,y_d) \in \mathbb{R}^{d+1}\setminus\{\mathbf{0}\}:\, y_0 \ge y_1 , \cdots ,y_d \ge 0\}$$ as

\[
(y_0,y_1,\ldots,y_d) \mapsto \Big(y_1, y_2 - \Big\lfloor\frac{y_2}{y_1}+\frac12\Big\rfloor y_1, \ldots, y_d - \Big\lfloor\frac{y_d}{y_1}+\frac12\Big\rfloor y_1, y_0 - \Big\lfloor\frac{y_0}{y_1}+\frac12\Big\rfloor  y_1\Big),
\]
and we have $$\tra{}{(y_0,y_1, \cdots, y_d)}=  \tra{A_0}(y_0,y_1,\ldots,y_d) \tra{\Big(y_1, y_2 - \Big\lfloor\frac{y_2}{y_1}+\frac12\Big\rfloor y_1, \ldots, y_d - \Big\lfloor\frac{y_d}{y_1}+\frac12\Big\rfloor y_1, y_0 - \Big\lfloor\frac{y_0}{y_1}+\frac12\Big\rfloor  y_1\Big)}$$ with 
\[
\tra{A_0}(y_0,y_1,\ldots,y_d) = \begin{pmatrix}\lfloor\frac{1}{y_1}\rfloor & 1 & \lfloor\frac{y_2}{y_1}+\frac12\rfloor & \cdots & \lfloor\frac{y_{d-1}}{y_d}+\frac12\rfloor \\ 0&0&1&\cdots&0 \\ \vdots&\vdots&\ddots&\ddots&\vdots \\ 0&0&\cdots&\cdots&1 \\ 1&0&\cdots&\cdots&0\end{pmatrix}.
\]
Iterates of $T_0$ produce a matrix
$$
 \begin{pmatrix}q_0^{(n)} & p_{0,1}^{(n)} & \cdots & p_{0,d}^{(n)} \\ q_1^{(n)} & p_{1,1}^{(n)} & \cdots & p_{1,d}^{(n)} \\ \vdots & \vdots & \ddots & \vdots \\ q_d^{(n)} & p_{d,1}^{(n)} & \cdots & p_{d,d}^{(n)}\end{pmatrix}.
$$
Note that this is not  the same matrix as in (\ref{eq:MAT}). This is due to the fact that we use a different normalization  for technical reasons.

For $\mathbf x = (x_1, \ldots, x_d) \in C$ set
\[ a= a (\mathbf x) = \left| \left| \left|  \frac1{x_1} \right| \right| \right|, \quad b^{(i)} = b^{(i)} (\mathbf x) = \left| \left| \left| \frac{x_i}{x_1} \right| \right| \right|, \, 2 \le i \le d.\]
The functions $a$ and $b^{(i)}$ are piecewise constant on $C$. To be precise, for each $\mathbf x \in C$ it holds that $a(\mathbf x) =k$, $k \in \mathbb Z$, if and only if $\frac2{2k+1} < x_1 \le \frac2{2k-1}$ and for each $2 \le i \le d$ it holds that $b^{(i)}(\mathbf x) = k$ if and only if
\[ \begin{cases}
x_1\left(k-\frac12\right) \le x_2 < x_1 \left(k+\frac12 \right), &\text{if } x_1 >0,\\
x_1\left(k+\frac12\right) < x_2 \le x_1 \left(k- \frac12 \right), &\text{if } x_1 <0.
\end{cases}\]
Hence, if we let
\[ C_{a,b_2, \ldots, b_d} = \left\{ \mathbf x \in C \, : \, a(\mathbf x) = a, \, b^{(i)}(\mathbf x) = b_i, \, 2\le i \le d \right\},  \]
then the collection $\mathcal C = \{ C_{a,b_2, \ldots, b_d} \}$ yields a partition of $C$, the elements of which are called the {\em cylinder sets} of $T_0$. Figure~\ref{f:partition2} shows the partition $\mathcal C$ of $C$ for $d=2$. 

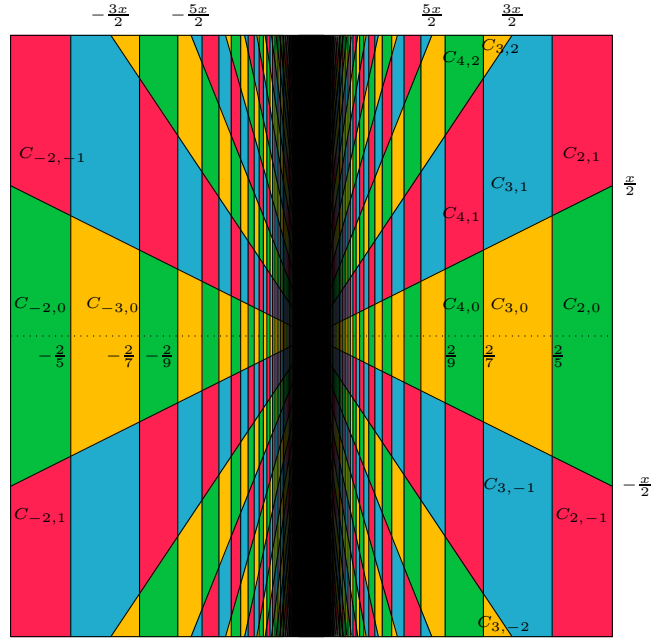
\begin{figure}[H]
\begin{center}
\begin{tikzpicture}[scale=8]

\filldraw[fill=awesome, draw=awesome] (1/2,1/2)--(2/5,1/2)--(2/5,1/5)--(1/2,1/4)--(1/2,1/2);
\filldraw[fill=awesome, draw=awesome] (1/2,-1/2)--(2/5,-1/2)--(2/5,-1/5)--(1/2,-1/4)--(1/2,-1/2);
\filldraw[fill=awesome, draw=awesome] (-1/2,1/2)--(-2/5,1/2)--(-2/5,1/5)--(-1/2,1/4)--(-1/2,1/2);
\filldraw[fill=awesome, draw=awesome] (-1/2,-1/2)--(-2/5,-1/2)--(-2/5,-1/5)--(-1/2,-1/4)--(-1/2,-1/2);
\filldraw[fill=awesome, draw=awesome] (2/11,1/2)--(2/13,1/2)--(2/13,5/13)--(2/11,5/11)--cycle;
\filldraw[fill=awesome, draw=awesome] (2/19,1/2)--(2/21,1/2)--(2/21,9/21)--(2/19,9/19)--cycle;
\filldraw[fill=awesome, draw=awesome] (2/27,1/2)--(2/29,1/2)--(2/29,13/29)--(2/27,13/27)--cycle;
\filldraw[fill=awesome, draw=awesome] (2/35,1/2)--(2/37,1/2)--(2/37,17/37)--(2/35,17/35)--cycle;
\filldraw[fill=awesome, draw=awesome] (2/11,-1/2)--(2/13,-1/2)--(2/13,-5/13)--(2/11,-5/11)--cycle;
\filldraw[fill=awesome, draw=awesome] (2/19,-1/2)--(2/21,-1/2)--(2/21,-9/21)--(2/19,-9/19)--cycle;
\filldraw[fill=awesome, draw=awesome] (2/27,-1/2)--(2/29,-1/2)--(2/29,-13/29)--(2/27,-13/27)--cycle;
\filldraw[fill=awesome, draw=awesome] (2/35,-1/2)--(2/37,-1/2)--(2/37,-17/37)--(2/35,-17/35)--cycle;
\filldraw[fill=awesome, draw=awesome] (-2/11,1/2)--(-2/13,1/2)--(-2/13,5/13)--(-2/11,5/11)--cycle;
\filldraw[fill=awesome, draw=awesome] (-2/19,1/2)--(-2/21,1/2)--(-2/21,9/21)--(-2/19,9/19)--cycle;
\filldraw[fill=awesome, draw=awesome] (-2/27,1/2)--(-2/29,1/2)--(-2/29,13/29)--(-2/27,13/27)--cycle;
\filldraw[fill=awesome, draw=awesome] (-2/35,1/2)--(-2/37,1/2)--(-2/37,17/37)--(-2/35,17/35)--cycle;
\filldraw[fill=awesome, draw=awesome] (-2/11,-1/2)--(-2/13,-1/2)--(-2/13,-5/13)--(-2/11,-5/11)--cycle;
\filldraw[fill=awesome, draw=awesome] (-2/19,-1/2)--(-2/21,-1/2)--(-2/21,-9/21)--(-2/19,-9/19)--cycle;
\filldraw[fill=awesome, draw=awesome] (-2/27,-1/2)--(-2/29,-1/2)--(-2/29,-13/29)--(-2/27,-13/27)--cycle;
\filldraw[fill=awesome, draw=awesome] (-2/35,-1/2)--(-2/37,-1/2)--(-2/37,-17/37)--(-2/35,-17/35)--cycle;

\filldraw[fill=amber, draw=amber] (2/9,3/9)--(2/9,1/2)--(1/5,1/2)--(2/11,5/11)--(2/11,3/11)--(2/9,3/9);
\filldraw[fill=amber, draw=amber] (2/9,-3/9)--(2/9,-1/2)--(1/5,-1/2)--(2/11,-5/11)--(2/11,-3/11)--(2/9,-3/9);
\filldraw[fill=amber, draw=amber] (-2/9,3/9)--(-2/9,1/2)--(-1/5,1/2)--(-2/11,5/11)--(-2/11,3/11)--(-2/9,3/9);
\filldraw[fill=amber, draw=amber] (-2/9,-3/9)--(-2/9,-1/2)--(-1/5,-1/2)--(-2/11,-5/11)--(-2/11,-3/11)--(-2/9,-3/9);
\filldraw[fill=amber, draw=amber] (2/17,7/17)--(2/17,1/2)--(1/9,1/2)--(2/19,9/19)--(2/19,7/19)--cycle;
\filldraw[fill=amber, draw=amber] (2/25,11/25)--(2/25,1/2)--(1/13,1/2)--(2/27,13/27)--(2/27,11/27)--cycle;
\filldraw[fill=amber, draw=amber] (2/33,15/33)--(2/33,1/2)--(1/17,1/2)--(2/35,17/35)--(2/35,15/35)--cycle;
\filldraw[fill=amber, draw=amber] (2/41,19/41)--(2/41,1/2)--(1/21,1/2)--(2/43,21/43)--(2/43,19/43)--cycle;
\filldraw[fill=amber, draw=amber] (2/17,-7/17)--(2/17,-1/2)--(1/9,-1/2)--(2/19,-9/19)--(2/19,-7/19)--cycle;
\filldraw[fill=amber, draw=amber] (2/25,-11/25)--(2/25,-1/2)--(1/13,-1/2)--(2/27,-13/27)--(2/27,-11/27)--cycle;
\filldraw[fill=amber, draw=amber] (2/33,-15/33)--(2/33,-1/2)--(1/17,-1/2)--(2/35,-17/35)--(2/35,-15/35)--cycle;
\filldraw[fill=amber, draw=amber] (2/41,-19/41)--(2/41,-1/2)--(1/21,-1/2)--(2/43,-21/43)--(2/43,-19/43)--cycle;

\filldraw[fill=amber, draw=amber] (-2/17,7/17)--(-2/17,1/2)--(-1/9,1/2)--(-2/19,9/19)--(-2/19,7/19)--cycle;
\filldraw[fill=amber, draw=amber] (-2/25,11/25)--(-2/25,1/2)--(-1/13,1/2)--(-2/27,13/27)--(-2/27,11/27)--cycle;
\filldraw[fill=amber, draw=amber] (-2/33,15/33)--(-2/33,1/2)--(-1/17,1/2)--(-2/35,17/35)--(-2/35,15/35)--cycle;
\filldraw[fill=amber, draw=amber] (-2/41,19/41)--(-2/41,1/2)--(-1/21,1/2)--(-2/43,21/43)--(-2/43,19/43)--cycle;
\filldraw[fill=amber, draw=amber] (-2/17,-7/17)--(-2/17,-1/2)--(-1/9,-1/2)--(-2/19,-9/19)--(-2/19,-7/19)--cycle;
\filldraw[fill=amber, draw=amber] (-2/25,-11/25)--(-2/25,-1/2)--(-1/13,-1/2)--(-2/27,-13/27)--(-2/27,-11/27)--cycle;
\filldraw[fill=amber, draw=amber] (-2/33,-15/33)--(-2/33,-1/2)--(-1/17,-1/2)--(-2/35,-17/35)--(-2/35,-15/35)--cycle;
\filldraw[fill=amber, draw=amber] (-2/41,-19/41)--(-2/41,-1/2)--(-1/21,-1/2)--(-2/43,-21/43)--(-2/43,-19/43)--cycle;

\filldraw[fill=ballblue, draw=ballblue] (2/5,1/2)--(1/3,1/2)--(2/7,3/7)--(2/7,1/7)--(2/5,1/5)--(2/5,1/2);
\filldraw[fill=ballblue, draw=ballblue] (2/5,-1/2)--(1/3,-1/2)--(2/7,-3/7)--(2/7,-1/7)--(2/5,-1/5)--(2/5,-1/2);
\filldraw[fill=ballblue, draw=ballblue] (-2/5,1/2)--(-1/3,1/2)--(-2/7,3/7)--(-2/7,1/7)--(-2/5,1/5)--(-2/5,1/2);
\filldraw[fill=ballblue, draw=ballblue] (-2/5,-1/2)--(-1/3,-1/2)--(-2/7,-3/7)--(-2/7,-1/7)--(-2/5,-1/5)--(-2/5,-1/2);
\filldraw[fill=ballblue, draw=ballblue] (2/13,1/2)--(1/7,1/2)--(2/15,7/15)--(2/15,5/15)--(2/13,5/13)--cycle;
\filldraw[fill=ballblue, draw=ballblue] (2/21,1/2)--(1/11,1/2)--(2/23,11/23)--(2/23,9/23)--(2/21,7/21)--cycle;
\filldraw[fill=ballblue, draw=ballblue] (2/29,1/2)--(1/15,1/2)--(2/31,15/31)--(2/31,13/31)--(2/29,13/29)--cycle;
\filldraw[fill=ballblue, draw=ballblue] (2/13,-1/2)--(1/7,-1/2)--(2/15,-7/15)--(2/15,-5/15)--(2/13,-5/13)--cycle;
\filldraw[fill=ballblue, draw=ballblue] (2/21,-1/2)--(1/11,-1/2)--(2/23,-11/23)--(2/23,-9/23)--(2/21,-7/21)--cycle;
\filldraw[fill=ballblue, draw=ballblue] (2/29,-1/2)--(1/15,-1/2)--(2/31,-15/31)--(2/31,-13/31)--(2/29,-13/29)--cycle;
\filldraw[fill=ballblue, draw=ballblue] (-2/13,1/2)--(-1/7,1/2)--(-2/15,7/15)--(-2/15,5/15)--(-2/13,5/13)--cycle;
\filldraw[fill=ballblue, draw=ballblue] (-2/21,1/2)--(-1/11,1/2)--(-2/23,11/23)--(-2/23,9/23)--(-2/21,7/21)--cycle;
\filldraw[fill=ballblue, draw=ballblue] (-2/29,1/2)--(-1/15,1/2)--(-2/31,15/31)--(-2/31,13/31)--(-2/29,13/29)--cycle;
\filldraw[fill=ballblue, draw=ballblue] (-2/13,-1/2)--(-1/7,-1/2)--(-2/15,-7/15)--(-2/15,-5/15)--(-2/13,-5/13)--cycle;
\filldraw[fill=ballblue, draw=ballblue] (-2/21,-1/2)--(-1/11,-1/2)--(-2/23,-11/23)--(-2/23,-9/23)--(-2/21,-7/21)--cycle;
\filldraw[fill=ballblue, draw=ballblue] (-2/29,-1/2)--(-1/15,-1/2)--(-2/31,-15/31)--(-2/31,-13/31)--(-2/29,-13/29)--cycle;
\filldraw[fill=ballblue, draw=ballblue] (2/37,1/2)--(1/19,1/2)--(2/39,19/39)--(2/39,17/39)--(2/37,17/37)--cycle;
\filldraw[fill=ballblue, draw=ballblue] (2/37,-1/2)--(1/19,-1/2)--(2/39,-19/39)--(2/39,-17/39)--(2/37,-17/37)--cycle;
\filldraw[fill=ballblue, draw=ballblue] (-2/37,1/2)--(-1/19,1/2)--(-2/39,19/39)--(-2/39,17/39)--(-2/37,17/37)--cycle;
\filldraw[fill=ballblue, draw=ballblue] (-2/37,-1/2)--(-1/19,-1/2)--(-2/39,-19/39)--(-2/39,-17/39)--(-2/37,-17/37)--cycle;

\filldraw[fill=amber, draw=amber] (1/3,1/2)--(2/7,1/2)--(2/7,3/7);
\filldraw[fill=amber, draw=amber] (1/3,-1/2)--(2/7,-1/2)--(2/7,-3/7);
\filldraw[fill=amber, draw=amber] (-1/3,1/2)--(-2/7,1/2)--(-2/7,3/7);
\filldraw[fill=amber, draw=amber] (-1/3,-1/2)--(-2/7,-1/2)--(-2/7,-3/7);

\filldraw[fill=pastelgreen, draw=pastelgreen] (1/2,1/4)--(1/2,-1/4)--(2/5,-1/5)--(2/5,1/5)--(1/2,1/4);
\filldraw[fill=pastelgreen, draw=pastelgreen] (-1/2,1/4)--(-1/2,-1/4)--(-2/5,-1/5)--(-2/5,1/5)--(-1/2,1/4);

\filldraw[fill=pastelgreen, draw=pastelgreen] (2/7,1/2)--(2/9,1/2)--(2/9,3/9)--(2/7,3/7)--(2/7,1/2);
\filldraw[fill=pastelgreen, draw=pastelgreen] (2/7,-1/2)--(2/9,-1/2)--(2/9,-3/9)--(2/7,-3/7)--(2/7,-1/2);
\filldraw[fill=pastelgreen, draw=pastelgreen] (-2/7,1/2)--(-2/9,1/2)--(-2/9,3/9)--(-2/7,3/7)--(-2/7,1/2);
\filldraw[fill=pastelgreen, draw=pastelgreen] (-2/7,-1/2)--(-2/9,-1/2)--(-2/9,-3/9)--(-2/7,-3/7)--(-2/7,-1/2);
\filldraw[fill=pastelgreen, draw=pastelgreen] (2/15,1/2)--(2/17,1/2)--(2/17,7/17)--(2/15,7/15)--cycle;
\filldraw[fill=pastelgreen, draw=pastelgreen] (2/23,1/2)--(2/25,1/2)--(2/25,11/25)--(2/23,11/23)--cycle;
\filldraw[fill=pastelgreen, draw=pastelgreen] (2/31,1/2)--(2/33,1/2)--(2/33,15/33)--(2/31,15/31)--cycle;
\filldraw[fill=pastelgreen, draw=pastelgreen] (2/39,1/2)--(2/41,1/2)--(2/41,19/41)--(2/31,19/39)--cycle;
\filldraw[fill=pastelgreen, draw=pastelgreen] (2/15,-1/2)--(2/17,-1/2)--(2/17,-7/17)--(2/15,-7/15)--cycle;
\filldraw[fill=pastelgreen, draw=pastelgreen] (2/23,-1/2)--(2/25,-1/2)--(2/25,-11/25)--(2/23,-11/23)--cycle;
\filldraw[fill=pastelgreen, draw=pastelgreen] (2/31,-1/2)--(2/33,-1/2)--(2/33,-15/33)--(2/31,-15/31)--cycle;
\filldraw[fill=pastelgreen, draw=pastelgreen] (2/39,-1/2)--(2/41,-1/2)--(2/41,-19/41)--(2/31,-19/39)--cycle;
\filldraw[fill=pastelgreen, draw=pastelgreen] (-2/15,1/2)--(-2/17,1/2)--(-2/17,7/17)--(-2/15,7/15)--cycle;
\filldraw[fill=pastelgreen, draw=pastelgreen] (-2/23,1/2)--(-2/25,1/2)--(-2/25,11/25)--(-2/23,11/23)--cycle;
\filldraw[fill=pastelgreen, draw=pastelgreen] (-2/31,1/2)--(-2/33,1/2)--(-2/33,15/33)--(-2/31,15/31)--cycle;
\filldraw[fill=pastelgreen, draw=pastelgreen] (-2/39,1/2)--(-2/41,1/2)--(-2/41,19/41)--(-2/31,19/39)--cycle;
\filldraw[fill=pastelgreen, draw=pastelgreen] (-2/15,-1/2)--(-2/17,-1/2)--(-2/17,-7/17)--(-2/15,-7/15)--cycle;
\filldraw[fill=pastelgreen, draw=pastelgreen] (-2/23,-1/2)--(-2/25,-1/2)--(-2/25,-11/25)--(-2/23,-11/23)--cycle;
\filldraw[fill=pastelgreen, draw=pastelgreen] (-2/31,-1/2)--(-2/33,-1/2)--(-2/33,-15/33)--(-2/31,-15/31)--cycle;
\filldraw[fill=pastelgreen, draw=pastelgreen] (-2/39,-1/2)--(-2/41,-1/2)--(-2/41,-19/41)--(-2/31,-19/39)--cycle;

\filldraw[fill=ballblue, draw=ballblue] (1/5,1/2)--(2/11,1/2)--(2/11,5/11)--cycle;
\filldraw[fill=ballblue, draw=ballblue] (1/5,-1/2)--(2/11,-1/2)--(2/11,-5/11)--cycle;
\filldraw[fill=ballblue, draw=ballblue] (-1/5,1/2)--(-2/11,1/2)--(-2/11,5/11)--cycle;
\filldraw[fill=ballblue, draw=ballblue] (-1/5,-1/2)--(-2/11,-1/2)--(-2/11,-5/11)--cycle;
\filldraw[fill=ballblue, draw=ballblue] (1/9,1/2)--(2/19,1/2)--(2/19,9/19)--cycle;
\filldraw[fill=ballblue, draw=ballblue] (1/9,-1/2)--(2/19,-1/2)--(2/19,-9/19)--cycle;
\filldraw[fill=ballblue, draw=ballblue] (-1/9,1/2)--(-2/19,1/2)--(-2/19,9/19)--cycle;
\filldraw[fill=ballblue, draw=ballblue] (-1/9,-1/2)--(-2/19,-1/2)--(-2/19,-9/19)--cycle;
\filldraw[fill=ballblue, draw=ballblue] (1/13,1/2)--(2/27,1/2)--(2/27,13/27)--cycle;
\filldraw[fill=ballblue, draw=ballblue] (1/13,-1/2)--(2/27,-1/2)--(2/27,-13/27)--cycle;
\filldraw[fill=ballblue, draw=ballblue] (-1/13,1/2)--(-2/27,1/2)--(-2/27,13/27)--cycle;
\filldraw[fill=ballblue, draw=ballblue] (-1/13,-1/2)--(-2/27,-1/2)--(-2/27,-13/27)--cycle;

\filldraw[fill=amber, draw=amber] (1/7,1/2)--(2/15,1/2)--(2/15,7/15)--cycle;
\filldraw[fill=amber, draw=amber] (1/11,1/2)--(2/23,1/2)--(2/23,11/23)--cycle;
\filldraw[fill=amber, draw=amber] (1/15,1/2)--(2/31,1/2)--(2/31,15/31)--cycle;
\filldraw[fill=amber, draw=amber] (1/7,-1/2)--(2/15,-1/2)--(2/15,-7/15)--cycle;
\filldraw[fill=amber, draw=amber] (1/11,-1/2)--(2/23,-1/2)--(2/23,-11/23)--cycle;
\filldraw[fill=amber, draw=amber] (1/15,-1/2)--(2/31,-1/2)--(2/31,-15/31)--cycle;
\filldraw[fill=amber, draw=amber] (-1/7,1/2)--(-2/15,1/2)--(-2/15,7/15)--cycle;
\filldraw[fill=amber, draw=amber] (-1/11,1/2)--(-2/23,1/2)--(-2/23,11/23)--cycle;
\filldraw[fill=amber, draw=amber] (-1/15,1/2)--(-2/31,1/2)--(-2/31,15/31)--cycle;
\filldraw[fill=amber, draw=amber] (-1/7,-1/2)--(-2/15,-1/2)--(-2/15,-7/15)--cycle;
\filldraw[fill=amber, draw=amber] (-1/11,-1/2)--(-2/23,-1/2)--(-2/23,-11/23)--cycle;
\filldraw[fill=amber, draw=amber] (-1/15,-1/2)--(-2/31,-1/2)--(-2/31,-15/31)--cycle;

\foreach \i in {2,...,12}
{
\filldraw[fill=awesome, draw=awesome] ({2/(4*\i-1)},{3/(4*\i-1)})--({2/(4*\i-1)},{1/(4*\i-1)})--({2/(4*\i+1)},{1/(4*\i+1)})--({2/(4*\i+1)},{3/(4*\i+1)})--({2/(4*\i-1)},{3/(4*\i-1)});
\filldraw[fill=awesome, draw=awesome] ({2/(4*\i-1)},{-3/(4*\i-1)})--({2/(4*\i-1)},{-1/(4*\i-1)})--({2/(4*\i+1)},{-1/(4*\i+1)})--({2/(4*\i+1)},{-3/(4*\i+1)})--({2/(4*\i-1)},{-3/(4*\i-1)});
\filldraw[fill=awesome, draw=awesome] ({-2/(4*\i-1)},{3/(4*\i-1)})--({-2/(4*\i-1)},{1/(4*\i-1)})--({-2/(4*\i+1)},{1/(4*\i+1)})--({-2/(4*\i+1)},{3/(4*\i+1)})--({-2/(4*\i-1)},{3/(4*\i-1)});
\filldraw[fill=awesome, draw=awesome] ({-2/(4*\i-1)},{-3/(4*\i-1)})--({-2/(4*\i-1)},{-1/(4*\i-1)})--({-2/(4*\i+1)},{-1/(4*\i+1)})--({-2/(4*\i+1)},{-3/(4*\i+1)})--({-2/(4*\i-1)},{-3/(4*\i-1)});

\filldraw[fill=ballblue, draw=ballblue] ({2/(4*\i+1)},{3/(4*\i+1)})--({2/(4*\i+1)},{1/(4*\i+1)})--({2/(4*\i+3)},{1/(4*\i+3)})--({2/(4*\i+3)},{3/(4*\i+3)})--({2/(4*\i+1)},{3/(4*\i+1)});
\filldraw[fill=ballblue, draw=ballblue] ({2/(4*\i+1)},{-3/(4*\i+1)})--({2/(4*\i+1)},{-1/(4*\i+1)})--({2/(4*\i+3)},{-1/(4*\i+3)})--({2/(4*\i+3)},{-3/(4*\i+3)})--({2/(4*\i+1)},{-3/(4*\i+1)});
\filldraw[fill=ballblue, draw=ballblue] ({-2/(4*\i+1)},{3/(4*\i+1)})--({-2/(4*\i+1)},{1/(4*\i+1)})--({-2/(4*\i+3)},{1/(4*\i+3)})--({-2/(4*\i+3)},{3/(4*\i+3)})--({-2/(4*\i+1)},{3/(4*\i+1)});
\filldraw[fill=ballblue, draw=ballblue] ({-2/(4*\i+1)},{-3/(4*\i+1)})--({-2/(4*\i+1)},{-1/(4*\i+1)})--({-2/(4*\i+3)},{-1/(4*\i+3)})--({-2/(4*\i+3)},{-3/(4*\i+3)})--({-2/(4*\i+1)},{-3/(4*\i+1)});

\filldraw[fill=pastelgreen, draw=pastelgreen] ({2/(4*\i-1)},{-1/(4*\i-1)})--({2/(4*\i-1)},{1/(4*\i-1)})--({2/(4*\i+1)},{1/(4*\i+1)})--({2/(4*\i+1)},{-1/(4*\i+1)})--({2/(4*\i-1)},{-1/(4*\i-1)});
\filldraw[fill=pastelgreen, draw=pastelgreen] ({-2/(4*\i-1)},{-1/(4*\i-1)})--({-2/(4*\i-1)},{1/(4*\i-1)})--({-2/(4*\i+1)},{1/(4*\i+1)})--({-2/(4*\i+1)},{-1/(4*\i+1)})--({-2/(4*\i-1)},{-1/(4*\i-1)});

\filldraw[fill=amber, draw=amber] ({2/(4*\i-1)},{-1/(4*\i-1)})--({2/(4*\i-1)},{1/(4*\i-1)})--({2/(4*\i-3)},{1/(4*\i-3)})--({2/(4*\i-3)},{-1/(4*\i-3)})--({2/(4*\i-1)},{-1/(4*\i-1)});
\filldraw[fill=amber, draw=amber] ({-2/(4*\i-1)},{-1/(4*\i-1)})--({-2/(4*\i-1)},{1/(4*\i-1)})--({-2/(4*\i-3)},{1/(4*\i-3)})--({-2/(4*\i-3)},{-1/(4*\i-3)})--({-2/(4*\i-1)},{-1/(4*\i-1)});

\filldraw[fill=amber, draw=amber] ({2/(4*\i+5)},{3/(4*\i+5)})--({2/(4*\i+5)},{5/(4*\i+5)})--({2/(4*\i+7)},{5/(4*\i+7)})--({2/(4*\i+7)},{3/(4*\i+7)})--({2/(4*\i+5)},{3/(4*\i+5)});
\filldraw[fill=amber, draw=amber] ({2/(4*\i+5)},{-3/(4*\i+5)})--({2/(4*\i+5)},{-5/(4*\i+5)})--({2/(4*\i+7)},{-5/(4*\i+7)})--({2/(4*\i+7)},{-3/(4*\i+7)})--({2/(4*\i+5)},{-3/(4*\i+5)});
\filldraw[fill=amber, draw=amber] ({-2/(4*\i+5)},{3/(4*\i+5)})--({-2/(4*\i+5)},{5/(4*\i+5)})--({-2/(4*\i+7)},{5/(4*\i+7)})--({-2/(4*\i+7)},{3/(4*\i+7)})--({-2/(4*\i+5)},{3/(4*\i+5)});
\filldraw[fill=amber, draw=amber] ({-2/(4*\i+5)},{-3/(4*\i+5)})--({-2/(4*\i+5)},{-5/(4*\i+5)})--({-2/(4*\i+7)},{-5/(4*\i+7)})--({-2/(4*\i+7)},{-3/(4*\i+7)})--({-2/(4*\i+5)},{-3/(4*\i+5)});

\filldraw[fill=pastelgreen, draw=pastelgreen] ({2/(4*\i+3)},{3/(4*\i+3)})--({2/(4*\i+3)},{5/(4*\i+3)})--({2/(4*\i+5)},{5/(4*\i+5)})--({2/(4*\i+5)},{3/(4*\i+5)})--({2/(4*\i+3)},{3/(4*\i+3)});
\filldraw[fill=pastelgreen, draw=pastelgreen] ({2/(4*\i+3)},{-3/(4*\i+3)})--({2/(4*\i+3)},{-5/(4*\i+3)})--({2/(4*\i+5)},{-5/(4*\i+5)})--({2/(4*\i+5)},{-3/(4*\i+5)})--({2/(4*\i+3)},{-3/(4*\i+3)});
\filldraw[fill=pastelgreen, draw=pastelgreen] ({-2/(4*\i+3)},{3/(4*\i+3)})--({-2/(4*\i+3)},{5/(4*\i+3)})--({-2/(4*\i+5)},{5/(4*\i+5)})--({-2/(4*\i+5)},{3/(4*\i+5)})--({-2/(4*\i+3)},{3/(4*\i+3)});
\filldraw[fill=pastelgreen, draw=pastelgreen] ({-2/(4*\i+3)},{-3/(4*\i+3)})--({-2/(4*\i+3)},{-5/(4*\i+3)})--({-2/(4*\i+5)},{-5/(4*\i+5)})--({-2/(4*\i+5)},{-3/(4*\i+5)})--({-2/(4*\i+3)},{-3/(4*\i+3)});

}

\foreach \i in {3,...,12}
{

\filldraw[fill=awesome, draw=awesome] ({2/(4*\i+3)},{5/(4*\i+3)})--({2/(4*\i+3)},{7/(4*\i+3)})--({2/(4*\i+5)},{7/(4*\i+5)})--({2/(4*\i+5)},{5/(4*\i+5)})--cycle;
\filldraw[fill=awesome, draw=awesome] ({2/(4*\i+3)},{-5/(4*\i+3)})--({2/(4*\i+3)},{-7/(4*\i+3)})--({2/(4*\i+5)},{-7/(4*\i+5)})--({2/(4*\i+5)},{-5/(4*\i+5)})--cycle;
\filldraw[fill=awesome, draw=awesome] ({-2/(4*\i+3)},{5/(4*\i+3)})--({-2/(4*\i+3)},{7/(4*\i+3)})--({-2/(4*\i+5)},{7/(4*\i+5)})--({-2/(4*\i+5)},{5/(4*\i+5)})--cycle;
\filldraw[fill=awesome, draw=awesome] ({-2/(4*\i+3)},{-5/(4*\i+3)})--({-2/(4*\i+3)},{-7/(4*\i+3)})--({-2/(4*\i+5)},{-7/(4*\i+5)})--({-2/(4*\i+5)},{-5/(4*\i+5)})--cycle;

}

\foreach \i in {5,...,12}
{

\filldraw[fill=awesome, draw=awesome] ({2/(4*\i+3)},{9/(4*\i+3)})--({2/(4*\i+3)},{11/(4*\i+3)})--({2/(4*\i+5)},{11/(4*\i+5)})--({2/(4*\i+5)},{9/(4*\i+5)})--cycle;
\filldraw[fill=awesome, draw=awesome] ({2/(4*\i+3)},{-9/(4*\i+3)})--({2/(4*\i+3)},{-11/(4*\i+3)})--({2/(4*\i+5)},{-11/(4*\i+5)})--({2/(4*\i+5)},{-9/(4*\i+5)})--cycle;
\filldraw[fill=awesome, draw=awesome] ({-2/(4*\i+3)},{9/(4*\i+3)})--({-2/(4*\i+3)},{11/(4*\i+3)})--({-2/(4*\i+5)},{11/(4*\i+5)})--({-2/(4*\i+5)},{9/(4*\i+5)})--cycle;
\filldraw[fill=awesome, draw=awesome] ({-2/(4*\i+3)},{-9/(4*\i+3)})--({-2/(4*\i+3)},{-11/(4*\i+3)})--({-2/(4*\i+5)},{-11/(4*\i+5)})--({-2/(4*\i+5)},{-9/(4*\i+5)})--cycle;

\filldraw[fill=pastelgreen, draw=pastelgreen] ({2/(4*\i-1)},{7/(4*\i-1)})--({2/(4*\i-1)},{9/(4*\i-1)})--({2/(4*\i+1)},{9/(4*\i+1)})--({2/(4*\i+1)},{7/(4*\i+1)})--cycle;
\filldraw[fill=pastelgreen, draw=pastelgreen] ({2/(4*\i-1)},{-7/(4*\i-1)})--({2/(4*\i-1)},{-9/(4*\i-1)})--({2/(4*\i+1)},{-9/(4*\i+1)})--({2/(4*\i+1)},{-7/(4*\i+1)})--cycle;
\filldraw[fill=pastelgreen, draw=pastelgreen] ({-2/(4*\i-1)},{7/(4*\i-1)})--({-2/(4*\i-1)},{9/(4*\i-1)})--({-2/(4*\i+1)},{9/(4*\i+1)})--({-2/(4*\i+1)},{7/(4*\i+1)})--cycle;
\filldraw[fill=pastelgreen, draw=pastelgreen] ({-2/(4*\i-1)},{-7/(4*\i-1)})--({-2/(4*\i-1)},{-9/(4*\i-1)})--({-2/(4*\i+1)},{-9/(4*\i+1)})--({-2/(4*\i+1)},{-7/(4*\i+1)})--cycle;

\filldraw[fill=ballblue, draw=ballblue] ({2/(4*\i-3)},{5/(4*\i-3)})--({2/(4*\i-3)},{7/(4*\i-3)})--({2/(4*\i-1)},{7/(4*\i-1)})--({2/(4*\i-1)},{5/(4*\i-1)})--cycle;
\filldraw[fill=ballblue, draw=ballblue] ({2/(4*\i-3)},{-5/(4*\i-3)})--({2/(4*\i-3)},{-7/(4*\i-3)})--({2/(4*\i-1)},{-7/(4*\i-1)})--({2/(4*\i-1)},{-5/(4*\i-1)})--cycle;
\filldraw[fill=ballblue, draw=ballblue] ({-2/(4*\i-3)},{5/(4*\i-3)})--({-2/(4*\i-3)},{7/(4*\i-3)})--({-2/(4*\i-1)},{7/(4*\i-1)})--({-2/(4*\i-1)},{5/(4*\i-1)})--cycle;
\filldraw[fill=ballblue, draw=ballblue] ({-2/(4*\i-3)},{-5/(4*\i-3)})--({-2/(4*\i-3)},{-7/(4*\i-3)})--({-2/(4*\i-1)},{-7/(4*\i-1)})--({-2/(4*\i-1)},{-5/(4*\i-1)})--cycle;

\filldraw[fill=amber, draw=amber] ({2/(4*\i+1)},{7/(4*\i+1)})--({2/(4*\i+1)},{9/(4*\i+1)})--({2/(4*\i+3)},{9/(4*\i+3)})--({2/(4*\i+3)},{7/(4*\i+3)})--cycle;
\filldraw[fill=amber, draw=amber] ({2/(4*\i+1)},{-7/(4*\i+1)})--({2/(4*\i+1)},{-9/(4*\i+1)})--({2/(4*\i+3)},{-9/(4*\i+3)})--({2/(4*\i+3)},{-7/(4*\i+3)})--cycle;
\filldraw[fill=amber, draw=amber] ({-2/(4*\i+1)},{7/(4*\i+1)})--({-2/(4*\i+1)},{9/(4*\i+1)})--({-2/(4*\i+3)},{9/(4*\i+3)})--({-2/(4*\i+3)},{7/(4*\i+3)})--cycle;
\filldraw[fill=amber, draw=amber] ({-2/(4*\i+1)},{-7/(4*\i+1)})--({-2/(4*\i+1)},{-9/(4*\i+1)})--({-2/(4*\i+3)},{-9/(4*\i+3)})--({-2/(4*\i+3)},{-7/(4*\i+3)})--cycle;
}

\foreach \i in {7,...,12}
{

\filldraw[fill=awesome, draw=awesome] ({2/(4*\i+3)},{13/(4*\i+3)})--({2/(4*\i+3)},{15/(4*\i+3)})--({2/(4*\i+5)},{15/(4*\i+5)})--({2/(4*\i+5)},{13/(4*\i+5)})--cycle;
\filldraw[fill=awesome, draw=awesome] ({2/(4*\i+3)},{-13/(4*\i+3)})--({2/(4*\i+3)},{-15/(4*\i+3)})--({2/(4*\i+5)},{-15/(4*\i+5)})--({2/(4*\i+5)},{-13/(4*\i+5)})--cycle;
\filldraw[fill=awesome, draw=awesome] ({-2/(4*\i+3)},{13/(4*\i+3)})--({-2/(4*\i+3)},{15/(4*\i+3)})--({-2/(4*\i+5)},{15/(4*\i+5)})--({-2/(4*\i+5)},{13/(4*\i+5)})--cycle;
\filldraw[fill=awesome, draw=awesome] ({-2/(4*\i+3)},{-13/(4*\i+3)})--({-2/(4*\i+3)},{-15/(4*\i+3)})--({-2/(4*\i+5)},{-15/(4*\i+5)})--({-2/(4*\i+5)},{-13/(4*\i+5)})--cycle;

\filldraw[fill=pastelgreen, draw=pastelgreen] ({2/(4*\i-1)},{11/(4*\i-1)})--({2/(4*\i-1)},{13/(4*\i-1)})--({2/(4*\i+1)},{13/(4*\i+1)})--({2/(4*\i+1)},{11/(4*\i+1)})--cycle;
\filldraw[fill=pastelgreen, draw=pastelgreen] ({2/(4*\i-1)},{-11/(4*\i-1)})--({2/(4*\i-1)},{-13/(4*\i-1)})--({2/(4*\i+1)},{-13/(4*\i+1)})--({2/(4*\i+1)},{-11/(4*\i+1)})--cycle;
\filldraw[fill=pastelgreen, draw=pastelgreen] ({-2/(4*\i-1)},{11/(4*\i-1)})--({-2/(4*\i-1)},{13/(4*\i-1)})--({-2/(4*\i+1)},{13/(4*\i+1)})--({-2/(4*\i+1)},{11/(4*\i+1)})--cycle;
\filldraw[fill=pastelgreen, draw=pastelgreen] ({-2/(4*\i-1)},{-11/(4*\i-1)})--({-2/(4*\i-1)},{-13/(4*\i-1)})--({-2/(4*\i+1)},{-13/(4*\i+1)})--({-2/(4*\i+1)},{-11/(4*\i+1)})--cycle;

\filldraw[fill=ballblue, draw=ballblue] ({2/(4*\i-3)},{9/(4*\i-3)})--({2/(4*\i-3)},{11/(4*\i-3)})--({2/(4*\i-1)},{11/(4*\i-1)})--({2/(4*\i-1)},{9/(4*\i-1)})--cycle;
\filldraw[fill=ballblue, draw=ballblue] ({2/(4*\i-3)},{-9/(4*\i-3)})--({2/(4*\i-3)},{-11/(4*\i-3)})--({2/(4*\i-1)},{-11/(4*\i-1)})--({2/(4*\i-1)},{-9/(4*\i-1)})--cycle;
\filldraw[fill=ballblue, draw=ballblue] ({-2/(4*\i-3)},{9/(4*\i-3)})--({-2/(4*\i-3)},{11/(4*\i-3)})--({-2/(4*\i-1)},{11/(4*\i-1)})--({-2/(4*\i-1)},{9/(4*\i-1)})--cycle;
\filldraw[fill=ballblue, draw=ballblue] ({-2/(4*\i-3)},{-9/(4*\i-3)})--({-2/(4*\i-3)},{-11/(4*\i-3)})--({-2/(4*\i-1)},{-11/(4*\i-1)})--({-2/(4*\i-1)},{-9/(4*\i-1)})--cycle;

\filldraw[fill=amber, draw=amber] ({2/(4*\i+1)},{11/(4*\i+1)})--({2/(4*\i+1)},{13/(4*\i+1)})--({2/(4*\i+3)},{13/(4*\i+3)})--({2/(4*\i+3)},{11/(4*\i+3)})--cycle;
\filldraw[fill=amber, draw=amber] ({2/(4*\i+1)},{-11/(4*\i+1)})--({2/(4*\i+1)},{-13/(4*\i+1)})--({2/(4*\i+3)},{-13/(4*\i+3)})--({2/(4*\i+3)},{-11/(4*\i+3)})--cycle;
\filldraw[fill=amber, draw=amber] ({-2/(4*\i+1)},{11/(4*\i+1)})--({-2/(4*\i+1)},{13/(4*\i+1)})--({-2/(4*\i+3)},{13/(4*\i+3)})--({-2/(4*\i+3)},{11/(4*\i+3)})--cycle;
\filldraw[fill=amber, draw=amber] ({-2/(4*\i+1)},{-11/(4*\i+1)})--({-2/(4*\i+1)},{-13/(4*\i+1)})--({-2/(4*\i+3)},{-13/(4*\i+3)})--({-2/(4*\i+3)},{-11/(4*\i+3)})--cycle;
}

\foreach \i in {9,...,12}
{

\filldraw[fill=awesome, draw=awesome] ({2/(4*\i+3)},{17/(4*\i+3)})--({2/(4*\i+3)},{19/(4*\i+3)})--({2/(4*\i+5)},{19/(4*\i+5)})--({2/(4*\i+5)},{17/(4*\i+5)})--cycle;
\filldraw[fill=awesome, draw=awesome] ({2/(4*\i+3)},{-17/(4*\i+3)})--({2/(4*\i+3)},{-19/(4*\i+3)})--({2/(4*\i+5)},{-19/(4*\i+5)})--({2/(4*\i+5)},{-17/(4*\i+5)})--cycle;
\filldraw[fill=awesome, draw=awesome] ({-2/(4*\i+3)},{17/(4*\i+3)})--({-2/(4*\i+3)},{19/(4*\i+3)})--({-2/(4*\i+5)},{19/(4*\i+5)})--({-2/(4*\i+5)},{17/(4*\i+5)})--cycle;
\filldraw[fill=awesome, draw=awesome] ({-2/(4*\i+3)},{-17/(4*\i+3)})--({-2/(4*\i+3)},{-19/(4*\i+3)})--({-2/(4*\i+5)},{-19/(4*\i+5)})--({-2/(4*\i+5)},{-17/(4*\i+5)})--cycle;

\filldraw[fill=pastelgreen, draw=pastelgreen] ({2/(4*\i-1)},{15/(4*\i-1)})--({2/(4*\i-1)},{17/(4*\i-1)})--({2/(4*\i+1)},{17/(4*\i+1)})--({2/(4*\i+1)},{15/(4*\i+1)})--cycle;
\filldraw[fill=pastelgreen, draw=pastelgreen] ({2/(4*\i-1)},{-15/(4*\i-1)})--({2/(4*\i-1)},{-17/(4*\i-1)})--({2/(4*\i+1)},{-17/(4*\i+1)})--({2/(4*\i+1)},{-15/(4*\i+1)})--cycle;
\filldraw[fill=pastelgreen, draw=pastelgreen] ({-2/(4*\i-1)},{15/(4*\i-1)})--({-2/(4*\i-1)},{17/(4*\i-1)})--({-2/(4*\i+1)},{17/(4*\i+1)})--({-2/(4*\i+1)},{15/(4*\i+1)})--cycle;
\filldraw[fill=pastelgreen, draw=pastelgreen] ({-2/(4*\i-1)},{-15/(4*\i-1)})--({-2/(4*\i-1)},{-17/(4*\i-1)})--({-2/(4*\i+1)},{-17/(4*\i+1)})--({-2/(4*\i+1)},{-15/(4*\i+1)})--cycle;

\filldraw[fill=ballblue, draw=ballblue] ({2/(4*\i-3)},{13/(4*\i-3)})--({2/(4*\i-3)},{15/(4*\i-3)})--({2/(4*\i-1)},{15/(4*\i-1)})--({2/(4*\i-1)},{13/(4*\i-1)})--cycle;
\filldraw[fill=ballblue, draw=ballblue] ({2/(4*\i-3)},{-13/(4*\i-3)})--({2/(4*\i-3)},{-15/(4*\i-3)})--({2/(4*\i-1)},{-15/(4*\i-1)})--({2/(4*\i-1)},{-13/(4*\i-1)})--cycle;
\filldraw[fill=ballblue, draw=ballblue] ({-2/(4*\i-3)},{13/(4*\i-3)})--({-2/(4*\i-3)},{15/(4*\i-3)})--({-2/(4*\i-1)},{15/(4*\i-1)})--({-2/(4*\i-1)},{13/(4*\i-1)})--cycle;
\filldraw[fill=ballblue, draw=ballblue] ({-2/(4*\i-3)},{-13/(4*\i-3)})--({-2/(4*\i-3)},{-15/(4*\i-3)})--({-2/(4*\i-1)},{-15/(4*\i-1)})--({-2/(4*\i-1)},{-13/(4*\i-1)})--cycle;

\filldraw[fill=amber, draw=amber] ({2/(4*\i+1)},{15/(4*\i+1)})--({2/(4*\i+1)},{17/(4*\i+1)})--({2/(4*\i+3)},{17/(4*\i+3)})--({2/(4*\i+3)},{15/(4*\i+3)})--cycle;
\filldraw[fill=amber, draw=amber] ({2/(4*\i+1)},{-15/(4*\i+1)})--({2/(4*\i+1)},{-17/(4*\i+1)})--({2/(4*\i+3)},{-17/(4*\i+3)})--({2/(4*\i+3)},{-15/(4*\i+3)})--cycle;
\filldraw[fill=amber, draw=amber] ({-2/(4*\i+1)},{15/(4*\i+1)})--({-2/(4*\i+1)},{17/(4*\i+1)})--({-2/(4*\i+3)},{17/(4*\i+3)})--({-2/(4*\i+3)},{15/(4*\i+3)})--cycle;
\filldraw[fill=amber, draw=amber] ({-2/(4*\i+1)},{-15/(4*\i+1)})--({-2/(4*\i+1)},{-17/(4*\i+1)})--({-2/(4*\i+3)},{-17/(4*\i+3)})--({-2/(4*\i+3)},{-15/(4*\i+3)})--cycle;
}

\foreach \i in {11,...,13}
{

\filldraw[fill=awesome, draw=awesome] ({2/(4*\i+3)},{21/(4*\i+3)})--({2/(4*\i+3)},{23/(4*\i+3)})--({2/(4*\i+5)},{23/(4*\i+5)})--({2/(4*\i+5)},{21/(4*\i+5)})--cycle;
\filldraw[fill=awesome, draw=awesome] ({2/(4*\i+3)},{-21/(4*\i+3)})--({2/(4*\i+3)},{-23/(4*\i+3)})--({2/(4*\i+5)},{-23/(4*\i+5)})--({2/(4*\i+5)},{-21/(4*\i+5)})--cycle;
\filldraw[fill=awesome, draw=awesome] ({-2/(4*\i+3)},{21/(4*\i+3)})--({-2/(4*\i+3)},{23/(4*\i+3)})--({-2/(4*\i+5)},{23/(4*\i+5)})--({-2/(4*\i+5)},{21/(4*\i+5)})--cycle;
\filldraw[fill=awesome, draw=awesome] ({-2/(4*\i+3)},{-21/(4*\i+3)})--({-2/(4*\i+3)},{-23/(4*\i+3)})--({-2/(4*\i+5)},{-23/(4*\i+5)})--({-2/(4*\i+5)},{-21/(4*\i+5)})--cycle;

\filldraw[fill=pastelgreen, draw=pastelgreen] ({2/(4*\i-1)},{19/(4*\i-1)})--({2/(4*\i-1)},{21/(4*\i-1)})--({2/(4*\i+1)},{21/(4*\i+1)})--({2/(4*\i+1)},{19/(4*\i+1)})--cycle;
\filldraw[fill=pastelgreen, draw=pastelgreen] ({2/(4*\i-1)},{-19/(4*\i-1)})--({2/(4*\i-1)},{-21/(4*\i-1)})--({2/(4*\i+1)},{-21/(4*\i+1)})--({2/(4*\i+1)},{-19/(4*\i+1)})--cycle;
\filldraw[fill=pastelgreen, draw=pastelgreen] ({-2/(4*\i-1)},{19/(4*\i-1)})--({-2/(4*\i-1)},{21/(4*\i-1)})--({-2/(4*\i+1)},{21/(4*\i+1)})--({-2/(4*\i+1)},{19/(4*\i+1)})--cycle;
\filldraw[fill=pastelgreen, draw=pastelgreen] ({-2/(4*\i-1)},{-19/(4*\i-1)})--({-2/(4*\i-1)},{-21/(4*\i-1)})--({-2/(4*\i+1)},{-21/(4*\i+1)})--({-2/(4*\i+1)},{-19/(4*\i+1)})--cycle;

\filldraw[fill=ballblue, draw=ballblue] ({2/(4*\i-3)},{17/(4*\i-3)})--({2/(4*\i-3)},{19/(4*\i-3)})--({2/(4*\i-1)},{19/(4*\i-1)})--({2/(4*\i-1)},{17/(4*\i-1)})--cycle;
\filldraw[fill=ballblue, draw=ballblue] ({2/(4*\i-3)},{-17/(4*\i-3)})--({2/(4*\i-3)},{-19/(4*\i-3)})--({2/(4*\i-1)},{-19/(4*\i-1)})--({2/(4*\i-1)},{-17/(4*\i-1)})--cycle;
\filldraw[fill=ballblue, draw=ballblue] ({-2/(4*\i-3)},{17/(4*\i-3)})--({-2/(4*\i-3)},{19/(4*\i-3)})--({-2/(4*\i-1)},{19/(4*\i-1)})--({-2/(4*\i-1)},{17/(4*\i-1)})--cycle;
\filldraw[fill=ballblue, draw=ballblue] ({-2/(4*\i-3)},{-17/(4*\i-3)})--({-2/(4*\i-3)},{-19/(4*\i-3)})--({-2/(4*\i-1)},{-19/(4*\i-1)})--({-2/(4*\i-1)},{-17/(4*\i-1)})--cycle;

\filldraw[fill=amber, draw=amber] ({2/(4*\i+1)},{19/(4*\i+1)})--({2/(4*\i+1)},{21/(4*\i+1)})--({2/(4*\i+3)},{21/(4*\i+3)})--({2/(4*\i+3)},{19/(4*\i+3)})--cycle;
\filldraw[fill=amber, draw=amber] ({2/(4*\i+1)},{-19/(4*\i+1)})--({2/(4*\i+1)},{-21/(4*\i+1)})--({2/(4*\i+3)},{-21/(4*\i+3)})--({2/(4*\i+3)},{-19/(4*\i+3)})--cycle;
\filldraw[fill=amber, draw=amber] ({-2/(4*\i+1)},{19/(4*\i+1)})--({-2/(4*\i+1)},{21/(4*\i+1)})--({-2/(4*\i+3)},{21/(4*\i+3)})--({-2/(4*\i+3)},{19/(4*\i+3)})--cycle;
\filldraw[fill=amber, draw=amber] ({-2/(4*\i+1)},{-19/(4*\i+1)})--({-2/(4*\i+1)},{-21/(4*\i+1)})--({-2/(4*\i+3)},{-21/(4*\i+3)})--({-2/(4*\i+3)},{-19/(4*\i+3)})--cycle;
}

\draw(-.5,-.5)--(.5,-.5)--(.5,.5)--(-.5,.5)--(-.5,-.5);
\draw[dotted](0,-.5)--(0,.5)(-.5,0)--(.5,0);
\draw(2/5,-.5)--(2/5,.5)(2/7,-.5)--(2/7,.5)(2/9,-.5)--(2/9,.5)(2/11,-.5)--(2/11,.5)(2/13,-.5)--(2/13,.5)(2/15,-.5)--(2/15,.5)(2/17,-.5)--(2/17,.5)(2/19,-.5)--(2/19,.5)(2/21,-.5)--(2/21,.5)(2/23,-.5)--(2/23,.5)(2/25,-.5)--(2/25,.5)(2/27,-.5)--(2/27,.5)(2/29,-.5)--(2/29,.5)(2/31,-.5)--(2/31,.5)(2/33,-.5)--(2/33,.5)(2/35,-.5)--(2/35,.5)(2/37,-.5)--(2/37,.5)(2/39,-.5)--(2/39,.5);
\draw(-2/5,-.5)--(-2/5,.5)(-2/7,-.5)--(-2/7,.5)(-2/9,-.5)--(-2/9,.5)(-2/11,-.5)--(-2/11,.5)(-2/13,-.5)--(-2/13,.5)(-2/15,-.5)--(-2/15,.5)(-2/17,-.5)--(-2/17,.5)(-2/19,-.5)--(-2/19,.5)(-2/21,-.5)--(-2/21,.5)(-2/23,-.5)--(-2/23,.5)(-2/25,-.5)--(-2/25,.5)(-2/27,-.5)--(-2/27,.5)(-2/29,-.5)--(-2/29,.5)(-2/31,-.5)--(-2/31,.5)(-2/33,-.5)--(-2/33,.5)(-2/35,-.5)--(-2/35,.5)(-2/37,-.5)--(-2/37,.5)(-2/39,-.5)--(-2/39,.5);
\draw[smooth, samples =20, domain=-.5:.5] plot(\x,{\x / 2});
\draw[smooth, samples =20, domain=-1/3:1/3] plot(\x,{3*\x / 2});
\draw[smooth, samples =20, domain=-1/5:1/5] plot(\x,{5*\x / 2});
\draw[smooth, samples =20, domain=-1/7:1/7] plot(\x,{7*\x / 2});
\draw[smooth, samples =20, domain=-1/9:1/9] plot(\x,{9*\x / 2});
\draw[smooth, samples =20, domain=-1/11:1/11] plot(\x,{11*\x / 2});
\draw[smooth, samples =20, domain=-1/13:1/13] plot(\x,{13*\x / 2});
\draw[smooth, samples =20, domain=-1/15:1/15] plot(\x,{15*\x / 2});
\draw[smooth, samples =20, domain=-1/17:1/17] plot(\x,{17*\x / 2});
\draw[smooth, samples =20, domain=-1/19:1/19] plot(\x,{19*\x / 2});

\draw[smooth, samples =20, domain=-.5:.5] plot(\x,{-\x / 2});
\draw[smooth, samples =20, domain=-1/3:1/3] plot(\x,{-3*\x / 2});
\draw[smooth, samples =20, domain=-1/5:1/5] plot(\x,{-5*\x / 2});
\draw[smooth, samples =20, domain=-1/7:1/7] plot(\x,{-7*\x / 2});
\draw[smooth, samples =20, domain=-1/9:1/9] plot(\x,{-9*\x / 2});
\draw[smooth, samples =20, domain=-1/11:1/11] plot(\x,{-11*\x / 2});
\draw[smooth, samples =20, domain=-1/13:1/13] plot(\x,{-13*\x / 2});
\draw[smooth, samples =20, domain=-1/15:1/15] plot(\x,{-15*\x / 2});
\draw[smooth, samples =20, domain=-1/17:1/17] plot(\x,{-17*\x / 2});
\draw[smooth, samples =20, domain=-1/19:1/19] plot(\x,{-19*\x / 2});

\foreach \i in {10,...,50}
{
 \draw[smooth, samples =20, domain=-1/(2*\i-1):1/(2*\i-1)] plot(\x,{(2*\i-1)*\x / 2});
 \draw[smooth, samples =20, domain=-1/(2*\i-1):1/(2*\i-1)] plot(\x,{-(2*\i-1)*\x / 2});
}

\node[below] at (.41,0) {\tiny $\frac25$};
\node[below] at (2/7+0.01,0) {\tiny $\frac27$};
\node[below] at (2/9+0.01,0) {\tiny $\frac29$};
\node[right] at (.5,.25) {\tiny $\frac{x}{2}$};
\node[right] at (.5,-.25) {\tiny $-\frac{x}{2}$};
\node[above] at (1/3,1/2) {\tiny $\frac{3x}{2}$};
\node[above] at (-1/3,1/2) {\tiny $-\frac{3x}{2}$};

\node[below] at (-.43,0) {\tiny $-\frac25$};
\node[below] at (-2/7-0.03,0) {\tiny $-\frac27$};
\node[below] at (-2/9-0.03,0) {\tiny $-\frac29$};
\node[above] at (1/5,1/2) {\tiny $\frac{5x}{2}$};
\node[above] at (-1/5,1/2) {\tiny $-\frac{5x}{2}$};

\node at (.45,.05) {\tiny $C_{2,0}$};
\node at (-.45,.05) {\tiny $C_{-2,0}$};
\node at (.45,.3) {\tiny $C_{2,1}$};
\node at (-.45,-.3) {\tiny $C_{-2,1}$};
\node at (.45,-.3) {\tiny $C_{2,-1}$};
\node at (-.43,.3) {\tiny $C_{-2,-1}$};
\node at (.33,.05) {\tiny $C_{3,0}$};
\node at (-.33,.05) {\tiny $C_{-3,0}$};
\node at (.33,.25) {\tiny $C_{3,1}$};
\node at (.33,-.25) {\tiny $C_{3,-1}$};
\node at (.315,.48) {\tiny $C_{3,2}$};
\node at (.32,-.48) {\tiny $C_{3,-2}$};
\node at (.25,.05) {\tiny $C_{4,0}$};
\node at (.25,.2) {\tiny $C_{4,1}$};
\node at (.25,.46) {\tiny $C_{4,2}$};

\foreach \i in {20,...,55}
{
    \draw( {2/(2*\i+1)}, -.5)--({2/(2*\i+1)},.5);
    \draw( {-2/(2*\i+1)}, -.5)--({-2/(2*\i+1)},.5);
 }
 
\filldraw[fill=black, draw=black] (-.02,-.5) rectangle (.02,.5);
\end{tikzpicture}
\label{f:partition2}\caption{The cylinder sets of the NIJP. The colors just serve to distinguish the different cylinder sets.}
\end{center}
\end{figure}

\vskip .2cm
Note that $C_{a,b_2, \ldots, b_d} \neq \emptyset$ if and only if $|a|\ge 2$ and $0\le |b_i| \le \lceil \frac{a}{2}\rceil$ for each $2\le i \le d$. This implies that for the linear version of the NIJP algorithm not all matrices of the form
\begin{equation}\label{q:matrixnijp}
A_0= \begin{pmatrix}
a & 0 & 0 & \cdots & 0 & 1\\
1 & 0 & 0 & \cdots & 0 & 0\\
b_2 & 1& 0 & \cdots & 0 & 0\\
b_3 & 0 & 1 & \cdots & 0 & 0\\
\vdots & \vdots & \vdots & \ddots & \vdots & \vdots\\
b_d & 0 & 0 & \cdots & 1 & 0
\end{pmatrix},
\end{equation}
are allowed. It needs to hold that $|a|\ge 2$ and $0\le |b_i| \le \lceil \frac{a}{2}\rceil$ for each $2\le i \le d$, but there are more restrictions. Below we describe these restrictions in detail for the case $d=2$ by showing that in this case $T_0$ admits a {\em Markov partition}, i.e., there exists a finite collection $\mathcal P$ of polygonal subsets of $[-\frac12,\frac12]^2$ that have the property that for any set $P \cap A$ with $P \in \mathcal P$ and $A \in \mathcal C$ there exists $P_1, \ldots, P_N \in \mathcal P$, such that $T_0 (P \cap A) = \bigcup_{i=1}^N P_i$ up to sets of zero Lebesgue measure. Figure~\ref{f:mp} shows the 20 sets that are in $\mathcal P$ for the map $T_0$.


\begin{figure}[H]
\begin{center}
\begin{tikzpicture}[scale=3]
\draw(-.5,-.5)--(.5,-.5)--(.5,.5)--(-.5,.5)--(-.5,-.5);
\draw(0,-.5)--(0,.5)(-.5,0)--(.5,0)(-.5,-.5)--(.5,.5)(-.5,.5)--(.5,-.5);
\draw (-1/2,0)--(-1/4,-1/2)--(1/4,1/2)--(1/2,0)--(1/4,-1/2)--(-1/4,1/2)--cycle;
\node[left] at (-.5,0) {\footnotesize $-\frac12$};
\node[right] at (.5,0) {\footnotesize $\frac12$};
\node[below] at (0,-.5) {\footnotesize $-\frac12$};
\node[above] at (0,.5) {\footnotesize $\frac12$};
\node at (.08,.35) {\footnotesize $E_1$};
\node at (.25,.35) {\footnotesize $F_1$};
\node at (.35,.45) {\footnotesize $G_1$};
\node at (.25,.1) {\footnotesize $H_1$};
\node at (.43,.3) {\footnotesize $J_1$};

\node at (-.08,.35) {\footnotesize $E_2$};
\node at (-.25,.35) {\footnotesize $F_2$};
\node at (-.35,.45) {\footnotesize $G_2$};
\node at (-.25,.1) {\footnotesize $H_2$};
\node at (-.43,.3) {\footnotesize $J_2$};

\node at (-.08,-.35) {\footnotesize $E_3$};
\node at (-.25,-.35) {\footnotesize $F_3$};
\node at (-.35,-.45) {\footnotesize $G_3$};
\node at (-.25,-.1) {\footnotesize $H_3$};
\node at (-.43,-.3) {\footnotesize $J_3$};

\node at (.08,-.35) {\footnotesize $E_4$};
\node at (.25,-.35) {\footnotesize $F_4$};
\node at (.35,-.45) {\footnotesize $G_4$};
\node at (.25,-.1) {\footnotesize $H_4$};
\node at (.43,-.3) {\footnotesize $J_4$};

\end{tikzpicture}
\caption{The Markov partition for the NIJP map for $d=2$.}
\label{f:mp}
\end{center}
\end{figure}
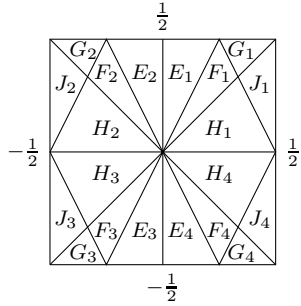

\begin{theorem}
Let $d=2$. Let $\mathcal P$ be the collection of disjoint subsets of $C$ bounded by the ten lines $x_1=0$, $x_2 = 0$, $x_2=\pm x_1$, $x_2=\pm 2x_1$, $x_2=\pm 1 \pm 2x_1$. Then $\mathcal P$ is a Markov partition for $T_0$.
\end{theorem}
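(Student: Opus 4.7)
The plan is to prove the Markov property by tracking boundaries: I will show that on each cylinder $A=C_{a,b}$ (writing $b$ for $b_2$), both the Markov boundary lines and the cylinder boundaries are mapped by $T_0$ into the union of the Markov skeleton and $\partial C$. Combined with the fact that $T_0$ restricts to a diffeomorphism on the interior of $A$, this forces $\partial(T_0(P\cap A))\subset \partial C\cup\bigcup_{P_i\in\mathcal P}\partial P_i$ and rules out any Markov line in the interior of $T_0(P\cap A)$, so $T_0(P\cap A)$ is a union of whole cells $P_i$. The symmetries $(x_1,x_2)\mapsto(x_1,-x_2)$ and $(x_1,x_2)\mapsto(-x_1,-x_2)$ intertwine $T_0$ with $(y_1,y_2)\mapsto(-y_1,y_2)$ and $(y_1,y_2)\mapsto(y_1,-y_2)$ respectively and preserve $\mathcal P$, so it suffices to treat cylinders with $a\ge 2$ and $b\ge 0$.

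The key computational tool is the line-mapping formula. Writing $(y_1,y_2)=T_0(x_1,x_2)=(x_2/x_1-b,\,1/x_1-a)$, any affine line $x_2 = m x_1 + k$ in the domain is sent to the line
\[
y_1 \;=\; (m+ka-b) + k\,y_2
\]
in the image, of slope $k$ and intercept $m+ka-b$. I would apply this to each of the ten Markov lines. For the five lines $x_2 = cx_1$ through the origin (slopes $c\in\{0,\pm1,\pm2\}$), the image is the horizontal segment $y_1 = c-b$, which is Markov only when $b=c$; on the other hand, such a line meets $C_{a,b}$ only when $c\in[b-\tfrac12,b+\tfrac12]$, and for integer $c$ this forces $b=c$, so the image is exactly $y_1=0$. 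For each offset line $x_2 = \varepsilon_1 + 2\varepsilon_2 x_1$ with $\varepsilon_1,\varepsilon_2\in\{\pm1\}$, a direct comparison in the coordinates $u=x_2/x_1,\ v=1/x_1$ shows that the line meets $C_{a,b}$ in a set of positive length only for a single value of $b$ (namely $b=a-2$ for $x_2=1-2x_1$ when $a\in\{2,3,4,5\}$ and $b=-a+2$ for $x_2=-1+2x_1$ when $a=2$, within the reduced region), and precisely at this value the image is one of the Markov lines $y_1=\pm y_2$; other potential values of $b$ yield only single-point intersections that contribute sets of measure zero.

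The remaining and most delicate step, which I expect to be the main source of bookkeeping, is the treatment of the cylinder boundaries where the constraint $|x_2|\le\tfrac12$ becomes binding. The ``slanted'' boundaries $x_2=(b\pm\tfrac12)x_1$ and the ``horizontal'' ones $x_1=2/(2a\pm1)$ are sent by $T_0$ to $y_1=\pm\tfrac12$ and $y_2=\pm\tfrac12$ respectively, so they land on $\partial C$ and pose no obstruction. The horizontal constraint $x_2=\tfrac12$ actually bounds $C_{a,b}$ precisely when $b\ge(2a-3)/4$, and its image is the slope-$\tfrac12$ segment $y_1 = (a/2-b) + y_2/2$. Here the slope-$\tfrac12$ Markov lines $y_2=\pm2y_1$ and $y_2=\pm1\pm2y_1$ become indispensable, and I would verify by a short arithmetic check that for every admissible pair $(a,b)$ the intercept $a/2-b$ lies in $\{0,\pm\tfrac12\}$: specifically, $b=a/2$ when $a$ is even and $b\in\{(a-1)/2,(a+1)/2\}$ when $a$ is odd, matching exactly the three available slope-$\tfrac12$ Markov lines. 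The boundary $x_2=-\tfrac12$ is handled by symmetry.

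Assembling these computations, the image $T_0(P\cap A)$ is bounded by the images of $\partial P\cap A$ (parts of Markov lines or $\partial C$) and of $P\cap\partial A$ (more Markov lines or $\partial C$); since the interior of $P\cap A$ contains no Markov line and $T_0|_A$ is a diffeomorphism onto its image, the interior of $T_0(P\cap A)$ contains none either. Hence $T_0(P\cap A)$ is a union of cells of $\mathcal P$, giving the Markov property.
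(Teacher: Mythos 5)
Your proposal is correct in substance but reaches the theorem by a genuinely different route than the paper. The paper uses the same symmetry reduction, but then proceeds by exhaustive enumeration: it groups the sets $C_{a,b,P}=P\cap C_{a,b}$ into $15$ types according to their images, computes each image explicitly by mapping boundary segments (the computation is displayed for $C_{2,0,H_1}$, whose image is $E_1\cup F_1\cup G_1$), and records all outcomes in Table~\ref{t:images}; the Markov property is read off from that table. You instead prove a uniform statement for all cylinders at once: the line-mapping formula $x_2=mx_1+k\mapsto y_1=(m+ka-b)+ky_2$, together with arithmetic checks showing that whenever one of the ten Markov lines or a binding edge of $C$ crosses the interior of $C_{a,b}$, its image is again a Markov line or lies in $\partial C$, and that the cylinder boundaries map into $\partial C$. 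Your route is shorter, handles the infinitely many cylinders by arithmetic in $(a,b)$ rather than by a finite case table, and makes transparent why exactly these ten lines suffice (the skeleton is ``closed'' under $T_0$); what it does not produce is the explicit transition table, which the paper needs afterwards to describe the admissible digit sequences and to set up the jump transformation in the a.c.i.m.\ discussion.

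Three repairs are needed in your write-up. First, your assertion that the argument ``rules out any Markov line in the interior of $T_0(P\cap A)$'' is false --- for type-$15$ sets the image is all of $C$, whose interior contains every Markov line --- but it is also unnecessary: the boundary containment $\partial\bigl(T_0(P\cap A)\bigr)\subset \partial C\cup\bigcup_i\partial P_i$ already suffices, because each cell of a line arrangement is convex, so if the open image met the interior of a cell $P_i$ without containing it, a segment inside the interior of $P_i$ would have to cross $\partial\bigl(T_0(P\cap A)\bigr)$, which is disjoint from the interior of $P_i$; hence every cell whose interior meets the image is contained in it, up to Lebesgue-null sets. Second, for $a=\pm 2$ the binding vertical boundary of the cylinder is the square edge $x_1=\pm\tfrac12$ rather than $x_1=2/(2a-1)$, and its image is the Markov line $y_2=0$, not a piece of $\partial C$; this case should be added to your list (it still lands in the skeleton, so nothing breaks). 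Third, the line $x_2=1-2x_1$ meets cylinders only for $a\in\{2,3,4\}$, not $\{2,3,4,5\}$; this is harmless since your claim is vacuous for $a=5$, but the bookkeeping should be exact.
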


\begin{proof}
To show that $\mathcal P$ from Figure~\ref{f:mp} is a Markov partition, we need to consider the image under $T_0$ of all sets $P \cap C_{a,b}$ for $P \in \mathcal P$ and cylinder set $C_{a,b} \in \mathcal C$. Due to symmetry, it is enough to only consider the sets in the first quadrant. We label the sets of $\mathcal P$ by $E_i,F_i,G_i,H_i,J_i$, $1 \le i \le 4$, as shown in Figure~\ref{f:mp} and use $C_{a,b,P}$ to denote the set $P \cap C_{a,b}$ with $P \in \mathcal P$ and $C_{a,b} \in \mathcal C$. Figure~\ref{f:types} shows several of these sets in the first quadrant. Based on the images of the sets $C_{a,b,P}$ we distinguish 15 types, indicated by the different colors in Figure~\ref{f:types}. Hence, to prove that $T_0$ admits a Markov partition for $d=2$ we need to compute the image under $T_0$ of each of these types of sets. We describe what happens to the set $C_{2,0,H_1}$.

\vskip .2cm
On $C_{2,0,H_1}$ the map $T_0$ is given by $T_0 (x_1,x_2) = ( \frac{x_2}{x_1}, \frac1{x_1}-2 )$. The boundary of $C_{2,0,H_1}$ is the union of three sets (we take all intervals open, because the endpoints of the intervals have no impact on the Lebesgue measure of the sets):
\[ \begin{split}
\partial_1 =\ & \left\{ (x_1,x_2) \in C \, : \, x_2=0, \, x_1 \in \left(\frac25, \frac12 \right) \right\},\\
\partial_2 =\ & \left\{ (x_1,x_2) \in C \, : \, x_1=\frac25, \, x_2 \in \left(0, \frac15 \right) \right\},\\
\partial_3 =\ & \left\{ (x_1, x_2) \in C\, : \, x_1 \in \left(\frac25, \frac12 \right), \, x_2 = 1-2x_1 \right\}.
\end{split}\]
Then
\[ \begin{split}
T_0(\partial_1) =\ & \left\{ (x_1,x_2) \in C \, : \, x_1=0, \, x_2 \in \left(0, \frac12 \right) \right\},\\
T_0(\partial_2) =\ & \left\{ (x_1,x_2) \in C \, : \, x_2=\frac12, \, x_1 \in \left(0, \frac12 \right) \right\},\\
T_0(\partial_3) =\ & \left\{ (x_1, x_2) \in C\, : \, x_2=x_1, \, x_1 \in \left(0, \frac12 \right) \right\}.
\end{split}\]
From this we can conclude that $T_0 (C_{2,0,H_1}) = E_1 \cup F_1 \cup G_1$. A similar computation can be done for each of the sets $C_{a,b,P}$. Table~\ref{t:images} lists the images of each type of set $C_{a,b,P}$ in the first quadrant. See also Figure~\ref{f:images}. By symmetry, similar results are obtained for sets $C_{a,b,P}$ in the other quadrants, from which we can deduce that the collection $\mathcal P$ is a Markov partition for $T_0$ with $d=2$.
\end{proof}

{\footnotesize
\begin{table}[h]
    \centering
    \begin{tabular}{|l|l|l|}
\hline
& $C_{a,b,P}$ & $T(C_{a,b,P})$\\
\hline
type 1 & $C_{2,0,H_1}$, $C_{3,1,F_1}$ & $E_1 \cup F_1 \cup G_1$  \\
\hline
type 2 & $C_{2,0,J_1}$ & $H_1 \cup J_1$\\
\hline
type 3 & $C_{2,1,J_1}$ & $E_2 \cup F_2 \cup G_2 \cup H_2 \cup J_2$\\
\hline
type 4 & $C_{2,1,G_1}$, $C_{4,2,E_1}$ & $E_1$\\
\hline
type 5 & $C_{a,0,H_1}$ for $a \ge 3$, $C_{a,1,F_1}$ for $a \ge 4$, & $\bigcup_{i=1,4} (E_i \cup F_i \cup G_i \cup H_i \cup J_i)$\\
& $C_{a,2,E_1}$ for $a \ge 6$ &\\
\hline
type 6 & $C_{3,1,H_1}$, $C_{4,2,F_1}$ & $E_2\cup F_2 \cup G_2 \cup H_2 \cup J_2 \cup H_3 \cup J_3$\\
\hline
type 7 & $C_{3,1,J_1}$ & $E_3 \cup F_3 \cup G_3$\\
\hline
type 8 & $C_{3,1,G_1}$ & $H_1 \cup J_1 \cup E_4 \cup F_4 \cup H_4$\\
\hline
type 9 & $C_{3,2,G_1}$, $C_{2k-1,k,E_1}$ for $k \ge 3$ & $G_2 \cup J_2$\\
\hline
type 10 & $C_{a,1,H_1}$ for $a \ge 4$, $C_{a,2,F_1}$ for $a \ge 5$ & $\bigcup_{i=2,3} (E_i \cup F_i \cup G_i \cup H_i \cup J_i)$\\
\hline
type 11 & $C_{4,2,G_1}$ & $F_3 \cup G_3$\\
\hline
type 12 & $C_{5,2,E_1}$ & $E_1 \cup F_1 \cup G_1 \cup H_1 \cup J_1 \cup E_4 \cup F_4 \cup H_4$\\
\hline
type 13 & $C_{2k,k,E_1}$ for $k \ge 3$ & $E_1 \cup E_2 \cup F_2 \cup G_2 \cup H_2 \cup J_2 \cup F_3 \cup G_3 \cup H_3 \cup J_3$\\
\hline
type 14 & $C_{2k+1,k,E_1}$ for $k \ge 3$ & $C \setminus (G_4 \cup J_4)$\\
\hline
type 15 & $C_{a,b,E_1}$ for $b \ge 3$ and $a \ge 2b+2$ & $C$\\
\hline
    \end{tabular}
    \vspace{.2cm}
    \caption{Images of the sets $C_{a,b,P}$ for $a,b\ge 0$ and $P \in \mathcal P$ under $T_0$.}
    \label{t:images}
\end{table}
}

\begin{figure}[H]
\begin{center}
\begin{tikzpicture}[scale=20]

\filldraw[fill=awesome, draw=awesome] (1/2,1/2)--(2/5,2/5)--(2/5,1/5)--(1/2,1/4)--(1/2,1/2);
\filldraw[fill=turquoise, draw=turquoise] (1/2,1/2)--(2/5,2/5)--(2/5,1/2)--(1/2,1/2);
\filldraw[fill=turquoise, draw=turquoise] (1/4,1/2)--(2/9,1/2)--(2/9,4/9)--cycle;


\filldraw[fill=lighttaupe, draw=lighttaupe] (2/9,4/9)--(2/9,1/2)--(1/5,1/2)--(2/11,5/11)--(2/11,4/11)--cycle;

\filldraw[fill=ballblue, draw=ballblue] (2/5,1/5)--(2/5,2/5)--(1/3,1/3)--cycle;
\filldraw[fill=tangelo, draw=tangelo] (2/7,2/7)--(2/7,1/7)--(2/5,1/5)--(1/3,1/3)--cycle;
\filldraw[fill=tangelo, draw=tangelo] (2/9,1/3)--(2/9,4/9)--(1/4,1/2)--(2/7,3/7)--cycle;

\filldraw[fill=mauvelous, draw=mauvelous] (2/5,1/2)--(1/3,1/2)--(2/7,3/7)--(1/3,1/3)--(2/5,2/5)--cycle;


\filldraw[fill=jazzberryjam, draw=jazzberryjam] (1/3,1/2)--(2/7,1/2)--(2/7,3/7);

\filldraw[fill=jazzberryjam, draw=jazzberryjam] (1/5,1/2)--(2/11,1/2)--(2/11,5/11)--cycle;
\filldraw[fill=jazzberryjam, draw=jazzberryjam] (1/7,1/2)--(2/15,1/2)--(2/15,7/15)--cycle;
\filldraw[fill=jazzberryjam, draw=jazzberryjam] (1/9,1/2)--(2/19,1/2)--(2/19,9/19)--cycle;
\filldraw[fill=jazzberryjam, draw=jazzberryjam] (1/11,1/2)--(2/23,1/2)--(2/23,11/23)--cycle;
\filldraw[fill=jazzberryjam, draw=jazzberryjam] (1/13,1/2)--(2/27,1/2)--(2/27,13/27)--cycle;
\filldraw[fill=jazzberryjam, draw=jazzberryjam] (1/15,1/2)--(2/31,1/2)--(2/31,15/31)--cycle;

\filldraw[fill=lava, draw=lava] (0,0)--(2/5,0)--(2/5,1/5)--cycle;
\filldraw[fill=lava, draw=lava] (0,0)--(2/7,2/7)--(2/7,3/7)--cycle;
\filldraw[fill=lava, draw=lava] (0,0)--(2/11,4/11)--(2/11,5/11)--cycle;

\filldraw[fill=pastelgreen, draw=pastelgreen] (1/2,1/4)--(1/2,0)--(2/5,1/5)--(1/2,1/4);
\filldraw[fill=teagreen, draw=teagreen] (1/2,0)--(2/5,0)--(2/5,1/5)--cycle;
\filldraw[fill=teagreen, draw=teagreen] (2/7,2/7)--(2/7,3/7)--(1/3,1/3)--cycle;

\filldraw[fill=amber, draw=amber] (0,0)--(2/7,1/7)--(2/7,2/7)--cycle;
\filldraw[fill=amber, draw=amber] (0,0)--(2/9,1/3)--(2/9,4/9)--cycle;

\filldraw[fill=lavenderpurple, draw=lavenderpurple] (2/7,1/2)--(1/4,1/2)--(2/7,3/7)--cycle;

\filldraw[fill=bleudefrance, draw=bleudefrance] (0,0)--(2/15,5/15)--(2/15,7/15)--(2/19,7/19)--(2/19,9/19)--(2/23,9/23)--(2/23,11/23)--(2/27,11/27)--(2/27,13/27)--(2/31,13/31)--(2/31,15/31)--(2/35,15/35)--(2/35,17/35)--(2/39,17/39)--(2/39,19/39)--(2/43,19/43)--(2/43,21/43)--(2/47,21/47)--(2/47,23/47)--(2/51,23/51)--(2/51,25/51)--(2/55,25/55)--(2/55,27/55)--(2/59,27/59)--(2/59,29/59)--(2/63, 29/63)--(2/63,31/63)--(2/67,31/67)--(2/67,33/67)--(2/71,33/71)--(2/71,35/71)--cycle;

\filldraw[fill=pinegreen, draw=pinegreen] (2/15,5/15)--(2/13,5/13)--(2/13,1/2)--(1/7,1/2)--(2/15,7/15)--cycle;
\filldraw[fill=pinegreen, draw=pinegreen] (2/19,7/19)--(2/17,7/17)--(2/17,1/2)--(1/9,1/2)--(2/19,9/19)--cycle;
\filldraw[fill=pinegreen, draw=pinegreen] (2/23,9/23)--(2/21,9/21)--(2/21,1/2)--(1/11,1/2)--(2/23,11/23)--cycle;
\filldraw[fill=pinegreen, draw=pinegreen] (2/27,11/27)--(2/25,11/25)--(2/25,1/2)--(1/13,1/2)--(2/27,13/27)--cycle;
\filldraw[fill=pinegreen, draw=pinegreen] (2/31,13/31)--(2/29,13/29)--(2/29,1/2)--(1/15,1/2)--(2/31,15/31)--cycle;
\filldraw[fill=pinegreen, draw=pinegreen] (2/35,15/35)--(2/33,15/33)--(2/33,1/2)--(1/17,1/2)--(2/35,17/35)--cycle;
\filldraw[fill=pinegreen, draw=pinegreen] (2/39,17/39)--(2/37,17/37)--(2/37,1/2)--(1/19,1/2)--(2/39,19/39)--cycle;
\filldraw[fill=pinegreen, draw=pinegreen] (2/43,19/43)--(2/41,19/41)--(2/41,1/2)--(1/21,1/2)--(2/43,21/43)--cycle;
\filldraw[fill=pinegreen, draw=pinegreen] (2/47,21/47)--(2/45,21/45)--(2/45,1/2)--(1/23,1/2)--(2/47,23/47)--cycle;
\filldraw[fill=pinegreen, draw=pinegreen] (2/51,23/51)--(2/49,23/49)--(2/49,1/2)--(1/25,1/2)--(2/51,25/51)--cycle;
\filldraw[fill=pinegreen, draw=pinegreen] (2/55,25/55)--(2/53,25/53)--(2/53,1/2)--(1/27,1/2)--(2/55,27/55)--cycle;
\filldraw[fill=pinegreen, draw=pinegreen] (2/59,27/59)--(2/57,27/57)--(2/57,1/2)--(1/29,1/2)--(2/59,29/59)--cycle;

\filldraw[fill=bananamania, draw=bananamania] (2/13,5/13)--(2/11,5/11)--(2/11,1/2)--(2/13,1/2)--cycle;
\filldraw[fill=bananamania, draw=bananamania] (2/17,7/17)--(2/15,7/15)--(2/15,1/2)--(2/17,1/2)--cycle;
\filldraw[fill=bananamania, draw=bananamania] (2/21,9/21)--(2/19,9/19)--(2/19,1/2)--(2/21,1/2)--cycle;
\filldraw[fill=bananamania, draw=bananamania] (2/25,11/25)--(2/23,11/23)--(2/23,1/2)--(2/25,1/2)--cycle;
\filldraw[fill=bananamania, draw=bananamania] (2/29,13/29)--(2/27,13/27)--(2/27,1/2)--(2/29,1/2)--cycle;
\filldraw[fill=bananamania, draw=bananamania] (2/33,15/33)--(2/31,15/31)--(2/31,1/2)--(2/33,1/2)--cycle;
\filldraw[fill=bananamania, draw=bananamania] (2/37,17/37)--(2/35,17/35)--(2/35,1/2)--(2/37,1/2)--cycle;
\filldraw[fill=bananamania, draw=bananamania] (2/41,19/41)--(2/39,19/39)--(2/39,1/2)--(2/41,1/2)--cycle;
\filldraw[fill=bananamania, draw=bananamania] (2/45,21/45)--(2/43,21/43)--(2/43,1/2)--(2/45,1/2)--cycle;
\filldraw[fill=bananamania, draw=bananamania] (2/49,23/49)--(2/47,23/47)--(2/47,1/2)--(2/49,1/2)--cycle;
\filldraw[fill=bananamania, draw=bananamania] (2/53,25/53)--(2/51,25/51)--(2/51,1/2)--(2/53,1/2)--cycle;
\filldraw[fill=bananamania, draw=bananamania] (2/57,27/57)--(2/55,27/55)--(2/55,1/2)--(2/57,1/2)--cycle;

\draw(2/5,0)--(2/5,.5)(2/7,0)--(2/7,.5)(2/9,0)--(2/9,.5)(2/11,0)--(2/11,.5)(2/13,0)--(2/13,.5)(2/15,0)--(2/15,.5)(2/17,0)--(2/17,.5)(2/19,0)--(2/19,.5)(2/21,0)--(2/21,.5)(2/23,0)--(2/23,.5)(2/25,0)--(2/25,.5)(2/27,0)--(2/27,.5)(2/29,0)--(2/29,.5)(2/31,0)--(2/31,.5)(2/33,0)--(2/33,.5)(2/35,0)--(2/35,.5)(2/37,0)--(2/37,.5)(2/39,0)--(2/39,.5);

\draw[smooth, samples =20, domain=0:.5] plot(\x,{\x / 2});
\draw[smooth, samples =20, domain=0:1/3] plot(\x,{3*\x / 2});
\draw[smooth, samples =20, domain=0:1/5] plot(\x,{5*\x / 2});
\draw[smooth, samples =20, domain=0:1/7] plot(\x,{7*\x / 2});
\draw[smooth, samples =20, domain=0:1/9] plot(\x,{9*\x / 2});
\draw[smooth, samples =20, domain=0:1/11] plot(\x,{11*\x / 2});
\draw[smooth, samples =20, domain=0:1/13] plot(\x,{13*\x / 2});
\draw[smooth, samples =20, domain=0:1/15] plot(\x,{15*\x / 2});
\draw[smooth, samples =20, domain=0:1/17] plot(\x,{17*\x / 2});
\draw[smooth, samples =20, domain=0:1/19] plot(\x,{19*\x / 2});

\foreach \i in {10,...,50}
{
 \draw[smooth, samples =20, domain=0:1/(2*\i-1)] plot(\x,{(2*\i-1)*\x / 2});
}

\foreach \i in {20,...,55}
{
    \draw( {2/(2*\i+1)}, 0)--({2/(2*\i+1)},.5);
 }

\filldraw[fill=black, draw=black] (0,0) rectangle (.02,.5);
 \draw[thick,spirodiscoball] (0,0)--(1/4,1/2)(1/4,1/2)--(1/2,0)(0,0)--(1/2,1/2);
\draw[thick, spirodiscoball](0,0)--(.5,0)--(.5,.5)--(0,.5)--(0,0);
 
 \node at (.465,.15) {\tiny $C_{2,0,J_1}$};
  \node at (.435,.03) {\tiny $C_{2,0,H_1}$};
\node at (.45,.3) {\tiny $C_{2,1,J_1}$};
\node at (.435,.485) {\tiny $C_{2,1,G_1}$};

\node at (.33,.1) {\tiny $C_{3,0,H_1}$};
\node at (.37,.32) {\tiny $C_{3,1,J_1}$};
\node at (.35,.4) {\tiny $C_{3,1,G_1}$};
\node at (.33,.25) {\tiny $C_{3,1,H_1}$};

\node[rotate=55] at (.305,.483) {\tiny $C_{3,2,G_1}$};

\node[rotate=90] at (.165,.46) {\tiny $C_{6,3,E_1}$};
\node[rotate=55] at (.2,.44) {\tiny $C_{5,2,E_1}$};

\node at (.255,.2) {\tiny $C_{4,1,H_1}$};
\node[rotate=-55] at (.27,.475) {\tiny $C_{4,2,G_1}$};
 \node[rotate=90] at (.145,.42) {\tiny $C_{7,3,E_1}$};
\node[rotate=90] at (.125,.38) {\tiny $C_{8,3,E_1}$};

\node[below] at (0,0) {\tiny $0$};
\node[below] at (1/2,0) {\tiny $\frac12$};
\node[left] at (0,1/2) {\tiny $\frac12$};

\end{tikzpicture}
\caption{The black lines indicate the boundaries of cylinder sets. The blue lines indicate the boundaries of the Markov partition elements. The colors of the polygons indicate the different types of sets $C_{a,b,P}$ according to their images under the NIJP map. Set with the same color have the same type.}
\label{f:types}
\end{center}
\end{figure}
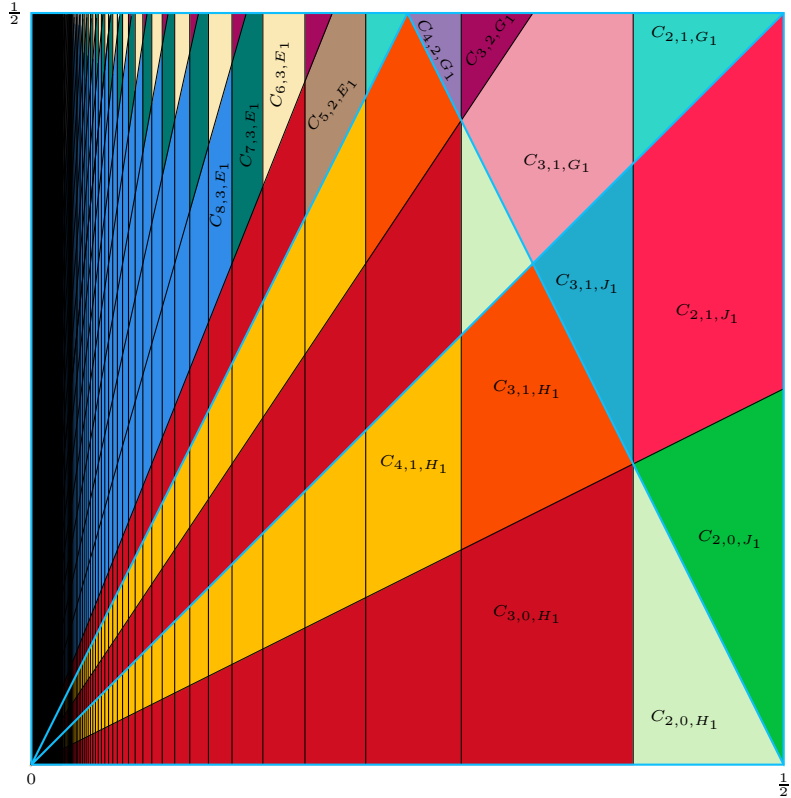

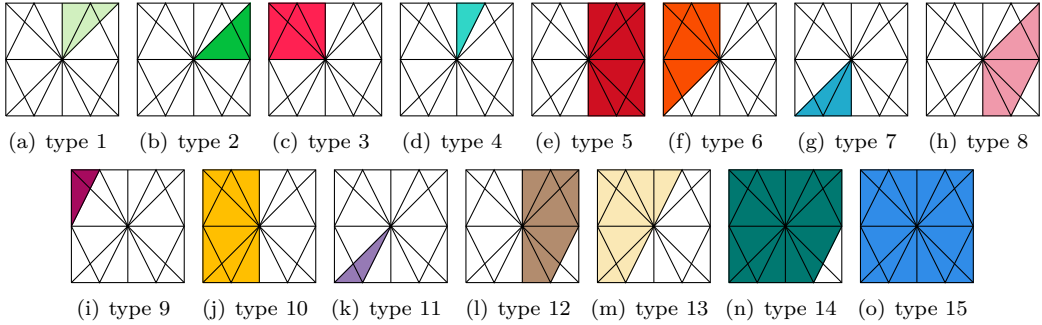
\begin{figure}[H]
\begin{center}
\subfigure[type 1]{
\begin{tikzpicture}[scale=1.5]

\filldraw[fill=teagreen, draw=teagreen] (0,0)--(0,1/2)--(1/2,1/2)--cycle;

\draw(-.5,-.5)--(.5,-.5)--(.5,.5)--(-.5,.5)--(-.5,-.5);
\draw(0,-.5)--(0,.5)(-.5,0)--(.5,0)(-.5,-.5)--(.5,.5)(-.5,.5)--(.5,-.5);
\draw (-1/2,0)--(-1/4,-1/2)--(1/4,1/2)--(1/2,0)--(1/4,-1/2)--(-1/4,1/2)--cycle;

\end{tikzpicture}}
\subfigure[type 2]{
\begin{tikzpicture}[scale=1.5]

\filldraw[fill=pastelgreen, draw=pastelgreen] (0,0)--(1/2,1/2)--(1/2,0)--cycle;

\draw(-.5,-.5)--(.5,-.5)--(.5,.5)--(-.5,.5)--(-.5,-.5);
\draw(0,-.5)--(0,.5)(-.5,0)--(.5,0)(-.5,-.5)--(.5,.5)(-.5,.5)--(.5,-.5);
\draw (-1/2,0)--(-1/4,-1/2)--(1/4,1/2)--(1/2,0)--(1/4,-1/2)--(-1/4,1/2)--cycle;

\end{tikzpicture}}
\subfigure[type 3]{
\begin{tikzpicture}[scale=1.5]

\filldraw[fill=awesome, draw=awesome] (-1/2,0)--(0,0)--(0,1/2)--(-1/2,1/2)--cycle;

\draw(-.5,-.5)--(.5,-.5)--(.5,.5)--(-.5,.5)--(-.5,-.5);
\draw(0,-.5)--(0,.5)(-.5,0)--(.5,0)(-.5,-.5)--(.5,.5)(-.5,.5)--(.5,-.5);
\draw (-1/2,0)--(-1/4,-1/2)--(1/4,1/2)--(1/2,0)--(1/4,-1/2)--(-1/4,1/2)--cycle;

\end{tikzpicture}}
\subfigure[type 4]{
\begin{tikzpicture}[scale=1.5]

\filldraw[fill=turquoise, draw=turquoise] (0,0)--(1/4,1/2)--(0,1/2)--cycle;

\draw(-.5,-.5)--(.5,-.5)--(.5,.5)--(-.5,.5)--(-.5,-.5);
\draw(0,-.5)--(0,.5)(-.5,0)--(.5,0)(-.5,-.5)--(.5,.5)(-.5,.5)--(.5,-.5);
\draw (-1/2,0)--(-1/4,-1/2)--(1/4,1/2)--(1/2,0)--(1/4,-1/2)--(-1/4,1/2)--cycle;

\end{tikzpicture}}
\subfigure[type 5]{
\begin{tikzpicture}[scale=1.5]

\filldraw[fill=lava, draw=lava] (0,-1/2)--(1/2,-1/2)--(1/2,1/2)--(0,1/2)--cycle;

\draw(-.5,-.5)--(.5,-.5)--(.5,.5)--(-.5,.5)--(-.5,-.5);
\draw(0,-.5)--(0,.5)(-.5,0)--(.5,0)(-.5,-.5)--(.5,.5)(-.5,.5)--(.5,-.5);
\draw (-1/2,0)--(-1/4,-1/2)--(1/4,1/2)--(1/2,0)--(1/4,-1/2)--(-1/4,1/2)--cycle;

\end{tikzpicture}}
\subfigure[type 6]{
\begin{tikzpicture}[scale=1.5]

\filldraw[fill=tangelo, draw=tangelo] (-1/2,-1/2)--(0,0)--(0,1/2)--(-1/2,1/2)--cycle;

\draw(-.5,-.5)--(.5,-.5)--(.5,.5)--(-.5,.5)--(-.5,-.5);
\draw(0,-.5)--(0,.5)(-.5,0)--(.5,0)(-.5,-.5)--(.5,.5)(-.5,.5)--(.5,-.5);
\draw (-1/2,0)--(-1/4,-1/2)--(1/4,1/2)--(1/2,0)--(1/4,-1/2)--(-1/4,1/2)--cycle;

\end{tikzpicture}}
\subfigure[type 7]{
\begin{tikzpicture}[scale=1.5]

\filldraw[fill=ballblue, draw=ballblue] (-1/2,-1/2)--(0,-1/2)--(0,0)--cycle;

\draw(-.5,-.5)--(.5,-.5)--(.5,.5)--(-.5,.5)--(-.5,-.5);
\draw(0,-.5)--(0,.5)(-.5,0)--(.5,0)(-.5,-.5)--(.5,.5)(-.5,.5)--(.5,-.5);
\draw (-1/2,0)--(-1/4,-1/2)--(1/4,1/2)--(1/2,0)--(1/4,-1/2)--(-1/4,1/2)--cycle;

\end{tikzpicture}}
\subfigure[type 8]{
\begin{tikzpicture}[scale=1.5]

\filldraw[fill=mauvelous, draw=mauvelous] (0,-1/2)--(1/4,-1/2)--(1/2,0)--(1/2,1/2)--(0,0)--cycle;

\draw(-.5,-.5)--(.5,-.5)--(.5,.5)--(-.5,.5)--(-.5,-.5);
\draw(0,-.5)--(0,.5)(-.5,0)--(.5,0)(-.5,-.5)--(.5,.5)(-.5,.5)--(.5,-.5);
\draw (-1/2,0)--(-1/4,-1/2)--(1/4,1/2)--(1/2,0)--(1/4,-1/2)--(-1/4,1/2)--cycle;

\end{tikzpicture}}
\subfigure[type 9]{
\begin{tikzpicture}[scale=1.5]

\filldraw[fill=jazzberryjam, draw=jazzberryjam] (-1/2,0)--(-1/4,1/2)--(-1/2,1/2)--cycle;

\draw(-.5,-.5)--(.5,-.5)--(.5,.5)--(-.5,.5)--(-.5,-.5);
\draw(0,-.5)--(0,.5)(-.5,0)--(.5,0)(-.5,-.5)--(.5,.5)(-.5,.5)--(.5,-.5);
\draw (-1/2,0)--(-1/4,-1/2)--(1/4,1/2)--(1/2,0)--(1/4,-1/2)--(-1/4,1/2)--cycle;

\end{tikzpicture}}
\subfigure[type 10]{
\begin{tikzpicture}[scale=1.5]

\filldraw[fill=amber, draw=amber] (-1/2,-1/2)--(0,-1/2)--(0,1/2)--(-1/2,1/2)--cycle;

\draw(-.5,-.5)--(.5,-.5)--(.5,.5)--(-.5,.5)--(-.5,-.5);
\draw(0,-.5)--(0,.5)(-.5,0)--(.5,0)(-.5,-.5)--(.5,.5)(-.5,.5)--(.5,-.5);
\draw (-1/2,0)--(-1/4,-1/2)--(1/4,1/2)--(1/2,0)--(1/4,-1/2)--(-1/4,1/2)--cycle;

\end{tikzpicture}}
\subfigure[type 11]{
\begin{tikzpicture}[scale=1.5]

\filldraw[fill=lavenderpurple, draw=lavenderpurple] (-1/2,-1/2)--(-1/4,-1/2)--(0,0)--cycle;

\draw(-.5,-.5)--(.5,-.5)--(.5,.5)--(-.5,.5)--(-.5,-.5);
\draw(0,-.5)--(0,.5)(-.5,0)--(.5,0)(-.5,-.5)--(.5,.5)(-.5,.5)--(.5,-.5);
\draw (-1/2,0)--(-1/4,-1/2)--(1/4,1/2)--(1/2,0)--(1/4,-1/2)--(-1/4,1/2)--cycle;

\end{tikzpicture}}
\subfigure[type 12]{
\begin{tikzpicture}[scale=1.5]

\filldraw[fill=lighttaupe, draw=lighttaupe] (0,-1/2)--(1/4,-1/2)--(1/2,0)--(1/2,1/2)--(0,1/2)--cycle;

\draw(-.5,-.5)--(.5,-.5)--(.5,.5)--(-.5,.5)--(-.5,-.5);
\draw(0,-.5)--(0,.5)(-.5,0)--(.5,0)(-.5,-.5)--(.5,.5)(-.5,.5)--(.5,-.5);
\draw (-1/2,0)--(-1/4,-1/2)--(1/4,1/2)--(1/2,0)--(1/4,-1/2)--(-1/4,1/2)--cycle;

\end{tikzpicture}}
\subfigure[type 13]{
\begin{tikzpicture}[scale=1.5]

\filldraw[fill=bananamania, draw=bananamania] (-1/2,-1/2)--(-1/4,-1/2)--(1/4,1/2)--(-1/2,1/2)--cycle;

\draw(-.5,-.5)--(.5,-.5)--(.5,.5)--(-.5,.5)--(-.5,-.5);
\draw(0,-.5)--(0,.5)(-.5,0)--(.5,0)(-.5,-.5)--(.5,.5)(-.5,.5)--(.5,-.5);
\draw (-1/2,0)--(-1/4,-1/2)--(1/4,1/2)--(1/2,0)--(1/4,-1/2)--(-1/4,1/2)--cycle;

\end{tikzpicture}}
\subfigure[type 14]{
\begin{tikzpicture}[scale=1.5]

\filldraw[fill=pinegreen, draw=pinegreen] (-1/2,-1/2)--(1/4,-1/2)--(1/2,0)--(1/2,1/2)--(-1/2,1/2)--cycle;

\draw(-.5,-.5)--(.5,-.5)--(.5,.5)--(-.5,.5)--(-.5,-.5);
\draw(0,-.5)--(0,.5)(-.5,0)--(.5,0)(-.5,-.5)--(.5,.5)(-.5,.5)--(.5,-.5);
\draw (-1/2,0)--(-1/4,-1/2)--(1/4,1/2)--(1/2,0)--(1/4,-1/2)--(-1/4,1/2)--cycle;

\end{tikzpicture}}
\subfigure[type 15]{
\begin{tikzpicture}[scale=1.5]

\filldraw[fill=bleudefrance, draw=bleudefrance] (-1/2,-1/2)--(1/2,-1/2)--(1/2,1/2)--(-1/2,1/2)--cycle;

\draw(-.5,-.5)--(.5,-.5)--(.5,.5)--(-.5,.5)--(-.5,-.5);
\draw(0,-.5)--(0,.5)(-.5,0)--(.5,0)(-.5,-.5)--(.5,.5)(-.5,.5)--(.5,-.5);
\draw (-1/2,0)--(-1/4,-1/2)--(1/4,1/2)--(1/2,0)--(1/4,-1/2)--(-1/4,1/2)--cycle;

\end{tikzpicture}}
\caption{The images of the sets from Figure~\ref{f:types}.}
\label{f:images}
\end{center}
\end{figure}

From Table~\ref{t:images} we can deduce the restrictions that apply to applications of the matrices from \eqref{q:matrixnijp}. For example, from the first two lines we read that the digit $(2,0)$ can only be followed by a digit $(a,b)$ with $a,b\ge 0$. Some restrictions are more complicated to describe and carry further. For example, the digit $(3,2)$ can be followed by the digits $(-2,0)$, $(-2,-1)$, $(-3,-1)$, $(-3,-2)$ and $(-4,-2)$. The digit $(-2,0)$ can in principle be followed by any digit $(a,b)$ with $a,b \le 0$, but if one sees $(3,2)$ followed by $(-2,0)$ then this can only be followed by those digit $(a,b)$ with $a,b \le 0$ such that $C_{a,b} \in H_3 \cup J_3$. Knowing which sequences of digits are allowed, tells us which matrix products involving matrices of the form \eqref{q:matrixnijp} we have to analyze in order to obtaining numerical information about the approximation properties of the nearest integer Jacobi--Perron algorithm. Giving a fuller description of the allowed digit sequences from the results in Table~\ref{t:images} would be a first step in this direction.

\vskip .2cm
Having a Markov partition can also help in finding invariant measures for the dynamical system given by $T_0$. Here we describe a different approach that might lead to an invariant measure that is absolutely continuous with respect to the Lebesgue measure. To prove the existence of an a.c.i.m, we follow the book of Schweiger \cite[Chapter 4]{SCHWEIGER}, where he analyzed the ergodic properties of the Jacobi--Perron algorithm (see also \cite{mayer,Broise:01}). The analysis is similar, we will be brief. First notice that only cylinders of type $15$ are full, i.e. $T_0([a,b)])=C$ up to sets of zero Lebesgue measure. All other types are non-full. However, after at most three iterations, any non-full cylinder is mapped to a region containing at least a fixed positive proportion, say $q$, of full cylinders. This implies that every non-full cylinder can be written as a countable union of disjoint full cylinders of higher rank (i.e. where more digits are specified), so that the collection of full cylinders generate the Borel $\sigma$-algebra. This then allows one to define a jump transformation with full cylinders and satisfying the conditions of R\'enyi \cite{Renyi} (see also Theorem 8 in  \cite{SCHWEIGER}). From this one concludes that the jump transformation admits an absolutely continuous invariant ergodic measure. Then, Theorem 11 (see also Theorem 18) of  \cite{SCHWEIGER} implies that $T_0$ admits a finite absolutely continuous invariant ergodic measure.


\subsection{Experimental data}\label{subsec:expnijpa}
We now provide some experimental data due to Steiner \cite{Steiner:pc}
the Jacobi--Perron and the nearest integer Jacobi--Perron algorithms
indicating a better behaviour for the  nearest integer Jacobi--Perron algorithm in terms of Lyapunov exponents than  for its   usual  version.

For the nearest integer Jacobi--Perron algorithm (on the left) and  the  usual Jacobi--Perron algorithm (on the right), one gets the following
experimental data.

\begin{center}
\begin{tabular}{c|c|c||c|c|c}
$d$ & $\lambda_1$ & $\lambda_2$  & $1-\frac{\lambda_2}{\lambda_1}$  \\ \hline
d=2 & 1.72241 &-0.691444 &1.40144\\
d=3 & 1.72394 &-0.388217& 1.22520\\
d=4 & 1.72400& -0.24779 &1.14372\\
d=5 & 1.72408 &-0.16873 &1.09786\\
d=6 & 1.72417& -0.11892 &1.06897\\
d=7 & 1.72413 &-0.08522 &1.049430\\
d=8 & 1.72417 &-0.06122 &1.03551\\
d=9 & 1.72409 &-0.04347 &1.02522\\
d=10 &1.72414& -0.02995& 1.01737\\
d=11 &1.72413 &-0.01939& 1.01125\\
d=12& 1.72414 &-0.01102 & 1.00640\\
d=13 &1.72409& -0.00425 & 1.00247\\
d=14 &1.72414 & 0.001304 & 0.99924
\end{tabular}
\quad 
\begin{tabular}{c|c|c||c|c|c}
$d$ & $\lambda_1 $&$ \lambda_2$ &$1-\frac{\lambda_2}{\lambda_1}$  \\ \hline
d=2  &1.20052 &-0.448404 &1.37351\\
d=3  &1.18560 &-0.227877 &1.19220\\
d=4  &1.17295 &-0.13064& 1.11138\\
d=5 & 1.16579 &-0.07882& 1.067614\\
d=6 & 1.16224& -0.04798 &1.041279\\
d=7 & 1.16068 &-0.028202 &1.02430\\
d=8 & 1.15992 &-0.014708& 1.01268\\
d=9 & 1.15956 &-0.00505 &1.004358\\
d=10 &1.159476 & 0.00217& 0.99813\\
d=11 &1.159360& 0.00776& 0.993308\\
d=12 &1.159364 & 0.01221 &0.98946\\
d=13& 1.159401& 0.01586 &0.986320\\
d=14 & 1.15930 & 0.01889 &0.983705
\end{tabular}
\end{center}

\bibliographystyle{amsalpha}
\bibliography{Wine}
\end{document}